\newcommand{\dd}{\,\mathrm{d}}
\newcommand{\e}{\mathrm{e}}
\renewcommand{\MR}{\mathrm{MR}}
\newcommand{\UH}{\mathscr{L}_2(U,H)}
\newcommand{\one}{\mathbbm{1}}
\newcommand{\<}{\langle}
\newcommand{\?}{\rangle}
\newcommand{\nn}{|\!|\!|}
\newcommand{\into}{\hookrightarrow}
\DeclareMathOperator*{\esssup}{\textup{ess\,sup}}
\newcommand{\loc}{\mathrm{loc}}
\newcommand{\ceqq}{\coloneqq}
\newcommand{\eqqc}{\eqqcolon}
\newcommand{\col}{\colon}
\renewcommand{\O}{\mathcal{O}} 
\newcommand{\bphieps}{\Psi^{\eps}}
\newcommand{\pr}{\mathbb{P}}
\newcommand{\Ls}{\mathbb{L}}
\newcommand{\Hs}{\mathbb{H}}
\newcommand{\Tor}{\mathbb{T}}
\renewcommand{\div}{\mathrm{div}}
\renewcommand{\Tor}{\mathbb{T}^2}
\theoremstyle{plain}
\newtheorem{theorem}{Theorem}[section]
\theoremstyle{remark}
\newtheorem{remark}[theorem]{Remark}
\newtheorem{example}[theorem]{Example}
\theoremstyle{plain}
\newtheorem{corollary}[theorem]{Corollary}
\newtheorem{lemma}[theorem]{Lemma}
\newtheorem{proposition}[theorem]{Proposition}
\newtheorem{definition}[theorem]{Definition}
\newtheorem{assumption}[theorem]{Assumption}
\numberwithin{equation}{section}
\def\N{{\mathbb N}}
\def\R{{\mathbb R}}
\newcommand{\F}{\mathcal{F}}
\newcommand{\G}{\mathcal{G}}
\renewcommand{\P}{\mathbb{P}}
\newcommand{\om}{\omega}
\newcommand{\Om}{\Omega}
\newcommand{\eps}{\varepsilon}
\begin{document}

\author[Esm\'ee Theewis]{Esm\'ee Theewis}
\address[E. Theewis]{Delft Institute of Applied Mathematics\\
Delft University of Technology \\ P.O. Box 5031\\ 2600 GA Delft\\The
Netherlands}
\email{e.s.theewis@tudelft.nl}

\thanks{The author is supported by the VICI subsidy VI.C.212.027 of the Netherlands Organisation for Scientific Research (NWO)}

\date{\today}

\title[Large deviations beyond the coercive case]{Large deviations for stochastic evolution equations beyond the coercive case}  

\keywords{Large deviation principle, stochastic partial differential equations, stochastic evolution equations, quasilinear, semilinear}

\subjclass[2020]{Primary: 60H15, Secondary: 60F10, 35K90, 35R60, 47J35, 35K58, 35K59}

\begin{abstract} 
We prove the small-noise large deviation principle (LDP) for stochastic evolution equations in an $L^2$-setting. 
As the coefficients are allowed to be non-coercive, our framework encompasses a much broader scope than variational settings. To replace coercivity, we require only well-posedness of the stochastic evolution equation and two concrete, verifiable a priori estimates. 

Furthermore, we accommodate drift nonlinearities satisfying a modified criticality condition, and we allow for vanishing drift perturbations. The latter permits the inclusion of It\^o--Stratonovich correction terms, enabling the treatment of both noise interpretations.  

In another paper, our results have been applied to the 3D primitive equations with full transport noise. 
In the current paper, we give an application to a reaction-diffusion system which lacks coercivity, further demonstrating the versatility of the framework. 

Finally, we show that even in the coercive case, we obtain new LDP results for equations with critical nonlinearities that rely on our modified criticality condition, including the stochastic 2D Allen--Cahn equation in the weak setting.
\end{abstract}

\maketitle
\addtocontents{toc}{\protect\setcounter{tocdepth}{2}}


\section{Introduction}  

We consider an abstract stochastic evolution equation with small $\eps>0$:
\begin{equation}
\label{eq:see_intro}
\left\{
\begin{aligned}
&\dd Y^\varepsilon + A(t,Y^\varepsilon)\dd t=  \sqrt{\varepsilon} \, B(t,Y^\varepsilon)\dd W,\\
&Y^\varepsilon(0)=x,    
\end{aligned}
\right.
\end{equation} 
where $W$ is a $U$-cylindrical Brownian motion. 
Over the last two decades, large deviations have been studied intensively for SPDEs in the variational setting  \cite{liu09,hongliliu21,kumarmohan22,pan24,TV24}, using the weak convergence approach developed in \cite{budhidupuis01}. So far, these works have pushed the boundary in reducing the assumptions on the pair of coefficients $(A,B)$ in \eqref{eq:see_intro}. In the most recent works, the focus has been on weakening   monotonicity assumptions \cite{pan24,TV24}, on relaxing  growth assumptions for $B$ \cite{pan24,TV24} (e.g.\ allowing for gradient noise), and on handling non-compact embeddings in the underlying Gelfand triple \cite{kumarmohan22,TV24} (allowing for unbounded spatial  domains in applications). 
 
A common assumption in the works mentioned above -- and one that lies at the core of the `variational setting' --   is \emph{coercivity of $(A,B)$}. If $(V,H,V^*)$ is the Gelfand triple in the variational setting and  $\langle \cdot ,\cdot \rangle$ denotes the duality pairing between  $V^*$ and $V$, then 
the coercivity condition is typically formulated as:  
For all $T>0$, there exist $\theta, M>0$ and $\phi\in L^2(0,T)$ such that 
\begin{equation}\label{eq:def coercive}
\<A(t,v),v\?-\tfrac{1}{2}\|B(t,v)\|_{\mathscr{L}_2(U,H)}^2 \geq \theta\|v\|_V^2-M\|v\|_H^2-|\phi(t)|^2,\qquad v\in V,\, t\in[0,T]. 
\end{equation} 
The coercivity condition yields a priori $L^2(\Om;C(0,T;H)\cap L^2(0,T;V))$-estimates for solutions to \eqref{eq:see_intro} (see e.g.\ \cite{AV22variational,liurockner15}), which can be used to prove global well-posedness for \eqref{eq:see_intro}. 

However, there exist many SPDEs for which coercivity fails, but that can still be formulated in a similar setting.  
In this paper, we address a new type of generalization: we treat the case of \textbf{non-coercive} $(A,B)$. 
Concretely, we use the solution notion from the variational setting, i.e.\ we assume that there is a Gelfand triple $(V,H,V^*)$, we take the initial datum $x$ from $H$, and consider solutions $Y^\eps$ belonging   to $C([0,T];H)\cap L^2(0,T;V)$ a.s.  
Due to the latter, we will call this an `$L^2$-setting'. 
Prominent classes of non-coercive SPDEs tractable in an $L^2$-setting include reaction-diffusion systems and 
3D fluid models.  
We refer to Example \ref{ex:applications} for a further discussion including references.   
Since coercivity fails in these examples, well-posedness of \eqref{eq:see_intro} needs to be proved in alternative ways, and generally, $L^2(\Om;C(0,T;H)\cap L^2(0,T;V))$-estimates are unavailable. 
For instance, it may happen that energy bounds merely hold in probability:
\begin{equation}\label{eq:weak energy}
  \lim_{\gamma\to\infty}\P(\|Y^\eps\|_{C(0,T;H)\cap L^2(0,T;V)}>\gamma)=0. 
\end{equation}
This is no problem for applying the results in this paper. In fact, in our main result, Theorem \ref{th:LDP general}, we prove  that the following conditions are already sufficient for the LDP: 
\begin{enumerate}[label=\textup{\roman*.},ref=\textup{\roman*.}]
  \item\label{1} a structural assumption on $(A,B)$ (see Assumption \ref{ass:critvarsettinglocal}) 
  \item\label{2} well-posedness of \eqref{eq:see_intro}
  \item\label{3} a deterministic a priori estimate  for the skeleton equation associated to \eqref{eq:see_intro}
  \item\label{4} uniform boundedness \emph{in probability} for a family of SPDE solutions associated to \eqref{eq:see_intro} 
\end{enumerate}
We emphasize that in \ref{1}, we do not require monotonicity, coercivity, nor a compact embedding in the Gelfand triple. 
The advantage of the criteria above is that  one merely  has to establish  PDE estimates and probabilistic estimates   \emph{specific} to the SPDE of interest. This is significantly easier than   proving an LDP  via the weak convergence approach from scratch, which would require proving difficult continuity properties of the solution maps related to  the equations mentioned in \ref{3} and \ref{4}.  
Furthermore, we provide a proof of well-posedness for the skeleton equation mentioned in \ref{3}, which is non-trivial when the SPDE itself admits only weak energy bounds of the form \eqref{eq:weak energy}. 

Conceptually, the conditions \ref{1}--\ref{4}\ also  clarify  which steps in a weak convergence approach proof are specific for the equation, and which steps can be done at a highly abstract level.

In our proof, we extend the LDP result from \cite{TV24}, which was based on the weak convergence approach. Apart from handling non-coercive coefficients,  we incorporate two further extensions:  
\begin{itemize} 
  \item  more flexible growth conditions on the nonlinearities, see Assumption \ref{ass:critvarsettinglocal} and Remark \ref{rem:difference AB}    
  \item an additional $\eps$-dependent term in the drift, in particular suited for the treatment of Stratonovich noise, see \eqref{eq:SPDE perturbed} and  Theorem \ref{th:LDP perturbed2}  
\end{itemize}

These extensions enable us to cover notable new examples where coercivity is absent.  
We present an LDP for the stochastic 2D Brusselator model with transport noise in Section \ref{sec: brusselator}, which involves a critical nonlinearity. Moreover, the framework developed here served as the basis for \cite{AT25}, where the LDP for the stochastic 3D primitive equations with Stratonovich (or It\^o) transport noise is established. Further possible  applications are discussed in Example \ref{ex:applications}.  

Finally, we also specialize our results to the coercive case. In fact, if $(A,B)$ satisfies the coercivity \eqref{eq:def coercive}, then \ref{2}--\ref{4}\ become redundant, thus \eqref{eq:def coercive} and \ref{1}\ alone are sufficient for the LDP, see Corollaries  \ref{cor:LDPcoercive}, \ref{cor:LDPcoerciveperturbed} and \ref{cor:LDPcoerciveperturbed1}. This then also leads to the LDP for several models with borderline nonlinearities that were not covered yet in the available variational LDP literature. Examples are the 2D Allen--Cahn equation  and the stochastic 2D Burger's equation in the analytically weak setting, with gradient noise (see Example \ref{ex:applications hat F}). Moreover, we can extend applications of the LDP in \cite{TV24} from It\^o to Stratonovich noise (see Remark \ref{ex:stratonovich coercive}).

\subsubsection*{Acknowledgement}
I would like to thank Antonio Agresti and Mark Veraar for helpful comments.
 
\section{Main results}\label{sec:main results} 

Throughout this paper, we assume that $(V,H,V^*)$ is a Gelfand triple of real separable Hilbert spaces and $U$ is a real separable Hilbert space. We let $\mathscr{L}_2(U,H)$ denote the space of Hilbert-Schmidt operators from $U$ to $H$ and write $\nn\cdot\nn_H\ceqq \|\cdot\|_{\mathscr{L}_2(U,H)}$. Moreover, we will work with complex interpolation spaces  $V_\theta\ceqq [V^*,V]_\theta$ for $\theta\in[0,1]$. We  write  $\|\cdot\|_{\theta}\ceqq \|\cdot\|_{V_\theta}$ and will be using the following interpolation estimate:
\begin{equation}\label{eq: interpol est}
  \|v\|_{\theta}\leq \|v\|_{V^*}^{1-\theta}\|v\|_{V}^\theta, \qquad\qquad v\in V. 
\end{equation}

In this section we explain our framework and assumptions, and state our main LDP results: Theorems \ref{th:LDP general}  for (possibly) non-coercive equations and Corollary \ref{cor:LDPcoercive}  for coercive equations. Lists of  applications can be found in Examples \ref{ex:applications} and \ref{ex:applications hat F}.  
Later, in Section \ref{sec:perturb}, we will   state and prove analogous LDP results for perturbed versions of \eqref{eq:SPDE} with an additional $\eps$-dependent drift term, which can for instance be used for treating   Stratonovich integration.

\subsection{The non-coercive case} 
For $\eps>0$, we consider the following  stochastic evolution  equation: 
\begin{equation}\label{eq:SPDE}
   \begin{cases}
  &\dd Y^\eps(t)=-A(t,Y^\eps(t))\dd t+\sqrt{\eps}B(t,Y^\eps(t))\dd W(t), \quad t\in[0,T], \\
  &Y^\eps(0)=x,
\end{cases}
\end{equation}  
where $W$ is a $U$-cylindrical Brownian motion and $x\in H$. 
We will be working with  drift and diffusion coefficients of the following form:
\begin{equation*} 
A(t,v)=A_0(t,v)v-\hat{F}(t,v)-\hat{f} \, \text{ and } \, B(t,v)=B_0(t,v)v+G(t,v)+g. 
\end{equation*}
Here, $A$ and $B$ are quasilinear or semilinear mappings, $\hat{F}$ and $G$ are nonlinear terms, and $\hat{f}$ and $g$ are inhomogeneities. 
The difference between $A$, $B$ defined above and $A$, $B$ from \cite[Ass.\ 2.2]{TV24} are our nonlinearity $\hat{F}$  and inhomogeneity $\hat{f}$ in $A$, which  generalize the terms $F$ and $f$ in \cite[Ass.\ 2.2]{TV24} (see Remark \ref{rem:difference AB} below). The $\hat{F}$-term satisfies modified, more flexible growth and Lipschitz conditions compared to $F$, which can be used to include larger classes of (critical) nonlinearities. Examples of such cases will be given in Example \ref{ex:applications hat F}.

Let us start introducing the assumptions that we use for $A$ and $B$ defined by \eqref{eq:defAB}. As a basis, we work with Assumption \ref{ass:critvarsettinglocal} below, which ensures local well-posedness for \eqref{eq:SPDE}, as we argue in Remark \ref{rem: SPDE loc well-posed}. For global well-posedness and an LDP,  one usually also requires coercivity \eqref{eq:def coercive} for the full pair $(A,B)$ (see \cite[(2.7), Th.\ 2.6]{TV24}), which we do \emph{not} require below.  
Later in this section, we introduce new and much more general conditions (Assumption \ref{ass:coer replace}) to replace the coercivity condition.  
 
\begin{assumption}[Local well-posedness conditions -- extended critical variational setting]\noindent\phantomsection\label{ass:critvarsettinglocal} 

\noindent
$A$ and $B$ are defined by 
\begin{equation}\label{eq:defAB}
A(t,v)=A_0(t,v)v-\hat{F}(t,v)-\hat{f}, \quad B(t,v)=B_0(t,v)v+G(t,v)+g, \qquad t\in\R_+,v\in V,
\end{equation}
and the following holds: 
\begin{enumerate}[label=\textit{(\arabic*)},ref=\textit{(\arabic*)}]
\item\label{it:AB mble}  $A_0\colon \R_+ \times H\to\mathscr{L}$, $B_0\colon \R_+ \times H \to \mathscr{L}(V,\UH)) 
$  and $G\colon \R_+\times V\to\UH$ are Borel measurable and  $g\in L^2_\loc(\R_+;\UH)$. 

 \item\label{it:coercivelinear}  
  For all $n,T>0$, there exist  $\theta_{n,T},M_{n,T}>0$ such that for all $t\in[0,T],u\in H,v\in V$ with $\|u\|_H\leq n$:
  \begin{flalign*}
  &\qquad\qquad \<A_0(t,u)v,v\?-\tfrac{1}{2}\nn B_0(t,u)v\nn_H^2\geq \theta_{n,T}\|v\|_V^2-M_{n,T}\|v\|_H^2. \qquad &\text{\emph{(coercivity of the linear part)}}&
  \end{flalign*}
  
\item \label{it:growth AB} There exist an $m\in\N$ and $\rho_i\geq 0$, $\beta_i\in(\frac{1}{2},1)$ for $i\in\{1,\ldots, m\}$,  such that 
\begin{flalign}\label{eq: subcrit cond}
&\qquad\qquad (1+\rho_i)(2\beta_i-1)\leq 1,& \text{\emph{((sub)criticality)}}&
\end{flalign}
and for all  $n,T>0$ there exists a constant $C_{n,T}$ such that for all $t\in[0,T]$ and $u,v,w\in V$ with $\|u\|_H,\|v\|_H\leq n$, we have
\begin{align*}
\|A_0(t,u)w\|_{V^*}&\leq C_{n,T} \|w\|_V,\\
\|A_0(t,u)w-A_0(t,v)w\|_{V^*}&\leq C_{n,T}\|u-v\|_H\|w\|_V,\\
\nn B_0(t,u)w\nn_{H}&\leq C_{n,T} \|w\|_V,\\
\nn B_0(t,u)w-B_0(t,v)w\nn_{H}&\leq C_{n,T}\|u-v\|_H\|w\|_V,\\ 
\nn G(t,u)\nn_{H}&\leq C_{n,T}\textstyle{\sum_{i=1}^{m}} (1+\|u\|_{\beta_i}^{\rho_i+1}),\\
\nn G(t,u)-G(t,v)\nn_{H}&\leq C_{n,T}\textstyle{\sum_{i=1}^{m}} (1+\|u\|_{\beta_i}^{\rho_i}+\|v\|_{\beta_i}^{\rho_i})\|u-v\|_{\beta_i}.
\end{align*}
\item\label{it:hat F}  
$\hat{F}=\sum_{i=1}^{m}\hat{F}_i$ with $\hat{F}_i\colon\R_+\times V\to V_{\alpha_i}$ Borel measurable and $\alpha_i\in[0,\frac12]$,   
and there exist $\hat{\rho}_i>0$, $\hat{\beta}_i\in(\frac12,1]$ for $i\in\{1,\ldots, m\}$, such that
\begin{flalign}\label{eq: subcrit cond alpha}
&\qquad\qquad
  (1+\hat{\rho}_i)(2\hat{\beta}_i-1)\leq 1+2\alpha_i, & \text{\emph{($\alpha$-(sub)criticality)}}&
\end{flalign}
and for all $n,T>0$, $t\in[0,T]$ and $u,v\in V$ with $\|u\|_H,\|v\|_H\leq n$, 
\begin{align*} 
\|\hat{F}_i(t,u)\|_{\alpha_i}&\leq C_{n,T}  (1+\|u\|_{\hat{\beta}_i}^{\hat{\rho}_i+1}),\\
\|\hat{F}_i(t,u)-\hat{F}_i(t,v)\|_{\alpha_i}&\leq C_{n,T}(1+\|u\|_{\hat{\beta}_i}^{\rho_i}+\|v\|_{\hat{\beta}_i}^{\hat{\rho}_i})\|u-v\|_{\hat{\beta}_i}.
\end{align*}
\item $\hat{f}=\sum_{i=1}^m \hat{f}_i$ with $\hat{f}_i\in L^{2/(1+2\alpha_i)}_{\loc}(\R_+;V_{\alpha_i})$. 
\end{enumerate}
\end{assumption}

\begin{remark}\label{rem:difference AB}
Assumption \ref{ass:critvarsettinglocal} coincides with \cite[Ass.\ 2.2]{TV24}, except for the fact that the nonlinearity $F$ and inhomogeneity $f$ appearing in \cite[Ass.\ 2.2]{TV24} are replaced by $\hat{F}$ and  $\hat{f}$ respectively. 
In fact, the case $\alpha_i=0$ for all $i$ coincides exactly with \cite[Ass.\ 2.2]{TV24}, noting that in the latter, without loss of generality, one can assume that $\rho_i>0$ for all $i$.  
Thus the terms $\hat{F}$ and $\hat{f}$ are more flexible, since the $\alpha$-(sub)criticality condition \eqref{eq: subcrit cond alpha} allows one to use non-zero $\alpha_i$ as well. In that case, the extra regularity of $\hat{F}_i$ (being $V_{\alpha_i}$-valued instead of $V_*$-valued) is traded for larger admissible $\hat{\beta}_i$ and $\hat{\rho}_i$ in the growth and local Lipschitz conditions for $\hat{F}_i$. 
\end{remark}
 
\begin{remark}
  In \cite{TV24} and \cite{AV22variational}, the growth estimates for $A_0$ and $B_0$ from \ref{it:growth AB}  contain an extra factor $1+\|u\|_H$, but since the latter is at most $1+n$,  the current formulation is equivalent. 
\end{remark}

\begin{remark}\label{rem: SPDE loc well-posed}
If $(A,B)$  satisfies Assumption \ref{ass:critvarsettinglocal}, then for every $\eps\in[0,1]$, local well-posedness of  \eqref{eq:SPDE} holds  and there exists a unique maximal solution, due to \cite[Th.\ 5.5]{BGV}. For this, note that $(A,\sqrt{\eps}B)$ satisfies Assumption  \ref{ass:critvarsettinglocal} for  $\eps\in[0,1]$, hence it satisfies  \cite[Ass.\ 5.1]{BGV}, taking into account \cite[Rem.\ 5.2(3)]{BGV}.   

However, one cannot obtain global well-posedness from \cite[Th.\ 3.4, Th.\ 3.5]{AV22variational} or \cite[\S7]{BGV} under only Assumption \ref{ass:critvarsettinglocal}, as we do not have coercivity of $(A,B)$. Therefore, global well-posedness will be imposed separately by Assumption \ref{ass:coer replace}\ref{it:1}. 
\end{remark}

An LDP is abstractly defined as follows.

\begin{definition}\label{def: LDP}
Let $\mathcal{E}$ be a Polish space, let $(\Om,\F,\P)$ be a probability space and let $(Y^\eps)_{\eps\in(0,\eps_0)}$ be a collection of $\mathcal{E}$-valued random variables on $(\Om,\F,\P)$, for some $\eps_0>0$. Let $I\colon \mathcal{E}\to[0,\infty]$ be a function. Then $(Y^{\eps})_{\eps\in(0,\eps_0)}$  \emph{satisfies the large deviation principle (LDP) on $\mathcal{E}$} with rate function $I\colon \mathcal{E}\to[0,\infty]$ if
\begin{enumerate}[label=\emph{(\roman*)},ref=\textup{(\roman*)}]
    \item $I$ has compact sublevel sets,
    \item for all open $E\subset \mathcal{E}$: $\liminf_{\eps\downarrow0}\eps\log \P(Y^{\eps}\in E)\geq -\inf_{z\in E}I(z)$,
    \item for all closed $E\subset \mathcal{E}$: $\limsup_{\eps\downarrow0}\eps\log \P(Y^{\eps}\in E)\leq -\inf_{z\in E}I(z)$.
\end{enumerate} 
\end{definition} 

In our setting, we will use the above definition with for $\mathcal{E}$ the \emph{maximal regularity space} 
\begin{equation}\label{eq: def MR space}
    \MR(0,T)\coloneqq C([0,T];H)\cap L^2(0,T;V), \quad \|\cdot\|_{\MR(0,T)}\coloneqq \|\cdot\|_{C([0,T];H)}+\|\cdot\|_{L^2(0,T;V)}
\end{equation}
for $T>0$, which is a Banach space. From now on, the space $\MR(0,T)$ is always defined as above.

Our first main result, Theorem \ref{th:LDP general}, provides sufficient conditions under which the LDP holds for solutions $(Y^\eps)$ to \eqref{eq:SPDE}. To establish this result, we need to study two equations related to \eqref{eq:SPDE}. 
The skeleton equation is the deterministic evolution equation given by 
\begin{equation}\label{eq:skeleton}
   \begin{cases}
  &\dd u^\psi(t)=-A(t,u^\psi(t))\dd t+B(t,u^\psi(t))\psi(t)\dd t, \quad t\in[0,T], \\
  &u^\psi(0)=x,
\end{cases}
\end{equation} 
for given $\psi\in L^2(0,T;U)$ and $x\in H$. 
Moreover, we consider the following stochastic evolution equation:
\begin{equation}\label{eq:SPDE tilted}
   \begin{cases}
  &\dd X^\eps(t)=-A(t,X^\eps(t))\dd t+B(t,X^\eps(t))\bphieps(t)\dd t+\sqrt{\eps}B(t,X^\eps(t))\dd W, \quad t\in[0,T], \\
  &X^\eps(0)=x,
\end{cases}
\end{equation} 
where $\bphieps$ is  a predictable stochastic process taking a.s.\ values in $L^2(0,T;U)$. 

The following solution notion will be used for equations \eqref{eq:SPDE}, \eqref{eq:skeleton} and \eqref{eq:SPDE tilted}. 

\begin{definition}\label{def:sol} 
Let $A\colon[0,T]\times V\to V^*$, $B\colon[0,T]\times V\to \UH$ and let $x\in H$. Let $W$ be a $U$-cylindrical Brownian motion on a filtered probability space $(\Om,\mathcal{F},(\mathcal{F}_t)_{t\geq 0},\P)$. Consider
\begin{equation}\label{eq:SPDE plain}
     \begin{cases}
   &\dd u(t)=-A(t,u(t))\dd t+B(t,u(t))\dd W(t), \quad t\in[0,T], \\
   &u(0)=x,
\end{cases}
\end{equation}
We say that a strongly progressively measurable process $u\colon [0,T]\times \Om\to V$ is a \emph{strong solution} to \eqref{eq:SPDE plain} if a.s. (recall \eqref{eq: def MR space}): 
\[
u\in\MR(0,T),\; A(\cdot,u(\cdot))\in L^2(0,T;V^*)+L^1(0,T;H),\; B(\cdot,u(\cdot))\in L^2(0,T;\UH),
\]
and a.s.
\begin{align}\label{eq:strong sol}
  u(t)=x-\int_0^t A(s,u(s))\dd s+\int_0^t B(s,u(s))\dd W(s) \: \text{ in }V^* \text{ for all }t\in[0,T].
\end{align}

If $B=0$, we write $u'(t)=-A(t,u(t))$ instead of $\dd u(t)=-A(t,u(t))\dd t$ in \eqref{eq:SPDE plain} and we call $u\in \MR(0,T)$ a strong solution  if $A(\cdot,u(\cdot))\in L^2(0,T;V^*)+L^1(0,T;H)$ and \eqref{eq:strong sol} holds.
\end{definition}

We now formulate a sufficient assumption that will make up for the missing coercivity of $(A,B)$.

\begin{assumption}\label{ass:coer replace} 
$(A,B)$  satisfies  Assumption  \ref{ass:critvarsettinglocal} and  $x\in H$. 
In addition, the following conditions are satisfied for some $\eps_0>0$:
\begin{enumerate}[label=\textit{(\Roman*)},ref=\textit{(\Roman*)}]
  \item\label{it:1} For every $U$-cylindrical Brownian motion $W$ on a filtered probability space $(\Om,\mathcal{F},(\mathcal{F}_t)_{t\geq 0},\P)$ and for every $\eps\in (0,\eps_0)$ and $T>0$, \eqref{eq:SPDE} has a unique strong solution $Y^\eps$ in the sense of Definition \ref{def:sol}. 
  \item\label{it:2} For every ${T}>0$ and $\psi\in L^2(0,{T};U)$: if   $u^{\psi}$ is a strong solution to \eqref{eq:skeleton}, then it satisfies 
  \begin{equation*} 
   \|u^\psi\|_{\MR(0,{T})}\leq C_{x}({T},\|\psi\|_{L^2(0,{T};U)}),
  \end{equation*}
   with $C_{x}\col \R_+\times \R_+\to\R_+$  non-decreasing in both components.  
\item\label{it:3} For every $T,K>0$ and for every collection $(\bphieps)_{\eps\in(0,\eps_0)}$ of predictable stochastic processes $\bphieps\col [0,T]\times \Om\to U$ with   $\|\bphieps\|_{L^2(0,T;U)}\leq K$ a.s., the following holds: 
if $(X^\eps)_{\eps\in(0,\eps_0)}$ is a family such that for all $\eps\in(0,\eps_0)$, $X^\eps$ is a strong solution to \eqref{eq:SPDE tilted}, then 
\begin{equation*} 
    \lim_{\gamma\to\infty}\sup_{\eps\in(0,\eps_0)}\P(\|X^\eps\|_{\MR(0,T)}>\gamma)=0. 
\end{equation*}
\end{enumerate} 
\end{assumption}

\begin{remark}
No existence nor uniqueness is required for $u^\psi$ and $X^\eps$ in \ref{it:2} and \ref{it:3}: only a priori estimates. However, under Assumptions \ref{ass:critvarsettinglocal} and \ref{ass:coer replace},  one does have existence and uniqueness for $u^\psi$ and $X^\eps$, as will be argued in Lemma \ref{lem:consequences ass I-III}. 
\end{remark}

As explained in the Introduction, there are many models that can be described in an $L^2$-setting, but lack coercivity, so variational frameworks cannot be used. In such cases, one may still be able to check the above assumption. Examples of such models are given in Example \ref{ex:applications}. 
Let us already mention that in applications where well-posedness of the SPDE \eqref{eq:SPDE}  has not been proved yet, it is often practical to use  the `non-blow up condition' of  Lemma \ref{lem:suff non-blow up} to verify Assumption \ref{ass:coer replace}\ref{it:1}.

Our first main LDP result is as follows. 

\begin{theorem}[LDP]\label{th:LDP general}
    Let Assumption \ref{ass:coer replace} hold and let $T>0$. Then the family $(Y^\eps)_{\eps\in(0,\eps_0)}$ of solutions to \eqref{eq:SPDE} satisfies the large deviation principle on $L^2(0,T;V)\cap C([0,T];H)$ with rate function  $I\colon L^2(0,T;V)\cap C([0,T];H)\to [0,+\infty]$ defined by
\begin{equation*} 
I(z)=\frac{1}{2}\inf\Big\{\textstyle{\int_0^T}\|\psi(s)\|_U^2\dd s : \psi\in L^2(0,T;U), \, z=u^{\psi}\Big\},
\end{equation*}
where $\inf\varnothing\coloneqq +\infty$ and $u^\psi$ is the unique strong solution to \eqref{eq:skeleton}. 
\end{theorem}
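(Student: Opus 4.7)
The plan is to apply the Budhiraja--Dupuis weak convergence approach in the formulation used in \cite{TV24}, which reduces the LDP on $\MR(0,T)$ to two qualitative statements: \textbf{(S)} whenever $\psi_n\rightharpoonup \psi$ in a closed ball of radius $N$ of $L^2(0,T;U)$, the skeleton solutions satisfy $u^{\psi_n}\to u^{\psi}$ in $\MR(0,T)$; and \textbf{(T)} whenever $\eps_n\downarrow 0$ and predictable controls $\Psi^{\eps_n}$ with $\|\Psi^{\eps_n}\|_{L^2(0,T;U)}\le N$ a.s.\ converge in law to $\phi$ in the weak topology on $L^2(0,T;U)$, the corresponding solutions $X^{\eps_n}$ of \eqref{eq:SPDE tilted} converge in law to $u^\phi$ in $\MR(0,T)$. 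Assumption \ref{ass:coer replace}\ref{it:1} supplies $Y^\eps$, while a preliminary lemma must establish existence and uniqueness for \eqref{eq:skeleton} and \eqref{eq:SPDE tilted}. Since $(A,B)$ is not coercive, this is not covered by \cite{AV22variational} or the global part of \cite{BGV}; I would obtain it by truncating $A_0,B_0,\hat{F},G$ on $H$-balls (which is a coercive problem by Assumption \ref{ass:critvarsettinglocal}\ref{it:coercivelinear}), solving the truncated problem via the local theory of \cite{BGV}, and then removing the truncation using the a priori bounds \ref{it:2} and \ref{it:3}.

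For \textbf{(S)}, bound \ref{it:2} gives a uniform $\MR$-bound on $(u^{\psi_n})$. Testing the equation for $w_n\ceqq u^{\psi_n}-u^{\psi}$ against $w_n$ in the $V$--$V^*$ pairing, and using \ref{it:coercivelinear}, the local Lipschitz bounds of \ref{it:growth AB}--\ref{it:hat F}, the subcriticality conditions \eqref{eq: subcrit cond},\eqref{eq: subcrit cond alpha} and the interpolation \eqref{eq: interpol est} to absorb nonlinear error terms, one obtains
\[
\|w_n(t)\|_H^2+\theta\int_0^t\|w_n\|_V^2\dd s\le C\int_0^t\|w_n\|_H^2\dd s+2\int_0^t\<B(s,u^{\psi})(\psi_n-\psi),w_n(s)\?_H\dd s.
\]
The last integral vanishes as $n\to\infty$ because $s\mapsto B(s,u^{\psi})^{*}w_n(s)$ is strongly precompact in $L^2(0,T;U)$ by Aubin--Lions applied to $w_n$, while $\psi_n-\psi\rightharpoonup 0$; Gronwall then delivers $u^{\psi_n}\to u^{\psi}$ in $\MR(0,T)$.

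For \textbf{(T)}, \ref{it:3} together with fractional time regularity of the It\^o integral and the drift estimates in \ref{it:growth AB},\ref{it:hat F} yields tightness of $(X^{\eps_n})$ on $\MR(0,T)$ equipped with a suitable Polish topology (e.g.\ uniform convergence in $H$ combined with weak convergence in $L^2(0,T;V)$). Applying Jakubowski--Skorokhod to the joint family $(X^{\eps_n},\Psi^{\eps_n},W)$, one obtains a.s.\ convergence on a new probability space to a limit $(X,\phi,W)$. The stochastic integral vanishes in probability because its quadratic variation is $\eps_n\|B(\cdot,X^{\eps_n})\|_{L^2(0,T;\UH)}^2$, bounded on the high-probability events from \ref{it:3}; the drift and control terms pass to the limit by the local Lipschitz estimates exactly as in \textbf{(S)}. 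Hence $X$ is a strong solution of \eqref{eq:skeleton} with control $\phi$, and the uniqueness established in the preliminary step gives $X=u^\phi$; strong $\MR$-convergence (as opposed to only weak $L^2(V)$-convergence) is upgraded by a pointwise energy identity applied to $X^{\eps_n}-u^\phi$.

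The \emph{main obstacle} is that all tightness and convergence arguments must be driven by the \emph{in-probability} bound \ref{it:3} rather than by moment estimates, so standard $L^p$-based uniform integrability tricks are unavailable. The remedy is stopping-time localisation: for each $R>0$, set $\tau^{\eps}_R\ceqq \inf\{t:\|X^{\eps}\|_{\MR(0,t)}>R\}$, so that \ref{it:3} forces $\sup_\eps \P(\tau^{\eps}_R<T)\to 0$ as $R\to\infty$, and on $\{\tau^{\eps}_R=T\}$ the nonlinear coefficients are bounded by \ref{it:growth AB},\ref{it:hat F}. The most delicate point is then identifying the limit of the bilinear term $B_0(\cdot,X^{\eps_n})X^{\eps_n}\Psi^{\eps_n}$, where a strongly convergent state multiplies a weakly convergent control: the margins in \eqref{eq: subcrit cond}, \eqref{eq: subcrit cond alpha} together with \eqref{eq: interpol est} provide exactly the equi-integrability needed on the localised events, and it is precisely this mechanism that makes \ref{it:1}--\ref{it:3}\ the correct non-coercive replacement for \eqref{eq:def coercive}.
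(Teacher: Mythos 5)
Your overall strategy — reduce to the weak convergence approach of \cite{TV24}, well-pose the skeleton equation and stochastic control problem from Assumption~\ref{ass:coer replace}, and then prove two continuity/convergence statements — matches the paper's in outline. However, your proposal diverges in a way that would break the theorem's generality, and it also invokes a different (and here unusable) variant of the weak convergence criterion.

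The key gap is that both your step~\textbf{(S)} and step~\textbf{(T)} rely on \emph{compactness}: Aubin--Lions to extract strong $L^2(0,T;U)$-precompactness of $s\mapsto B(s,u^\psi)^*w_n(s)$, and Jakubowski--Skorokhod to pass from tightness to a.s.\ convergence on a new probability space. Both require a compact embedding $V\into H$ (or something equivalent). The theorem, and the framework it lives in, explicitly does \emph{not} assume compact embeddings — this is one of the stated selling points, precisely because it allows unbounded spatial domains. So the Aubin--Lions/Jakubowski--Skorokhod route would prove the LDP only under an additional compactness hypothesis, which is a strictly weaker result.

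Related to this, your formulation of step~\textbf{(T)} is the classical Budhiraja--Dupuis condition (convergence in law of $X^{\eps_n}$ to $u^\phi$ given convergence in law of $\Psi^{\eps_n}$ to $\phi$), which is exactly what forces a tightness argument. The paper instead uses the stronger reduction of \cite[Th.~4.5]{TV24} (Matoussi--Sabbagh--Zhang type), for which the relevant convergence is the \emph{pathwise comparison} $X^\eps - u^{\Psi^\eps}\to 0$ in probability in $\MR(0,T)$, where $u^{\Psi^\eps}(\om)$ is the skeleton solution with random control $\Psi^\eps(\om)$. Because both processes carry the same control, one can test $X^\eps - u^{\Psi^\eps}$ against itself, apply the It\^o formula and Gr\"onwall, and absorb the critical nonlinear terms using Lemma~\ref{lem: 3.8 extra F} and the linear coercivity of Assumption~\ref{ass:critvarsettinglocal}\ref{it:coercivelinear} — no tightness, no passage-to-a-subsequence, no limit identification. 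This is Proposition~\ref{prop: X^eps weak conv}, and it is the crucial point where the in-probability bound from \ref{it:3} suffices (via localisation to the events $E_{n,\eps}$, which is the correct instinct in your last paragraph). Likewise, Proposition~\ref{prop:cpt sublevel sets} handles the term $\int_0^\cdot\langle b(s)(\psi_n-\psi)(s),w_n(s)\rangle\dd s$ not by compactness of $w_n$ but by \cite[Lem.~4.8]{TV24}, which needs only $C([0,T];H)$-boundedness of $w_n$, $L^1(0,T;V^*)$-boundedness of the drift terms, and the weak convergence of $\psi_n$. Your Gr\"onwall structure for~\textbf{(S)} and the localisation idea are on the right track, but the compactness-based mechanisms for identifying limits need to be replaced by the pathwise-comparison strategy to obtain the result at the stated level of generality.
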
 

\begin{example}\label{ex:applications}
Two applications of Theorem \ref{th:LDP general} for equations with non-coercive coefficients have already been worked out in detail: 
 \begin{itemize}
  \item The 2D Brusselator model with transport noise: see Subsection \ref{ss:brusselator}. 
 \item The 3D primitive equations  with transport noise: see \cite[Th.\ 2.7]{AT25}. Additionally, the Stratonovich noise case \cite[Th.\ 2.9]{AT25} is  proved using Theorem \ref{th:LDP perturbed2} from Section \ref{sec:perturb}.  
 \end{itemize}
We emphasize that in the first-mentioned example, it is crucial to use a non-zero $\alpha_i$ for the  $\hat{F}$-term. 
 
Further possible applications of Theorem \ref{th:LDP general} and Theorem \ref{th:LDP perturbed2}   include the following: 
 \begin{itemize}  
     \item The Lotka--Volterra equations with transport noise in 1--4 dimensions, see \cite[Th.\ 4.5]{AVreac24}.  
     \item The 3D primitive equations with non-isothermal turbulent pressure and transport noise, as studied in \cite{AHHS25nonisothermal}. 
    \item More generally: for settings  with higher regularity (i.e.\ smoother spaces $V$ and $H$), coercivity of $(A,B)$ often gets lost, while well-posedness can sometimes still be proved and Theorem \ref{th:LDP general} and Theorems \ref{th:LDP perturbed2} are applicable. 
        At an abstract level, such settings are precisely considered in \cite[Th.\ 6.7, Rem.\ 6.8]{AV25survey}. 
        
        Concrete examples are the abstract fluid model with upgraded regularity from \cite[Th.\ 7.12]{AV25survey},  the stochastic 1D or 2D Cahn--Hilliard equation mentioned in \cite[\S7.2, Rem.\ 7.7]{AV25survey} and the reaction–diffusion systems found in \cite[(5.1), Prop.\ 5.1, Rem.\ 5.2]{boschhupkes25}.  
 \end{itemize}
 For the models above, one can verify that Assumption \ref{ass:critvarsettinglocal} is satisfied, using the given references. Additionally,  global well-posedness of the corresponding SPDE has already been obtained, thus Assumption \ref{ass:coer replace}\ref{it:1} is satisfied as well. 
To obtain the LDP, one only needs to check Assumption \ref{ass:coer replace}\ref{it:2}\ref{it:3} and then apply Theorem \ref{th:LDP general}. 
\end{example}

The following lemma is an immediate consequence of the blow-up criteria proved in \cite{AV22variational,BGV}. Recall Remark \ref{rem: SPDE loc well-posed}. For the notion of maximal solution, we refer to \cite[Def.\ 4.4]{AV22nonlinear1}.  

\begin{lemma}\label{lem:suff non-blow up}
  Let $(A,B)$  satisfy  Assumption  \ref{ass:critvarsettinglocal}, let  $x\in H$ and $\eps_0\in(0,1]$. Suppose that for every $\eps\in(0,\eps_0)$ and maximal solution $(u,\sigma)$ to \eqref{eq:SPDE}, we have  
  \begin{equation}\label{eq:suff non-blow up}
     \sup_{t\in[0,\sigma\wedge T)}\|u(t)\|_H+\textstyle{\int_0^{\sigma\wedge T}\|u\|_V^2\dd s}<\infty \; {\text{ a.s., \quad for all $T>0$.}}
  \end{equation}
  Then  Assumption \ref{ass:coer replace}\ref{it:1} is valid. 
\end{lemma}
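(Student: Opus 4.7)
The plan is a direct application of the blow-up criteria established in the cited references. By Remark \ref{rem: SPDE loc well-posed}, Assumption \ref{ass:critvarsettinglocal} ensures that for every $\eps\in(0,\eps_0)$ the SPDE \eqref{eq:SPDE} admits a unique maximal solution $(u,\sigma)$. What must be upgraded is \emph{global} well-posedness on $[0,T]$ for arbitrary $T>0$, and the natural tool for this is the blow-up criterion in the (sub)critical variational setting from \cite[Th.\ 3.4--3.5]{AV22variational}, together with \cite[\S7]{BGV}, which applies to $(A,\sqrt{\eps}B)$ under Assumption \ref{ass:critvarsettinglocal}.

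First, I would recall (or cite) the precise form of the relevant blow-up criterion: on the event $\{\sigma<\infty\}$, the $\MR$-norm of the maximal solution must explode up to $\sigma$, i.e.\ on $\{\sigma\leq T\}$ one has
\begin{equation*}
\sup_{t\in[0,\sigma\wedge T)}\|u(t)\|_H^2 + \int_0^{\sigma\wedge T}\|u(s)\|_V^2\,\dd s = \infty\quad\text{a.s.\ on }\{\sigma\leq T\}.
\end{equation*}
This is exactly the criterion used to upgrade local to global well-posedness in the critical variational framework and is the one invoked throughout \cite{AV22variational,BGV}.

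Second, comparing with the hypothesis \eqref{eq:suff non-blow up}, which states precisely that the above quantity is a.s.\ finite on $\{\sigma\leq T\}$ (and trivially on $\{\sigma>T\}$), we conclude $\P(\sigma\leq T)=0$ for every $T>0$. Hence $\sigma=\infty$ a.s., and the restriction of $u$ to $[0,T]$ yields a strong solution in the sense of Definition \ref{def:sol} for every $T>0$; uniqueness is inherited from uniqueness of the maximal solution. This is exactly what Assumption \ref{ass:coer replace}\ref{it:1} asks for.

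The only step that is not completely mechanical is matching the formulation of the blow-up criterion in \cite{AV22variational,BGV} to the norm appearing in \eqref{eq:suff non-blow up}; in particular one has to verify that the blow-up statement is formulated in terms of the $\MR$-norm rather than, say, some interpolation norm or a stronger norm adapted to the critical nonlinearity. I expect this to be a straightforward reading of the relevant theorems, but it is the single place in the proof where a careful cross-reference is required, so it is the main point I would double-check before writing the final version.
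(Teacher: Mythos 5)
Your approach is the same as the paper's: invoke the existence of a unique maximal solution $(u,\sigma)$ from Remark~\ref{rem: SPDE loc well-posed}, apply the blow-up criterion, and conclude $\sigma=\infty$ a.s.\ from the hypothesis \eqref{eq:suff non-blow up}. The one point you flagged as needing a careful cross-reference is indeed where your citation goes slightly astray: the relevant blow-up criterion is \cite[Th.~6.4]{BGV}, not \cite[Th.~3.4--3.5]{AV22variational} --- those latter results give global well-posedness \emph{under coercivity}, which is precisely what is unavailable here (as Remark~\ref{rem: SPDE loc well-posed} points out). The criterion in \cite[Th.~6.4]{BGV} is formulated exactly in the $C([0,\sigma);H)\cap L^2(0,\sigma;V)$ quantity you need, so once you replace the citation the argument closes as you described.
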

\begin{proof}
 From \eqref{eq:suff non-blow up} it follows that 
 \[
 \P(\sigma<T)=\P(\sigma<T, \sup_{t\in[0,\sigma)}\|u(t)\|_H+\textstyle{\int_0^{\sigma}\|u\|_V^2\dd s}<\infty).
 \] 
 By the blow-up criterion of \cite[Th.\ 6.4]{BGV} (and Remark \ref{rem: SPDE loc well-posed}, $\eps\in(0,\eps_0)\subset[0,1]$), the probability on the right-hand side above is zero when $(u,\sigma)$ is the maximal solution. We conclude that $\sigma \geq T$ a.s., and taking the intersection over $T\in\N$, we conclude that $\sigma=\infty$ a.s., thus the maximal solution is globally defined and guarantees the well-posedness stated in Assumption \ref{ass:coer replace}\ref{it:1}. 
\end{proof} 
  
\subsection{The coercive case}

Finally, let us also review the case of coercive coefficients. In fact, if $(A,B)$ is coercive, then most of the assumptions in Theorems \ref{th:LDP general} and \ref{th:LDP perturbed2} are satisfied automatically. Using these results, we thus obtain more direct LDP results.  Since the $\alpha$-(sub)criticality condition \eqref{eq: subcrit cond alpha} for $\hat{F}$ improves that of \cite{TV24}, we obtain several new applications, see Example \ref{ex:applications hat F}. 
 
\begin{corollary}[LDP -- coercive case]\label{cor:LDPcoercive}
  Let $(A,B)$ satisfy Assumption \ref{ass:critvarsettinglocal},  and suppose that $(A,B)$ is coercive in the sense of \eqref{eq:def coercive}. 
  
  Then for every $\eps\in[0,1]$, \eqref{eq:SPDE} has a unique strong solution $Y^\eps$, and the family $(Y^\eps)_{\eps\in(0,\frac12)}$ satisfies the large deviation principle on $L^2(0,T;V)\cap C([0,T];H)$ with the rate function mentioned in Theorem \ref{th:LDP general}. 
\end{corollary}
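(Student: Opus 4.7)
The plan is to verify the three conditions of Assumption \ref{ass:coer replace} under the coercivity hypothesis \eqref{eq:def coercive} and then invoke Theorem \ref{th:LDP general}. Since coercivity is now assumed for the full pair $(A,B)$, this should reduce matters to standard energy estimates.

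For Assumption \ref{ass:coer replace}\ref{it:1}, note that if $(A,B)$ is coercive with constants $\theta,M$ and function $\phi$, then for any $\eps\in[0,1]$ the pair $(A,\sqrt{\eps}B)$ is coercive with the same $\theta,\phi$ (simply because $-\tfrac{\eps}{2}\nn B\nn_H^2\geq -\tfrac12\nn B\nn_H^2$) and still satisfies Assumption \ref{ass:critvarsettinglocal}. This places \eqref{eq:SPDE} within the scope of the global well-posedness theory in \cite{AV22variational,BGV}, giving the unique strong solution $Y^\eps$ for each such $\eps$.

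For Assumption \ref{ass:coer replace}\ref{it:2}, given a strong solution $u^\psi$ to the skeleton equation I would apply the chain rule in the Gelfand triple (cf.\ \cite[Prop.~4.15]{liurockner15}) to $\|u^\psi\|_H^2$. Using coercivity to dominate $-\<A,u^\psi\? + \tfrac12 \nn B(t,u^\psi)\nn_H^2\leq -\theta\|u^\psi\|_V^2+M\|u^\psi\|_H^2+|\phi|^2$, and the trivial bound $\<B(t,u^\psi)\psi,u^\psi\?_H\leq \tfrac12\nn B(t,u^\psi)\nn_H^2+\tfrac12\|\psi\|_U^2\|u^\psi\|_H^2$, the $\nn B\nn_H^2$ terms cancel and one obtains
\begin{equation*}
\tfrac{d}{dt}\|u^\psi\|_H^2+2\theta\|u^\psi\|_V^2\leq (2M+\|\psi\|_U^2)\|u^\psi\|_H^2+2|\phi(t)|^2.
\end{equation*}
Grönwall gives a uniform bound on $\|u^\psi\|_{L^\infty(0,T;H)}$ in terms of $T,\|x\|_H$ and $\|\psi\|_{L^2(0,T;U)}$; integrating in $t$ then controls $\|u^\psi\|_{L^2(0,T;V)}^2$, producing the required non-decreasing function $C_x$.

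For Assumption \ref{ass:coer replace}\ref{it:3}, I would apply It\^o's formula to $\|X^\eps\|_H^2$ for a strong solution $X^\eps$ of the tilted SPDE \eqref{eq:SPDE tilted}, producing an It\^o correction $\eps\nn B(t,X^\eps)\nn_H^2$ and a stochastic integral. Writing $-2\<A,X^\eps\?+\eps\nn B\nn_H^2 \leq -2\theta\|X^\eps\|_V^2+2M\|X^\eps\|_H^2+2|\phi|^2 - (1-\eps)\nn B\nn_H^2$ via coercivity, and bounding $2\<B\bphieps,X^\eps\?_H\leq \tfrac12 \nn B\nn_H^2+2\|\bphieps\|_U^2\|X^\eps\|_H^2$, the restriction $\eps\leq\tfrac12$ is exactly what makes the surplus $-(1-\eps)\nn B\nn_H^2+\tfrac12\nn B\nn_H^2$ non-positive, absorbing the cross term. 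Combining Grönwall with the BDG inequality on a stopping-time truncation then yields a uniform $L^p(\Omega)$-bound, a fortiori uniform boundedness in probability, on $\|X^\eps\|_{\MR(0,T)}$ for $\eps\in(0,\tfrac12)$ and all admissible controls $\bphieps$ with $\|\bphieps\|_{L^2(0,T;U)}\leq K$.

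With \ref{it:1}--\ref{it:3} all verified (with $\eps_0=\tfrac12$), Theorem \ref{th:LDP general} applies and delivers the stated LDP. The only subtle point is the calibration of constants in step \ref{it:3}: the threshold $\eps<\tfrac12$ is dictated precisely by the requirement that coercivity of $(A,B)$ leaves enough of $\nn B\nn_H^2$ available, after subtracting the It\^o correction, to absorb the tilting term $2\<B\bphieps,X^\eps\?_H$ without assuming any additional structure on $B$.
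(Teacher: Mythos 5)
Your overall plan matches the paper's proof: verify conditions \ref{it:1}--\ref{it:3} of Assumption \ref{ass:coer replace} and then invoke Theorem \ref{th:LDP general}. Your arguments for \ref{it:1} and \ref{it:2} are precisely the ones the paper uses (the paper delegates the \ref{it:2}-estimate to the computation leading to \cite[(3.31)]{TV24}, which is the same chain-rule/coercivity/Gr\"onwall calculation you spell out; and for \ref{it:1} the paper additionally notes that \cite[Th.~3.5]{AV22variational} must be extended to accommodate the new terms $\hat F,\hat f$, via \cite[Th.~7.1, Rem.~7.2]{BGV}, which your sketch does not flag but which is not a serious gap).

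The one place your argument diverges from the paper and where the details do not quite hold up as stated is step \ref{it:3}. After your Young split $2\langle B\Psi^\eps,X^\eps\rangle\le\tfrac12\nn B\nn_H^2+2\|\Psi^\eps\|_U^2\|X^\eps\|_H^2$ and the coercivity, the coefficient of $\nn B\nn_H^2$ is exactly $\eps-\tfrac12\le 0$, so for $\eps$ close to $\tfrac12$ there is no surplus of $\nn B\nn_H^2$ left. But your subsequent BDG step requires precisely such a surplus: $\langle M\rangle_T^{1/2}$ involves $\int\|X^\eps\|_H^2\nn B(X^\eps)\nn_H^2$, and after Young on $\sup\|X^\eps\|_H\cdot(\int\nn B\nn_H^2)^{1/2}$ one must absorb a positive multiple of $\eps\,\E\int\nn B\nn_H^2$, which forces $\eps$ strictly below $\tfrac12$ (by a margin that depends on the BDG constant and on how you tune the Young weight). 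So the uniform $L^p(\Omega)$-bound you claim does \emph{not} hold for the whole range $\eps\in(0,\tfrac12)$; and moreover the growth bound on $B$ in Assumption \ref{ass:critvarsettinglocal} is only valid on $H$-balls of radius $n$, so an unconditional bound on $\E\int\nn B\nn_H^2$ is not directly available anyway. The paper sidesteps all of this by citing the proof of \cite[Lem.~4.11]{TV24}, which obtains uniform boundedness \emph{in probability} (not moments) directly --- typically via a stochastic Gr\"onwall or Lenglart-type domination inequality, as used elsewhere in the paper via \cite{geiss24} --- and this is exactly what Assumption \ref{ass:coer replace}\ref{it:3} asks for. Note, however, that this does not sink your argument for the corollary itself: even if you only verify Assumption \ref{ass:coer replace} with some $\eps_0<\tfrac12$, the LDP is a statement about the limit $\eps\downarrow0$ and therefore still extends to the family indexed by $(0,\tfrac12)$. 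The real lesson is that your heuristic ``$\eps<\tfrac12$ is dictated by coercivity leaving enough of $\nn B\nn_H^2$'' oversells what the BDG computation delivers; the clean route to \ref{it:3} is a probabilistic Gr\"onwall bound, not a moment bound.
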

\begin{proof}
Well-posedness follows from  \cite[Th.\ 3.5]{AV22variational} applied with $(A,\sqrt{\eps}B)$, which is also coercive for all $\eps\in[0,1]$ (and see Remark \ref{rem: SPDE loc well-posed}). Here, we note that \cite[Th.\ 3.5]{AV22variational} extends to the case where $A$ contains the additional terms $\hat{F}$ and $\hat{f}$ (see \cite[Th.\ 7.1, Rem.\ 7.2]{BGV}). 
For the LDP, by Theorem \ref{th:LDP general}, it now suffices to verify Assumption \ref{ass:coer replace}\ref{it:2} and \ref{it:3}.  
Moreover, \ref{it:2} holds by the estimates   leading to \cite[(3.31)]{TV24}. Indeed, these estimates rely only on the coercivity \eqref{eq:def coercive} of $(A,B)$ and imply that, whenever $u\in\MR(0,T)$ is a strong solution to \eqref{eq:skeleton}:  
\begin{equation*} 
  \|u\|_{\MR(0,T)}\leq (2+ {\theta}^{-1})^{\frac{1}{2}}\big(\|x\|_H+\sqrt{2}\|\phi\|_{L^2(0,T)} \big)\exp[ MT+\tfrac{1}{2}\|\psi\|_{L^2(0,T;U)}^2],
\end{equation*}
where $\theta, M>0$ and $\phi\in L^2(0,T)$ are as in \eqref{eq:def coercive}.  
Lastly, \ref{it:3} follows from the proof of \cite[Lem.\ 4.11]{TV24}, noting that the latter again relies only on the coercivity \eqref{eq:def coercive} of $(A,B)$. 
\end{proof}

The above result for the coercive case (i.e.\ a variational setting) leads to interesting new applications  that were not covered yet in the  variational LDP literature. 

\begin{example}\label{ex:applications hat F}
If we have coercivity of $(A,B)$, and in Assumption \ref{ass:critvarsettinglocal} we have $\alpha_i=0$ for all $i$, then Corollary \ref{cor:LDPcoercive} coincides with \cite[Th.\ 2.6]{TV24}, recalling Remark \ref{rem:difference AB}. 
Yet, using non-zero $\alpha_i$,   Corollary \ref{cor:LDPcoercive} yields the LDP for new equations with critical nonlinearities for which the LDP was unavailable before. 
In particular, we immediately obtain the LDP for all examples given in \cite[\S8]{BGV} when we take Gaussian noise. These include: 
    \begin{itemize}
 \item 2D Allen--Cahn equation with transport noise, analytically weak setting: Subsection \ref{ss:allen}
 \item a generalized 2D Burger's equation with transport noise, see \cite[Ex.\ 8.3]{BGV}
 \item stochastic Kuramoto–Sivashinsky (fourth order) equation, see \cite[\S8.3]{BGV}
 \end{itemize} 
 \end{example}

\section{The skeleton equation with flexible nonlinearities}

In this section we prove local well-posedness of the skeleton equation \eqref{eq:skeleton} and a blow-up criterion. We extend the theory of  \cite[\S3]{TV24} under Assumption \ref{ass:critvarsettinglocal}, which includes more general nonlinearities and inhomogeneities (see Remark \ref{rem:difference AB}). 

\subsection{Linearized problem}
Throughout this subsection, we assume that $m\in\N$ and  $\alpha_i\in[0,\frac12]$ for $i\in\{1,\ldots,m\}$ and study a  linearization of the skeleton equation. We discard the nonlinearities $\hat{F}$ and $G$ in the coefficients $A$ and $B$ (see \eqref{eq:defAB}), and we consider for fixed $w\in L^\infty(0,T;H)$:
\begin{equation}\label{eq: linear skeleton}
\begin{cases}
    &u'(t)+A_0(t,w(t))u(t)-B_0(t,w(t))u(t)\psi(t)=\sum_{i=1}^m\bar{f}_i(t)+\bar{g}(t)\psi(t), \\
    &u(0)=x.
\end{cases}
\end{equation}
Here, we assume that $A_0$ and $B_0$ are as in Assumption \ref{ass:critvarsettinglocal}, and $\bar{f}_{i}\in L^{2/(1+2\alpha_i)}(0,T;V_{\alpha_i})$ and $\bar{g}\in L^2(0,T;\UH)$.
The goal is to prove well-posedness and energy estimates for  \eqref{eq: linear skeleton}.   
Later, it will be used to treat the terms $\hat{F}(u)$ and $\hat{f}$.  

We define the following sum space:
\[
S_T=L^{2}(0,T;V^*)+L^{1}(0,T;H), 
\]
where we note that the spaces appearing in $S_T$ form a Banach interpolation couple with ambient space $L^1(0,T;V^*)$. Then, $S_T$ is a Banach space \cite[Prop.\ C.1.3]{HNVWvolume1} and $S_T\into L^1(0,T;V^*)$. 

In \cite[\S3.1]{TV24}, inhomogeneities in $S_T$ were studied. 
Now, we note that   the Stein--Weiss theorem for complex interpolation \cite[Th.\ 14.3.1]{HNVWvolume3} yields for all $\alpha\in[0,\frac12]$:
\begin{equation}\label{eq: into S}
  L^{2/(1+2\alpha)}(0,T;V_{\alpha})= [L^{2}(0,T;V^*),L^{1}(0,T;H)]_{2\alpha}\into S_T,  
\end{equation}
where the first identification is isometric, and  the embedding into $S_T$ is contractive, by exactness of the complex interpolation method.   
In particular, \eqref{eq: into S} and the theory in \cite[\S3.1]{TV24} allow us to consider sums of inhomogeneities $\bar{f}_i\in L^{2/(1+2\alpha_i)}(0,T;V_{\alpha_i})\subset S_T$. 
 
The following result was proved in \cite[Th.\ 3.4]{TV24}. See \eqref{eq: def MR space} for the definition of $\MR(0,T)$.

\begin{theorem}\label{th: well-posed with M in L^1}
Let $\bar{A}\colon [0,T]\to\mathscr{L}(V,V^*)$ and suppose that
for all $u\in\MR(0,T)$: $  \bar{A}(\cdot)u(\cdot)\in S_T$ and $\|\bar{A}(\cdot)u(\cdot)\|_{S_T}\leq \alpha \|u\|_{\MR(0,T)}$ 
  for some constant $\alpha>0$ independent of $u$.
  Suppose that for some $\theta>0$ and $M\in L^1(0,T)$:  $\<\bar{A}(t)v,v\?\geq \theta \|v\|_V^2-M(t)\|v\|_H^2$ for all $v\in V$.  
  
  Then for any $h\in S_T$, there exists a unique strong solution  $u\in \MR(0,T)$ to
  \begin{equation*}
    \begin{cases}
    u'(t)+\bar{A}(t)u(t)=h(t), \quad t\in[0,T],\\
    u(0)=x
  \end{cases}
  \end{equation*}
 and the following estimate holds:
 \begin{equation*}
\|u\|_{\MR(0,T)}\leq C_{\theta}\exp(2\|M\|_{L^1(0,T)})\big(\|h\|_{S_T}+\|x\|_H\big).
\end{equation*}
\end{theorem}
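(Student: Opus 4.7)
I would prove Theorem \ref{th: well-posed with M in L^1} by combining a Faedo--Galerkin approximation for existence with a chain-rule-based energy estimate that simultaneously delivers the a priori bound and uniqueness. The exponential factor $\exp(2\|M\|_{L^1(0,T)})$ in the final estimate appears naturally when Gr\"onwall's inequality is applied to the lower-order term $-M(t)\|u\|_H^2$ in the coercivity hypothesis; equivalently, one could first reduce to the genuinely coercive case by the substitution $v(t) = \exp(-\int_0^t M(s)\dd s)\,u(t)$, which turns the hypothesis into $\l[\bar{A}(t)+M(t)\mathrm{Id}]v,v\r \geq \theta\|v\|_V^2$.

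\textbf{The core: energy estimate.} Suppose $u \in \MR(0,T)$ solves $u' + \bar{A}u = h$. By the mapping hypothesis on $\bar{A}$, $u' = h - \bar{A}(\cdot)u(\cdot) \in S_T$. The chain rule for the squared $H$-norm of elements of $\MR(0,T)$ whose distributional derivative lies in $S_T$---where the pairing is interpreted as $\l f_1, u\r_{V^*,V} + (f_2, u)_H$ for any decomposition $u' = f_1 + f_2$---is the content of \cite[\S3.1]{TV24}. Combined with the coercivity, this yields
\begin{equation*}
\tfrac{1}{2}\tfrac{\ddd}{\ddd t}\|u(t)\|_H^2 + \theta\|u(t)\|_V^2 \leq M(t)\|u(t)\|_H^2 + \l h(t), u(t)\r.
\end{equation*}
Splitting $h = h_1 + h_2$ in $S_T$, bounding $|\l h_1,u\r| \leq \tfrac{1}{2\theta}\|h_1\|_{V^*}^2 + \tfrac{\theta}{2}\|u\|_V^2$ and $|(h_2,u)_H| \leq \|h_2\|_H\|u\|_H$ via Young, absorbing the $\tfrac{\theta}{2}\|u\|_V^2$ term on the left, and running Gr\"onwall on $\|u(t)\|_H^2$ with integrating factor $\exp(2\int_0^t M(s)\dd s)$ yields the claimed bound after taking the infimum over decompositions of $h$.

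\textbf{Existence and uniqueness.} For existence I would run a Faedo--Galerkin scheme along a basis $\{e_k\}_{k\geq 1}$ of $V$: the projected system is a linear ODE in $\R^n$ with $L^1_t$ coefficients (inherited from $\bar{A}u_n \in S_T$), locally solvable via Carath\'eodory; the a priori bound above, applied uniformly in $n$, rules out blow-up and gives uniform estimates in $\MR(0,T)$. Weak/weak-$*$ compactness in $L^2(0,T;V) \cap L^\infty(0,T;H)$ then extracts a limit, and the linearity of $\bar{A}$ makes the passage to the limit in the weak formulation routine. Uniqueness follows by applying the same a priori estimate to the difference of two solutions, with $h = 0$ and $x = 0$.

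\textbf{Main obstacle.} The principal technical difficulty is justifying the chain rule when $u'$ lies only in the sum space $S_T$. Unlike the classical Gelfand-triple setting in which $u' \in L^2(0,T;V^*)$ and the identity $\tfrac{\ddd}{\ddd t}\|u\|_H^2 = 2\l u', u\r_{V^*,V}$ follows from Lions--Magenes-type results, the $L^1(0,T;H)$ component of $u'$ does not embed into any reflexive dual and must be handled directly: mollify $u$ in time, verify the identity for smooth approximants, and pass to the limit using continuity of the decomposed pairing. This is exactly the content of the preceding subsection \cite[\S3.1]{TV24}, on which my proof would rely.
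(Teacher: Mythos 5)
The paper does not prove Theorem \ref{th: well-posed with M in L^1}; it imports the statement verbatim from \cite[Th.\ 3.4]{TV24}, as the sentence immediately preceding the theorem notes, so there is no in-paper proof against which to compare your attempt. Assessed on its own merits, your Faedo--Galerkin plus energy-estimate scheme is a legitimate reconstruction, and you have correctly located the genuinely hard step: justifying $\tfrac{\ddd}{\ddd t}\|u\|_H^2 = 2\langle u', u\rangle$ when $u'$ lies only in $S_T=L^2(0,T;V^*)+L^1(0,T;H)$, which is precisely the content of \cite[\S 3.1]{TV24}. Two technical points in your sketch deserve more than the word ``routine.'' First, the term $\|h_2\|_H\|u\|_H$ arising from the $L^1(0,T;H)$-component of $h$ is not quadratic in $\|u\|_H$, so a naive pointwise Gr\"onwall would push $\|h_2\|_{L^1(0,T;H)}$ into the exponential; to recover the stated estimate with $\exp(2\|M\|_{L^1(0,T)})$ acting \emph{linearly} on $\|h\|_{S_T}$, one should bound that term by $\|h_2(t)\|_H\sup_{s\le t}\|u(s)\|_H$, integrate, absorb a fraction of $\sup_{s\le t}\|u(s)\|_H^2$ by Young, and then run Gr\"onwall on $\sup_{s\le t}\|u(s)\|_H^2$. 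Second, the Galerkin limit passage is not entirely routine either: one must argue that $\int_0^T\langle\bar{A}u_n,\phi\rangle\dd t\to\int_0^T\langle\bar{A}u,\phi\rangle\dd t$ even though $\bar{A}(\cdot)u_n(\cdot)$ takes values in the non-reflexive space $S_T$; this uses the boundedness of $u\mapsto\bar{A}u\colon\MR(0,T)\to S_T$, the continuous pairing between $S_T$ and $L^2(0,T;V)\cap L^\infty(0,T;H)$, and weak convergence of $u_n$ in $L^2(0,T;V)$ together with weak-$*$ convergence in $L^\infty(0,T;H)$, and should be spelled out. Finally, your parenthetical alternative -- the change of variables $v(t)=\exp(-\int_0^t M\dd s)\,u(t)$ reducing to the genuinely coercive case -- is arguably the cleaner route and is plausibly closer to what \cite{TV24} actually does, since it permits invoking a coercive well-posedness result established separately in that section rather than re-running a from-scratch Galerkin argument with $L^1$-in-time coefficients.
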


A consequence of Theorem \ref{th: well-posed with M in L^1} is the following result for \eqref{eq: linear skeleton},  which extends  \cite[Cor. 3.5]{TV24}. 

\begin{corollary}\label{cor: skeleton MR linearized crit var setting}
Let $A_0$ and $B_0$ satisfy the conditions concerning $A_0,B_0$ in Assumption \ref{ass:critvarsettinglocal} and let $\psi\in L^2(0,T;U)$.
Let $T>0$ and let $w\in L^\infty(0,T;H)$.
Then, if $\bar{f}_i \in L^{2/(1+2\alpha_i)}(0,T;V_{\alpha_i})$ for $i\in\{1,\ldots,m\}$ and $\bar{g}\in L^2(0,T;\UH)$, there exists a unique strong solution  $u\in \MR(0,T)$ to
\begin{equation}\label{eq:skeletonlinear}
\begin{cases}
    &u'(t)+A_0(t,w(t))u(t)-B_0(t,w(t))u(t)\psi(t)=\sum_{i=1}^m\bar{f}_i(t)+\bar{g}(t)\psi(t), \\
    &u(0)=x,
\end{cases}
\end{equation}
Moreover, for any $\tilde{T}\in[0,T]$ there exists a constant $K_{\tilde{T}}>0$ such that
 \begin{equation}\label{def: maxreg}
 \|u\|_{\MR(0,\tilde{T})}\leq K_{\tilde{T}}\Big(\|x\|_H+\textstyle{\sum_{i=1}^{m}}\|\bar{f}_i\|_{L^{2/(1+2\alpha_i)}(0,\tilde{T};V_{\alpha_i})} +\|\bar{g}\|_{L^2(0,\tilde{T};\UH)}\Big),
 \end{equation} 
and $K_{\tilde{T}}$ is non-decreasing in $\tilde{T}$ and depends further only on $T$, $\|w\|_{L^\infty(0,T;H)}$ and $\|\psi\|_{L^2(0,\tilde{T};U)}$.
\end{corollary}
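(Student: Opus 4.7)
The plan is to deduce this from Theorem \ref{th: well-posed with M in L^1} by absorbing the first-order zero-order perturbation $-B_0(t,w)\cdot\psi$ into the operator on the left and by recasting the right-hand side as an element of $S_T$. Concretely, I would define
\[
\bar{A}(t)v \ceqq A_0(t,w(t))v - B_0(t,w(t))v\,\psi(t),\qquad h(t) \ceqq \textstyle\sum_{i=1}^m \bar{f}_i(t) + \bar{g}(t)\psi(t),
\]
and set $n \ceqq \|w\|_{L^\infty(0,T;H)}$, which is finite by hypothesis, so that the constants $\theta_{n,T}, M_{n,T}, C_{n,T}$ from Assumption \ref{ass:critvarsettinglocal} are available uniformly in $t\in[0,T]$.

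First I would check the coercivity hypothesis of Theorem \ref{th: well-posed with M in L^1}. Using $|\<B_0(t,w(t))v\,\psi(t),v\?_H|\leq \nn B_0(t,w(t))v\nn_H \|\psi(t)\|_U\|v\|_H$ together with Young's inequality $ab\leq \tfrac12 a^2+\tfrac12 b^2$, I obtain
\[
\<\bar{A}(t)v,v\? \geq \<A_0(t,w(t))v,v\?-\tfrac12\nn B_0(t,w(t))v\nn_H^2 -\tfrac12\|\psi(t)\|_U^2\|v\|_H^2 \geq \theta_{n,T}\|v\|_V^2-M(t)\|v\|_H^2,
\]
with $M(t)\ceqq M_{n,T}+\tfrac12\|\psi(t)\|_U^2\in L^1(0,T)$. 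Next, the mapping hypothesis: the growth bounds in Assumption \ref{ass:critvarsettinglocal}\ref{it:growth AB} give $\|A_0(t,w(t))u(t)\|_{V^*}\leq C_{n,T}\|u(t)\|_V$ (so the $A_0$-part of $\bar{A}u$ lies in $L^2(0,T;V^*)$) and $\|B_0(t,w(t))u(t)\psi(t)\|_H\leq C_{n,T}\|u(t)\|_V\|\psi(t)\|_U$, which places the $B_0\psi$-part in $L^1(0,T;H)$ by Cauchy--Schwarz. Summing, $\|\bar{A}(\cdot)u(\cdot)\|_{S_T}\lesssim \|u\|_{\MR(0,T)}$ with constant depending on $n,T,\|\psi\|_{L^2(0,T;U)}$.

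For the right-hand side, I would invoke the key embedding \eqref{eq: into S} to get $\bar{f}_i\in S_T$ with $\|\bar{f}_i\|_{S_T}\leq \|\bar{f}_i\|_{L^{2/(1+2\alpha_i)}(0,T;V_{\alpha_i})}$, while $\|\bar{g}\psi\|_{L^1(0,T;H)}\leq \|\bar{g}\|_{L^2(0,T;\UH)}\|\psi\|_{L^2(0,T;U)}$, so $h\in S_T$. Theorem \ref{th: well-posed with M in L^1} then yields the unique strong solution $u\in\MR(0,T)$ and the estimate
\[
\|u\|_{\MR(0,T)}\leq C_{\theta_{n,T}}\exp\bigl(2\|M\|_{L^1(0,T)}\bigr)\bigl(\|h\|_{S_T}+\|x\|_H\bigr).
\]
Plugging in the bound on $\|h\|_{S_T}$ gives \eqref{def: maxreg} for $\tilde T = T$. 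To get the version for arbitrary $\tilde T\in[0,T]$, I would repeat the argument on the interval $[0,\tilde T]$: the coercivity/growth constants on $[0,\tilde T]$ can be taken equal to those on $[0,T]$, and $\|M\|_{L^1(0,\tilde T)}\leq M_{n,T}\tilde T+\tfrac12\|\psi\|_{L^2(0,\tilde T;U)}^2$, which is non-decreasing in $\tilde T$ and depends only on the quantities listed in the statement.

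I do not foresee serious obstacles; the one point deserving care is the inclusion \eqref{eq: into S} of $L^{2/(1+2\alpha_i)}(0,T;V_{\alpha_i})$ into $S_T$, which is the new ingredient beyond \cite[Cor.\ 3.5]{TV24} and is exactly what allows the $\hat F$-term to be handled with non-zero $\alpha_i$ later on. Everything else is a direct application of Theorem \ref{th: well-posed with M in L^1}, with bookkeeping to track the $\tilde T$-dependence of constants.
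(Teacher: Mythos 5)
Your proposal is correct and takes essentially the same route as the paper: absorb $-B_0(\cdot,w)\cdot\psi$ into $\bar A$, place the inhomogeneity in $S_T$ via the embedding \eqref{eq: into S} together with Cauchy--Schwarz, and then apply Theorem \ref{th: well-posed with M in L^1}. The paper simply outsources the verification that $\bar A$ meets the hypotheses of that theorem to \cite[Cor.\ 3.5]{TV24}, whereas you carry out that verification explicitly; the content and the bookkeeping for the $\tilde T$-dependence are the same.
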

\begin{proof}
By \cite[Cor. 3.5]{TV24},  
 $\bar{A}\colon [0,T]\to\mathscr{L}(V,V^*)$ defined  by $\bar{A}(t)v\coloneqq  A_0(t,w(t))v-B_0(t,w(t))v\psi(t)$ satisfies all conditions of Theorem \ref{th: well-posed with M in L^1}. Moreover, thanks to \eqref{eq: into S} and the Cauchy--Schwarz inequality, $h\ceqq \sum_{i=1}^{m}\bar{f}_i+\bar{g}\psi$ satisfies $h\in S_T$. 
The claim thus follows from Theorem \ref{th: well-posed with M in L^1},  the observation that
\[
\|h\|_{S_{\tilde{T}}}\leq \textstyle{\sum_{i=1}^{m}}\|\bar{f}_i\|_{L^{2/(1+2\alpha_i)}(0,\tilde{T};V_{\alpha_i})} +\|\bar{g}\|_{L^2(0,\tilde{T};\UH)}\|\psi\|_{L^2(0,\tilde{T};U)},
\] 
and by putting $K_{\tilde{T}}\coloneqq C_{\theta_{n,T}}\exp(2\|M_{n,T}+\frac{1}{2}\|\psi\|_U^2\|_{L^1(0,\tilde{T})})(1\vee\|\psi\|_{L^2(0,\tilde{T};U)})$ as in the proof of \cite[Cor. 3.5]{TV24}. 
\end{proof}

\subsection{Nonlinear problem} 

We turn to the original nonlinear skeleton equation \eqref{eq:skeleton}. 
For $\psi\in L^2(0,T;U)$,  initial data $v_0\in H$ and a time interval  $[0,\tilde{T}]$, we rewrite the latter equation as
\begin{align}\label{eq: local well-posed}
 & \begin{cases}
    &u'+\breve{A}(\cdot,u)u=\hat{F}(\cdot,u)+\hat{f}+(G(\cdot,u)+g)\psi\qquad \text{ on } [0,\tilde{T}], \\
    &u(0)=v_0,
  \end{cases}\\ &\hspace{-.9cm}\text{ where } \breve{A}(t,u)v\ceqq A_0(t,u)v-B_0(t,u)v\psi. \notag
\end{align}

We will prove the following local well-posedness result analogous to \cite[Th.\ 3.7]{TV24},  now with the more general terms $\hat{F}(u)$ and $\hat{f}$ from Assumption \ref{ass:critvarsettinglocal}. 
As in \cite{TV24}, we define 
\[
\MR(0,T)=L^2(0,T;V)\cap C([0,T];H).
\]

\begin{theorem}[Local well-posedness of the skeleton equation]\label{th: local well posedness skeleton}
Suppose that $(A,B)$ satisfies Assumption \ref{ass:critvarsettinglocal}. 
Let $u_0\in H$ be fixed. Then there exist $\tilde{T},\eps>0$ such that for each $v_0\in B_H(u_0,\eps)$,
there exists a unique strong solution $u_{v_0}\in \MR(0,\tilde{T})$ to
\eqref{eq: local well-posed}. 
Moreover, there exists a constant $C>0$ such that for all $v_0,w_0\in B_H(u_0,\eps)$:
\begin{equation}\label{eq: ct dependence local well-posed}
  \|u_{v_0}-u_{w_0}\|_{\MR(0,\tilde{T})}\leq C\|v_0-w_0\|_H.
\end{equation}
\end{theorem}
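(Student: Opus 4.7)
The natural strategy is a Banach fixed-point argument on a ball in $\MR(0,\tilde{T})$, building on Corollary \ref{cor: skeleton MR linearized crit var setting} exactly in the spirit of \cite[Th.\ 3.7]{TV24}, but with the nonlinear term $\hat{F}(\cdot,u)$ and inhomogeneity $\hat{f}$ now thrown into the right-hand side as sources in the spaces $L^{2/(1+2\alpha_i)}(0,\tilde{T};V_{\alpha_i})$. Concretely, for $v_0\in H$ close to $u_0$ and $w\in\MR(0,\tilde{T})$, I would let $\Phi(w)\ceqq u$ be the unique strong solution (provided by Corollary \ref{cor: skeleton MR linearized crit var setting}) to
\begin{equation*}
\begin{cases}
u'+A_0(\cdot,w)u-B_0(\cdot,w)u\psi=\sum_{i=1}^m \hat{F}_i(\cdot,w)+\hat{f}+(G(\cdot,w)+g)\psi, \\
u(0)=v_0,
\end{cases}
\end{equation*}
so that a fixed point of $\Phi$ is a strong solution to \eqref{eq: local well-posed}. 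For fixed $R>0$ (to be chosen later) and $\tilde{T}$ small, the plan is to show that $\Phi$ maps the closed ball $B_R\subset\MR(0,\tilde{T})$ centered at the free solution of the linear problem into itself, and is a strict contraction there.

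The key step is to convert the growth and local Lipschitz bounds in Assumption \ref{ass:critvarsettinglocal}\ref{it:growth AB}\ref{it:hat F} into $L^{2/(1+2\alpha_i)}(0,\tilde{T};V_{\alpha_i})$ and $L^2(0,\tilde{T};\UH)$ estimates via the interpolation inequality $\|v\|_\theta\leq \|v\|_{V^*}^{1-\theta}\|v\|_V^\theta$. For the $\hat{F}_i$-term applied to $w\in B_R$, this gives
\[
\|\hat{F}_i(\cdot,w)\|_{L^{2/(1+2\alpha_i)}(0,\tilde T;V_{\alpha_i})}
\leq C_{n,T}\bigl(\tilde T^{1/2+\alpha_i}+\|w\|_{L^\infty(0,\tilde T;H)}^{(1-\hat\beta_i)(\hat\rho_i+1)}\|w\|_{L^2(0,\tilde T;V)}^{\hat\beta_i(\hat\rho_i+1)}\tilde T^{\gamma_i}\bigr),
\]
where $\gamma_i\geq 0$ is determined by the $\alpha$-(sub)criticality $(1+\hat\rho_i)(2\hat\beta_i-1)\leq 1+2\alpha_i$; this is exactly the condition that makes the H\"older exponent in time non-negative. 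The analogous bound for $G(\cdot,w)\psi$ in $L^2(0,\tilde T;\UH)\cdot L^2(0,\tilde T;U)\hookrightarrow L^1(0,\tilde T;V^*)\subset S_{\tilde T}$ uses the (sub)criticality \eqref{eq: subcrit cond} for $G$, and the Lipschitz counterparts follow by the same interpolation machinery applied to the differences.

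Combining these with the estimate \eqref{def: maxreg} from Corollary \ref{cor: skeleton MR linearized crit var setting}, applied with $w\in B_R$ as the frozen coefficient (noting $\|w\|_{L^\infty(0,\tilde T;H)}\leq R$, so $K_{\tilde T}$ is uniformly bounded), yields
\[
\|\Phi(w)\|_{\MR(0,\tilde T)}\lesssim \|v_0\|_H+o_{\tilde T}(1)+o_R(1)\cdot R,
\]
and a parallel estimate for $\|\Phi(w_1)-\Phi(w_2)\|_{\MR(0,\tilde T)}$ in terms of $\|w_1-w_2\|_{\MR(0,\tilde T)}$. Choosing $R$ comparable to $\|u_0\|_H$ and $\tilde T$ sufficiently small, we obtain both self-mapping and contraction, giving a unique fixed point $u_{v_0}\in\MR(0,\tilde T)$. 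The continuous dependence \eqref{eq: ct dependence local well-posed} follows by subtracting the equations for $u_{v_0}$ and $u_{w_0}$, applying the same linear energy estimate to the difference and absorbing small nonlinear contributions for $\tilde T$ small.

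The main obstacle is handling the critical case of equality in \eqref{eq: subcrit cond} and \eqref{eq: subcrit cond alpha}: then shrinking $\tilde T$ alone does not produce smallness in the nonlinear bounds. One has to exploit that the free linear solution $\Phi(0)$ depends continuously on $\tilde T$ (vanishing as $\tilde T\downarrow 0$ in the relevant norm after subtracting the $v_0$-contribution) so that $R$ itself can be taken small — this is exactly the trick used in \cite[Th.\ 3.7]{TV24}, and the only new point is bookkeeping the extra terms $\hat{F}_i$ in $V_{\alpha_i}$ rather than $V^*$, which is accommodated by the isometric embedding \eqref{eq: into S}.
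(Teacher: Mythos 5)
Your proposal is a Banach fixed-point argument centred at the free linear solution, delegating the quasilinear bookkeeping to \cite[Th.\ 3.7]{TV24} and supplying new estimates for the $\hat F_i$-terms in $L^{2/(1+2\alpha_i)}(0,\tilde T;V_{\alpha_i})$ via the critical interpolation inequality — this is exactly the paper's strategy, and the critical-case trick you identify (smallness via the radius $r$ and via $\|z_{u_0}\|_{L^2(0,\tilde T;V)}$ rather than via $\tilde T$ alone) is precisely what is used. Two small technical points differ. First, the paper's iteration map $\Psi_{v_0}(v)$ freezes the \emph{operator} at the reference $u_0$ and puts the quasilinear correction $(\breve A(u_0)-\breve A(v))v$ on the right-hand side, whereas you freeze at the iterate $w$; the fixed points coincide, but keeping a fixed maximal-regularity operator makes the contraction estimate cleaner (you can apply Corollary~\ref{cor: skeleton MR linearized crit var setting} with a single $K_{\tilde T}$, and the difference of two iterates solves a linear equation in that \emph{same} operator rather than in a $w$-dependent one). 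Second, in your sample bound for $\|\hat F_i(\cdot,w)\|_{L^{2/(1+2\alpha_i)}(0,\tilde T;V_{\alpha_i})}$ the exponents on $\|w\|_{L^\infty H}$ and $\|w\|_{L^2 V}$ should be $(2-2\hat\beta_i)(\hat\rho_i+1)$ and $(2\hat\beta_i-1)(\hat\rho_i+1)$ rather than $(1-\hat\beta_i)(\hat\rho_i+1)$ and $\hat\beta_i(\hat\rho_i+1)$: for $\theta\in[1/2,1]$, $V_\theta=[H,V]_{2\theta-1}$, so Lemma~\ref{lem:crit interpol} gives $\|u\|_{L^{2/(2\hat\beta-1)}(0,T;V_{\hat\beta})}\leq\|u\|_{L^\infty H}^{2-2\hat\beta}\|u\|_{L^2V}^{2\hat\beta-1}$. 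This is exactly what makes the exponent on $\|u\|_{L^2 V}$ in Lemma~\ref{lem: 3.8 extra F}\ref{it:emb4} equal $1+2\alpha_i$ under equality in \eqref{eq: subcrit cond alpha}. With those adjustments the outline matches the paper's proof.
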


To prove Theorem \ref{th: local well posedness skeleton}, we need suitable   estimates for $\hat{F}$.  
The following interpolation estimate is crucial for this end. It follows from \cite[Lem.\ 2.5, (5.48)]{BGV}.  

\begin{lemma}[Critical interpolation estimate]\label{lem:crit interpol}
Let  $\hat{\beta}\in (1/2,1]$ and $T>0$.    
Then for all $u\in \MR(0,T)$, it holds that 
\begin{equation*} 
  \|u\|_{L^{{2}/({2\hat{\beta}-1})}(0,T;V_{\hat{\beta}})}\leq   \|u\|_{L^{\infty}(0,T;H)}^{2-2\hat{\beta}}\|u\|_{L^{2}(0,T;V)}^{2\hat{\beta}-1}\leq \|u\|_{\MR(0,T)}.
\end{equation*}
\end{lemma}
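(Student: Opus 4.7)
The plan is to derive the inequality by combining a pointwise-in-time interpolation estimate (coming from the reiteration theorem for complex interpolation) with Hölder's inequality in the time variable.

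First, I would use reiteration to identify $V_{\hat\beta}$ with a complex interpolation space between $H$ and $V$. Since $H$ is the pivot space of the Gelfand triple $(V,H,V^*)$, the standard identification gives $H=[V^*,V]_{1/2}$, and by the reiteration theorem for the complex method,
\begin{equation*}
V_{\hat\beta} = [V^*,V]_{\hat\beta} = \bigl[[V^*,V]_{1/2},[V^*,V]_1\bigr]_{2\hat\beta-1} = [H,V]_{2\hat\beta-1},
\end{equation*}
valid because $\hat\beta\in(1/2,1]$ corresponds to $2\hat\beta-1\in(0,1]$. Applying the interpolation inequality \eqref{eq: interpol est} (now for the couple $(H,V)$ rather than $(V^*,V)$) yields, for a.e.\ $t\in(0,T)$,
\begin{equation*}
\|u(t)\|_{V_{\hat\beta}} \leq \|u(t)\|_H^{2-2\hat\beta}\|u(t)\|_V^{2\hat\beta-1}.
\end{equation*}

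Next, I would raise this estimate to the power $p\coloneqq 2/(2\hat\beta-1)$ and integrate over $[0,T]$. Pulling the $L^\infty_t$ factor out gives
\begin{equation*}
\int_0^T \|u(t)\|_{V_{\hat\beta}}^{p}\dd t \leq \|u\|_{L^\infty(0,T;H)}^{(2-2\hat\beta)p} \int_0^T\|u(t)\|_V^{2}\dd t,
\end{equation*}
and taking the $p$-th root recovers the first inequality of the lemma.

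The second inequality is then an elementary consequence of Young's inequality in the form $a^r b^s \leq (a+b)^{r+s}$ valid for $a,b\geq 0$, $r,s\geq 0$: applied with $r=2-2\hat\beta\in[0,1)$, $s=2\hat\beta-1\in(0,1]$ and $r+s=1$ to $a=\|u\|_{C([0,T];H)}$, $b=\|u\|_{L^2(0,T;V)}$, this gives exactly $\|u\|_{\MR(0,T)}$ on the right. The only subtle ingredient is the reiteration step; the remainder is a direct computation, so I do not anticipate a real obstacle.
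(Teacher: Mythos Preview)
Your proof is correct and is precisely the standard argument: pointwise complex interpolation via $V_{\hat\beta}=[H,V]_{2\hat\beta-1}$, then integrating in time. The paper does not spell this out but simply cites \cite[Lem.\ 2.5, (5.48)]{BGV}, so your proposal supplies exactly the computation behind that reference.
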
 

The next lemma covers estimates similar to \cite[Lem.\ 3.8 (iv)(v)]{TV24} for $\hat{F}$. Note that \ref{it:emb4} readily shows that $\hat{F}(u)\in L^{2/(1+2\alpha)}(0,T;V_{\alpha})$ whenever $u\in\MR(0,T)$. 

\begin{lemma}\label{lem: 3.8 extra F}
 Let   $\hat{F}=\sum_{i=1}^m \hat{F}_i$ and $m,\alpha_i,\hat{\beta}_i,\hat{\rho}_i$ be as in Assumption \ref{ass:critvarsettinglocal}\ref{it:hat F}.  
  Let $n\in\R_+$, $T>0$ and  $\sigma>0$. 
  There exist  constants $M_T,\hat{C}_{n,T},\tilde{C}_{n,T},C_{n,T,\sigma}$ non-decreasing in $T$ and $n$, and constants $\epsilon_i>0$,  such that
  \begin{enumerate}[label=\emph{(\roman*)},ref=\textup{(\roman*)}]
  \item \label{it:emb4} for all $u\in\MR(0,T)$ with $\|u\|_{C([0,T];H)}\leq n$ and $i\in\{1,\ldots,m\}$:
  \begin{equation*}\label{eq: estimate hat F}
  \|\hat{F}_i(u)\|_{L^{2/(1+2\alpha_i)}(0,T;V_{\alpha_i})}  
   \leq \hat{C}_{n,T}(1+\|u\|_{L^2(0,T;V)}^{1+2\alpha_i}), 
  \end{equation*}
  \item \label{it:emb5} for all $u,v\in\MR(0,T)$ with $\|u\|_{C([0,T];H)},\|v\|_{C([0,T];H)}\leq n$ and $i\in\{1,\ldots,m\}$:
  \begin{align*}\label{eq: estimate hat F difference} 
  &\|\hat{F}_i(u)-\hat{F}_i(v)\|_{L^{2/(1+2\alpha_i)}(0,T;V_{\alpha_i})} \\  
  &\qquad\leq \tilde{C}_{n,T} \Big(T^{\epsilon_i}+\|u\|_{L^{{2}/({2\hat{\beta}_i-1})}(0,T;V_{\hat{\beta}_i})}^{\hat{\rho}_i}+\|v\|_{L^{{2}/({2\hat{\beta}_i-1})}(0,T;V_{\hat{\beta}_i})}^{ \hat{\rho}_i}\Big)\|u-v\|_{L^{{2}/({2\hat{\beta}_i-1})}(0,T;V_{\hat{\beta}_i})}, 
  \end{align*}
  \item \label{it:emb6} for all $u,v\in\MR(0,T)$ with $\|u\|_{C([0,T];H)},\|v\|_{C([0,T];H)}\leq n$:
  \begin{align*} 
  \Big|\int_0^T\<\hat{F}(u)-\hat{F}(v),u-v\?\dd t   \Big|
  &\leq \hat{C}_{\sigma,n,T} \int_0^T  (1+\|u\|_V^2+ \|v\|_V^2)\|u-v\|_H^2\dd t+\sigma\|u-v\|_{L^2(0,T;V)}^2. 
  \end{align*} 
  \end{enumerate}
\end{lemma}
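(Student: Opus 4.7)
The three bounds all stem from the pointwise growth/Lipschitz estimates in Assumption \ref{ass:critvarsettinglocal}\ref{it:hat F}, combined with H\"older's inequality in time and the critical interpolation estimate of Lemma \ref{lem:crit interpol}. Since the estimates decouple over the summation index, the plan is to fix one $i$ throughout. The $\alpha$-(sub)criticality condition \eqref{eq: subcrit cond alpha} turns out to be exactly the matching condition that makes the relevant exponents line up in each case.

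For \emph{(i)}, the plan is to take the $L^{2/(1+2\alpha_i)}$-norm in time of the pointwise growth bound, which reduces to controlling $\|u(\cdot)\|_{\hat\beta_i}$ in $L^{2(\hat\rho_i+1)/(1+2\alpha_i)}(0,T)$. Lemma \ref{lem:crit interpol} furnishes $L^{2/(2\hat\beta_i-1)}(0,T;V_{\hat\beta_i})$, and \eqref{eq: subcrit cond alpha} is precisely $2(\hat\rho_i+1)/(1+2\alpha_i)\leq 2/(2\hat\beta_i-1)$, so H\"older in time absorbs the mismatch; using the pointwise interpolation $\|u(t)\|_{\hat\beta_i}\leq n^{2-2\hat\beta_i}\|u(t)\|_V^{2\hat\beta_i-1}$ and the elementary bound $a^q\leq 1+a^{1+2\alpha_i}$ for $0\leq q\leq 1+2\alpha_i$ yields the claim. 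For \emph{(ii)}, the same strategy applies: H\"older in time with one factor $L^{2/(2\hat\beta_i-1)}$ carrying $\|u-v\|_{\hat\beta_i}$ and its $L^{2/(1+2\alpha_i)}$-conjugate $L^r$, $r=1/(1+\alpha_i-\hat\beta_i)$, carrying $1+\|u\|_{\hat\beta_i}^{\hat\rho_i}+\|v\|_{\hat\beta_i}^{\hat\rho_i}$ does it; integrability of the latter via Lemma \ref{lem:crit interpol} requires $r\hat\rho_i\leq 2/(2\hat\beta_i-1)$, which rearranges to \eqref{eq: subcrit cond alpha}. The $T^{1/r}$ coming from the constant $1$ together with any slack in the integrability inequality yields the additive $T^{\epsilon_i}$.

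For \emph{(iii)}, the natural pairing is $V_{\alpha_i}\times V_{1-\alpha_i}\to\R$, legitimate since $\alpha_i\leq\tfrac12\leq 1-\alpha_i$ and $V\hookrightarrow V_{1-\alpha_i}$. Combining the Lipschitz bound with the two interpolations $\|u-v\|_{\hat\beta_i}\leq\|u-v\|_H^{2-2\hat\beta_i}\|u-v\|_V^{2\hat\beta_i-1}$ and $\|u-v\|_{1-\alpha_i}\leq\|u-v\|_H^{2\alpha_i}\|u-v\|_V^{1-2\alpha_i}$ produces a pointwise integrand of the shape
\[
(1+\|u\|_{\hat\beta_i}^{\hat\rho_i}+\|v\|_{\hat\beta_i}^{\hat\rho_i})\,\|u-v\|_H^{2(1+\alpha_i-\hat\beta_i)}\,\|u-v\|_V^{2(\hat\beta_i-\alpha_i)}.
\]
Young's inequality with conjugate exponents $1/(\hat\beta_i-\alpha_i)$ and $1/(1-\hat\beta_i+\alpha_i)$ — both strictly greater than $1$, since the boundary case $\hat\beta_i-\alpha_i=1$ would force $\hat\rho_i\leq 0$ via \eqref{eq: subcrit cond alpha} and contradict $\hat\rho_i>0$ — isolates exactly $\sigma\|u-v\|_V^2$. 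In the remaining term the $\|u-v\|_H$ exponent collapses to $2$, and expanding $\|u\|_{\hat\beta_i}\leq n^{2-2\hat\beta_i}\|u\|_V^{2\hat\beta_i-1}$ leaves $\|u\|_V$ at power $(2\hat\beta_i-1)\hat\rho_i/(1-\hat\beta_i+\alpha_i)$, which is $\leq 2$ precisely by \eqref{eq: subcrit cond alpha}; $a^q\leq 1+a^2$ then closes the bound. Integrating in $t$ and summing over $i$ produces (iii).

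The main obstacle is the exponent bookkeeping in \emph{(iii)}: the chain of interpolation, Young, and interpolation has to land exactly on $\|u-v\|_H^2$ accompanied by $\|u\|_V,\|v\|_V$ at a power $\leq 2$. This works only by invoking \eqref{eq: subcrit cond alpha} at each matching step, and the degenerate case excluded by the strict inequality $\hat\rho_i > 0$ needs to be explicitly noted to validate the use of Young's inequality.
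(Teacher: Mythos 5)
Your proposal is correct and follows essentially the same route as the paper's proof: growth/Lipschitz bounds plus H\"older in time plus the critical interpolation of Lemma~\ref{lem:crit interpol} for (i)--(ii), and the duality pairing $V_{\alpha_i}\times V_{1-\alpha_i}$ plus two interpolations plus Young for (iii), with the $\alpha$-(sub)criticality condition making the exponents match at each step. The only (inconsequential) deviation is in (ii): the paper H\"olders with exponents $q=2(\hat\rho_i+1)/(1+2\alpha_i)$, $q'=q/\hat\rho_i$ (tailored to the growth exponent), whereas you put $\|u-v\|_{\hat\beta_i}$ directly into $L^{2/(2\hat\beta_i-1)}$ and its $L^{2/(1+2\alpha_i)}$-conjugate $L^{1/(1+\alpha_i-\hat\beta_i)}$; this simply produces a different (but still positive) value of $\epsilon_i$. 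Your explicit observation that $\hat\rho_i>0$ together with \eqref{eq: subcrit cond alpha} rules out the degenerate Young exponent in (iii) is exactly the remark the paper makes at the end of its proof of that part.
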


\begin{proof}
Note that the constants $C_{n,T}$ in Assumption \ref{ass:critvarsettinglocal}\ref{it:hat F} can be taken non-decreasing in $T$ and $n$ without loss of generality. To omit subscripts, we provide the proof for $m=1$ and write $\alpha\ceqq \alpha_1$, $\hat{\beta}\ceqq\hat{\beta}_1$ and $\hat{\rho}\ceqq\hat{\rho}_1$. Applying the proof below for each $i$ and taking maxima or sums of the corresponding constants yields the case $m>1$. 
 
\ref{it:emb4}: Using subsequently Assumption \ref{ass:critvarsettinglocal}\ref{it:hat F}, H\"older's inequality, Lemma \ref{lem:crit interpol} and Young's inequality, we have
\begin{align*}
  \|\hat{F}(u)\|_{L^{2/(1+2\alpha)}(0,T;V_{\alpha})} 
  &\leq  C_{n,T} \big\|1+\|u\|_{\hat{\beta}}^{1+\hat{\rho}} \big\|_{L^{2/(1+2\alpha)}(0,T)} \\
   &=  {C}_{n,T}\big(T^{(1+2\alpha)/2}+\|u\|_{L^{2(1+\hat{\rho})/(1+2\alpha)}(0,T;V_{\hat{\beta}})}^{1+\hat{\rho}} \big)\\
   &\leq  \tilde{C}_{n,T}\big(1+\|u\|_{L^{2/(2\hat{\beta}-1)}(0,T;V_{\hat{\beta}})}^{1+\hat{\rho}} \big)\\
&\leq  \tilde{C}_{n,T}\big(1+\|u\|_{C([0,T];H)}^{(1+\hat{\rho})(2-2\hat{\beta})} \|u\|_{L^2(0,T;V)}^{(1+\hat{\rho})(2\hat{\beta}-1)} \big)\\
&\leq  \hat{C}_{n,T}\big(1+  \|u\|_{L^2(0,T;V)}^{1+2\alpha} \big),   
\end{align*}
where all constants are non-decreasing in $T$ and $n$. 
 
\ref{it:emb5}: 
Let $q=2(\hat{\rho}+1)/(1+2\alpha)$, $q' =2(\hat{\rho}+1)/((1+2\alpha)\hat{\rho})$ and note that by the (sub)criticality condition, we have $q=\hat{\rho}q'\leq {2}/({2\hat{\beta}-1})$. Combined with Assumption \ref{ass:critvarsettinglocal}\ref{it:hat F}, H\"older's inequality, Lemma \ref{lem:crit interpol} and Young's inequality (using that $2\hat{\beta}-1\in(0,1]$), this gives
\begin{align*}
\|&\hat{F}(u)-\hat{F}(v)\|_{L^{2/(1+2\alpha)}(0,T;V_{\alpha})}\leq C_{n,T} \big\|\big(1+\|u\|_{\hat{\beta}}^{\hat{\rho}}+\|v\|_{\hat{\beta}}^{ \hat{\rho}}\big)\|u-v\|_{\hat{\beta}} \big\|_{L^{2/(1+2\alpha)}(0,T)} \\
&\leq {C}_{n,T} \Big(T^{1/q'}+\|u\|_{L^{\hat{\rho}q'}(0,T;V_{\hat{\beta}})}^{\hat{\rho}}+\|v\|_{L^{\hat{\rho}q'}(0,T;V_{\hat{\beta}})}^{ \hat{\rho}}\Big)\|u-v\|_{L^{q}(0,T;V_{\hat{\beta}})}\\
&\leq \tilde{C}_{n,T} \Big(T^{1/q'}+\|u\|_{L^{{2}/({2\hat{\beta}-1})}(0,T;V_{\hat{\beta}})}^{\hat{\rho}}+\|v\|_{L^{{2}/({2\hat{\beta}-1})}(0,T;V_{\hat{\beta}})}^{ \hat{\rho}}\Big)\|u-v\|_{L^{{2}/({2\hat{\beta}-1})}(0,T;V_{\hat{\beta}})} 
\end{align*} 
where $C_{n,T}$ and $\tilde{C}_{n,T}$ can be chosen non-decreasing in $T$ and $n$. 

\ref{it:emb6}:  
We use the following complex interpolation estimates for $x\in V$, $y\in V_\alpha$ and $\theta\in[1/2,1]$:  
\begin{equation}\label{eq:interpol est alpha pair}
\begin{split}
&\|x\|_{\theta}\leq \|x\|_H^{2-2\theta}\|x\|_V^{2\theta-1}, \\   
&\<y,x\? \leq   \|y\|_{\alpha}\|x\|_{H}^{2\alpha}\|x\|_V^{1-2\alpha}, 
\end{split}
\end{equation}
where we use that $V_\theta=[H,V]_{2\theta-1}$ for $\theta\in[1/2,1]$. 
The   estimates of \eqref{eq:interpol est alpha pair}, Assumption \ref{ass:critvarsettinglocal}\ref{it:hat F} and Young's inequality  with $q= (1+\alpha-\hat{\beta})^{-1}$, $q'= (-\alpha+\hat{\beta})^{-1}$ yield
  \begin{align*} 
   \Big|\int_0^T\<\hat{F}(u)-\hat{F}(v),u-v\?\dd t    \Big|
  &\leq   \int_0^T \|\hat{F}(u)-\hat{F}(v)\|_\alpha\|u-v\|_H^{2\alpha}\|u-v\|_V^{1-2\alpha} \dd t \\
  &\leq  {C}_{n,T} \int_0^T  (1+\|u\|_{\hat{\beta}}^{\hat{\rho}}+ \|v\|_{\hat{\beta}}^{\hat{\rho}})\|u-v\|_H^{2\alpha+2-2\hat{\beta}}\|u-v\|_V^{-2\alpha+2\hat{\beta}} \dd t \\
  &\leq  \tilde{C}_{\sigma,n,T} \int_0^T  (1+\|u\|_{\hat{\beta}}^{\hat{\rho}q}+ \|v\|_{\hat{\beta}}^{\hat{\rho}q}) \|u-v\|_H^{2} \dd t +\sigma\|u-v\|_{L^2(0,T;V)}^2 \\
   &\leq \hat{C}_{\sigma,n,T} \int_0^T  (1+\|u\|_V^2+ \|v\|_V^2)\|u-v\|_H^2\dd t+\sigma\|u-v\|_{L^2(0,T;V)}^2. 
  \end{align*} 
 In the last line we  combine \eqref{eq:interpol est alpha pair} with the fact that $2\hat{\beta}-1\in(0,1]$ and 
$\hat{\rho}q\leq  {2}/({2\hat{\beta}-1})$ by virtue of  \eqref{eq: subcrit cond alpha}, and we use that    $\|u\|_{C([0,T];H)}\leq n$. Also, we note that $q\in(1,\infty)$, which is ensured by  \eqref{eq: subcrit cond alpha} and the fact that $\hat{\rho}>0$, $\alpha\geq 0$ and $\hat{\beta}\leq 1$. 
\end{proof}

\begin{proof}[Proof of Theorem \ref{th: local well posedness skeleton}] 
We follow \cite[\S3.2]{TV24}. Note that $\tilde{A}$ in the latter reference is the same as our $\breve{A}$ in \eqref{eq: local well-posed}. 
  For $v_0\in H$,  let $z_{v_0}\in \MR(0,T)$ denote the strong solution to  
\begin{equation}\label{eq:def z_v_0}
  \begin{cases}
    &z'+\breve{A}(u_0)z=0\quad \text{ on } [0,T], \\
    &z(0)=v_0, 
  \end{cases}
\end{equation}
which exists uniquely due to Corollary \ref{cor: skeleton MR linearized crit var setting}.  
Fix some $T_1\in(0,T]$ such that
\begin{equation*} 
  \|z_{u_0}-u_0\|_{C([0,T_1];H)}\leq 1/3. 
\end{equation*} 
Such $T_1$ exists since $z_{u_0}(0)=u_0$ and $z_{u_0}\in \MR(0,T)$. 
For $v_0\in H$, $r>0$ and $\tilde{T}\in[0,T]$, we use the following complete subspace of $\MR(0,\tilde{T})$: 
\begin{equation*} 
Z_{r,\tilde{T}}(v_0)\coloneqq \{v\in \MR(0,\tilde{T}):v(0)=v_0, \|v-z_{u_0}\|_{\MR(0,\tilde{T})}\leq r\}.
\end{equation*}
By \cite[Lem.\ 3.10]{TV24}, we can fix $\eps_1,r_1>0$ such that for every  $\eps\in(0,\eps_1]$, $r\in(0,r_1]$, $\tilde{T}\in(0,T_1]$: 
\begin{equation}\label{eq:eps1 r1 property}
   v_0\in B_H(u_0,\eps) \text{ and }v\in Z_{r,\tilde{T}}(v_0)\implies \|v-u_0\|_{C([0,\tilde{T}];H)}\leq 1.  
\end{equation}
Now we prove the following: 
\begin{enumerate}[label=\textit{(\roman*)},ref=\textit{(\roman*)}]
  \item \label{it: estimate tilde f v}  For any  $\tilde{T}\in(0,T_1]$, $\eps\in(0,\eps_1]$, $r\in(0,r_1]$, $v_0\in B_H(u_0,\eps)$, $v\in Z_{r,\tilde{T}}(v_0)$:
\begin{align*}
&\textstyle{\sum_{i=1}^m}\| \hat{F}_i(v)+\hat{f}_i\|_{L^{2/(1+2\alpha_i)}(0,\tilde{T};V_{\alpha_i})}\leq \alpha_{T_1}(\tilde{T})+\beta_{T_1}(\tilde{T},r)r,  
\end{align*}
  where $\alpha_{T_1}(\tilde{T}),\beta_{T_1}(\tilde{T},r)\downarrow 0$ as $\tilde{T},r\downarrow0$, and $\alpha_{T_1}(\tilde{T})$ and $\beta_{T_1}(\tilde{T},r)$ are independent of $v_0$ and $v$.
\item\label{it: estimate tilde f v-w} For any $\tilde{T}\in(0,T_1]$, $\eps\in(0,\eps_1]$, $r\in(0,r_1]$, $v_0, w_0\in B_H(u_0,\eps)$, $v\in Z_{r,\tilde{T}}(v_0)$, $w\in Z_{r,\tilde{T}}(w_0)$ and $\sigma>0$:
\begin{align*} 
&\textstyle{\sum_{i=1}^m}\|\hat{F}_i(v)-\hat{F}_i(w)\|_{L^{2/(1+2\alpha_i)}(0,\tilde{T};V_{\alpha_i})} \leq \gamma_{T_1}(\tilde{T},r) \|v-w\|_{\MR(0,\tilde{T})}, 
\end{align*}
where $\gamma_{T_1}(\tilde{T},r)\downarrow 0$ as $\tilde{T},r\downarrow0$, and  $\gamma_{T_1}(\tilde{T},r)$ is independent of $v_0,w_0,v$ and $w$.
\end{enumerate}

Proof of \ref{it: estimate tilde f v}: Fix $n\ceqq 1+\|u_0\|_H$. 
Note that $\|v\|_{C([0,\tilde{T}];H)}\leq \|v-{u_0}\|_{\MR(0,\tilde{T})}+\|u_0\|_H\leq  n$ by \eqref{eq:eps1 r1 property}. 
By Lemmas \ref{lem: 3.8 extra F} and \ref{lem:crit interpol}, and using that $\tilde{C}_{n,\tilde{T}}$ from Lemma \ref{lem: 3.8 extra F} is non-decreasing in $\tilde{T}$,  
\begin{align*}
 & \| \hat{F}_i(v)+\hat{f}_i\|_{L^{2/(1+2\alpha_i)}(0,\tilde{T};V_{\alpha_i})}\\
 &\leq \|\hat{F}_i(v)-\hat{F}_i(z_{u_0})\|_{L^{2/(1+2\alpha_i)}(0,\tilde{T};V_{\alpha_i})}+\|\hat{F}_i(z_{u_0}) \|_{L^{2/(1+2\alpha_i)}(0,\tilde{T};V_{\alpha_i})}+\|\hat{f}_i\|_{L^{2/(1+2\alpha_i)}(0,\tilde{T};V_{\alpha_i})}\\ 
&\leq \tilde{C}_{n,T_1} \Big(\tilde{T}^{\epsilon_i}+\|v\|_{L^{{2}/({2\hat{\beta}_i-1})}(0,\tilde{T};V_{\hat{\beta}_i})}^{\hat{\rho}_i}+\|z_{u_0}\|_{L^{{2}/({2\hat{\beta}_i-1})}(0,\tilde{T};V_{\hat{\beta}_i})}^{ \hat{\rho}_i}\Big)\|v-z_{u_0}\|_{ L^{{2}/({2\hat{\beta}_i-1})}(0,\tilde{T};V_{\hat{\beta}_i})}+\phi_i(\tilde{T})\\
&\leq \tilde{C}_{n,T_1} \Big(\tilde{T}^{\epsilon_i}+\|v-z_{u_0}\|_{\MR(0,\tilde{T})}^{\hat{\rho}_i}+2\|z_{u_0}\|_{L^{{2}/({2\hat{\beta}_i-1})}(0,\tilde{T};V_{\hat{\beta}_i})}^{ \hat{\rho}_i}\Big)\|v-z_{u_0}\|_{\MR(0,\tilde{T})}+\phi_i(\tilde{T})\\ 
  &\leq \tilde{C}_{n,T_1} \Big(\tilde{T}^{\epsilon_i}+ r^{\hat{\rho}_i} +\phi_i(\tilde{T})\Big) r+\phi_i(\tilde{T})
\end{align*} 
with
$
\phi_i(\tilde{T})\ceqq \| \hat{F}_i(z_{u_0}) \|_{L^{2/(1+2\alpha_i)}(0,\tilde{T};V_{\alpha_i})}+\|\hat{f}_i\|_{L^{2/(1+2\alpha_i)}(0,\tilde{T};V_{\alpha_i})}+2\|z_{u_0}\|_{L^{{2}/({2\hat{\beta}_i-1})}(0,\tilde{T};V_{\hat{\beta}_i})}^{ \hat{\rho}_i}.
$ 
Recalling that $\epsilon_i,\hat{\rho}_i>0$ and summing over $i$, the estimate is already of the claimed form. 

Proof of \ref{it: estimate tilde f v-w}: Again we put $n\ceqq 1+\|u_0\|_H$. Similar to the above, by Lemmas \ref{lem: 3.8 extra F} and \ref{lem:crit interpol}:
\begin{align*} 
&\|\hat{F}_i(v)-\hat{F}_i(w)\|_{L^{2/(1+2\alpha_i)}(0,\tilde{T};V_{\alpha_i})} \\
&\leq \tilde{C}_{n,T_1} \Big(\tilde{T}^{\epsilon_i}+\|v-z_{u_0}\|_{\MR(0,\tilde{T})}^{ \hat{\rho}_i}+\|w-z_{u_0}\|_{\MR(0,\tilde{T})}^{ \hat{\rho}_i}+2\|z_{u_0}\|_{L^{{2}/({2\hat{\beta}_i-1})}(0,\tilde{T};V_{\hat{\beta}})}^{\hat{\rho}_i}\Big)\|v-w\|_{\MR(0,\tilde{T})}\\
&\leq \tilde{C}_{n,T_1} \Big(\tilde{T}^{\epsilon_i}+2r^{ \hat{\rho}_i}+2\|z_{u_0}\|_{L^{{2}/({2\hat{\beta}_i-1})}(0,\tilde{T};V_{\hat{\beta}_i})}^{\hat{\rho}_i}\Big)\|v-w\|_{\MR(0,\tilde{T})},
\end{align*}
which is again of the claimed form after summing over $i$. 

Having proved the tools above, we  can now continue the  proof of  
Theorem \ref{th: local well posedness skeleton} in line with the proof of \cite[Th.\ 3.7]{TV24}.  We define 
\[
\breve{F}(v)=\hat{F}(v)+\hat{f}+(G(v)+g)\psi 
\] 
and note that $\breve{F}$ corresponds to $\tilde{F}$ from \cite[\S3.2]{TV24} if we replace $F$ and $f$ in the latter by $\hat{F}$ and $\hat{f}$. 
Then we define $\Psi_{v_0}\colon \MR(0,\tilde{T})\to \MR(0,\tilde{T})$   by $\Psi_{v_0}(v)\coloneqq u$, where $u$ is the strong solution to
\begin{equation}\label{eq:contraction map def}
  \begin{cases}
    &u'+\breve{A}(u_0)u=(\breve{A}(u_0)-\breve{A}(v))v+\breve{F}(v) \quad \text{on } [0,\tilde{T}],\\
    &u(0)=v_0.
  \end{cases}
\end{equation}
Such $u$ exists uniquely thanks to Corollary \ref{cor: skeleton MR linearized crit var setting}, \cite[Lem.\ 3.8]{TV24} and Lemma \ref{lem: 3.8 extra F}. 
Now, for existence of a   strong solution to \eqref{eq: local well-posed}, it suffices to show that $\Psi_{v_0}$ is strictly contractive.

To do so, one can copy the proof of \cite[Th.\ 3.7]{TV24}, noting that compared to \cite[(3.12)]{TV24}, the right-hand side in \eqref{eq:contraction map def} contains the terms  $\hat{F}(v)$ and $\hat{f}$ (instead of $F$ and $f$), and \ref{it: estimate tilde f v} and \ref{it: estimate tilde f v-w} above provide exactly the required estimates to handle these. Thus we obtain existence of solutions and the claimed estimate, for all small enough $r,\tilde{T}$ and $\eps$.  

For uniqueness amongst solutions in $\MR(0,\tilde{T})$, one can copy the last part of the proof of \cite[Th.\ 3.7]{TV24}, replacing $\tilde{F}(v)$ by $\breve{F}(v)$. 
\end{proof}

Having established local well-posedness for \eqref{eq:skeleton}, we will next provide criteria that ensure global well-posedness. For this, we use the notion of \emph{maximal solution}.

\begin{definition}\label{def: maximal solution}
For $T\in(0,\infty]$, let 
$\MR_{\mathrm{loc}}(0,T)\coloneqq \{u\colon [0,T)\to H: u|_{[0,\tilde{T}]}\in\MR(0,\tilde{T}) \text{ for all } \tilde{T}\in[0,T)\}.$ 
A \emph{maximal solution} to \eqref{eq:skeleton} is a pair $(u_*,T_*)\in \MR_{\mathrm{loc}}(0,T_*) \times (0,\infty]$  such that
\begin{enumerate}[label=(\roman*)]
  \item for all $T\in(0,T_*)$, $u_*|_{[0,T]}$ is a strong solution to \eqref{eq:skeleton},
  \item for any $T>0$ and for any strong solution $u\in\MR(0,T)$ to \eqref{eq:skeleton} it holds that $T\leq T_*$ and $u=u_*$ on $[0,T]$.
\end{enumerate}
\end{definition}

The next statement about a maximal solution and blow-up criterion for the skeleton equation follows from the proof of  \cite[Prop.\ 3.15]{TV24}, with  Theorem \ref{th: local well posedness skeleton} replacing  \cite[Th.\ 3.7]{TV24}.  
\begin{proposition}[Blow-up criterion]\label{prop: blow up criterion}
  Let   $x\in H$ and let $\psi\in L^2_{\mathrm{loc}}(\R_+;U)$. Suppose that $(A,B)$ satisfies Assumption  \ref{ass:critvarsettinglocal}. Then  equation \eqref{eq:skeleton} has a unique maximal solution $(u_*,T_*)$. Moreover, if $T_*<\infty$ and $\sup_{T\in[0,T_*)}\|u_*\|_{L^2(0,T;V)}<\infty$, then $\lim_{t\uparrow T_*}u_*(t)$ does not exist in $H$.
\end{proposition}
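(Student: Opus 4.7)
The plan is to first construct the maximal solution by standard gluing and then prove the blow-up criterion by contradiction. By Theorem \ref{th: local well posedness skeleton}, for any initial datum in $H$ a unique local strong solution exists, and uniqueness on overlapping time intervals lets us glue two solutions consistently. Defining $T_*\ceqq\sup\{T>0:\text{a strong solution to \eqref{eq:skeleton} exists on }[0,T]\}$ and $u_*(t)$ to be the common value at $t$ of any strong solution defined past $t$, one obtains the unique maximal solution $(u_*,T_*)$.

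For the blow-up criterion I argue by contradiction. Suppose that $T_*<\infty$, $M\ceqq \sup_{T\in[0,T_*)}\|u_*\|_{L^2(0,T;V)}<\infty$, and that $u_\infty\ceqq \lim_{t\uparrow T_*}u_*(t)$ exists in $H$. First, extend $u_*$ to $\tilde{u}\in C([0,T_*];H)$ by setting $\tilde{u}(T_*)\ceqq u_\infty$. Since $\tilde{u}|_{[0,T]}\in L^2(0,T;V)$ with norm at most $M$ for each $T<T_*$, monotone convergence gives $\tilde{u}\in L^2(0,T_*;V)$, hence $\tilde{u}\in\MR(0,T_*)$.

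Next, verify that $\tilde{u}$ is a strong solution on the closed interval $[0,T_*]$. Using the growth bounds on $A_0,B_0,G$ from Assumption \ref{ass:critvarsettinglocal}\ref{it:growth AB} together with Lemma \ref{lem: 3.8 extra F}\ref{it:emb4} and Lemma \ref{lem:crit interpol}, all the terms $A_0(\cdot,\tilde{u})\tilde{u}$, $\hat{F}(\cdot,\tilde{u})$, $\hat{f}$, $B_0(\cdot,\tilde{u})\tilde{u}\psi$, $G(\cdot,\tilde{u})\psi$ and $g\psi$ belong to $S_{T_*}=L^2(0,T_*;V^*)+L^1(0,T_*;H)$, and hence to $L^1(0,T_*;V^*)$. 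For $t<T_*$, $u_*$ satisfies the $V^*$-valued integral equation associated with \eqref{eq:skeleton}; passing $t\uparrow T_*$ via dominated convergence in $V^*$ shows that the same identity holds at $t=T_*$, so $\tilde{u}$ is a strong solution on $[0,T_*]$.

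Finally, apply Theorem \ref{th: local well posedness skeleton} to the shifted skeleton equation, with coefficients $(s,v)\mapsto (A(T_*+s,v),B(T_*+s,v))$, control $\psi(T_*+\cdot)$ and initial datum $u_\infty\in H$; this yields a strong solution $w\in\MR(0,\delta)$ for some $\delta>0$. Concatenating by $\hat{u}(t)\ceqq \tilde{u}(t)$ for $t\leq T_*$ and $\hat{u}(t)\ceqq w(t-T_*)$ for $t\in[T_*,T_*+\delta]$ produces a strong solution on $[0,T_*+\delta]$, contradicting the maximality of $T_*$. The main obstacle is the middle step, namely verifying that $\tilde{u}$ is a genuine strong solution on the \emph{closed} interval $[0,T_*]$: this requires that the a priori $L^2(V)$-bound combined with the $\alpha$-(sub)criticality condition \eqref{eq: subcrit cond alpha} gives enough integrability of the nonlinear terms up to the blow-up time to justify taking the limit in the integral equation, which is precisely what Lemma \ref{lem: 3.8 extra F} together with Lemma \ref{lem:crit interpol} deliver.
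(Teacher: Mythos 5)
Your proof is correct and follows the standard continuation argument. The paper's own proof of this proposition is just a pointer to the proof of \cite[Prop.\ 3.15]{TV24} with Theorem \ref{th: local well posedness skeleton} substituted for the corresponding local well-posedness result there, and the continuation strategy you lay out — construct the maximal solution by gluing, then argue by contradiction: extend $u_*$ by $u_\infty$ to the closed interval, verify the extension is a strong solution on $[0,T_*]$ via the a~priori $L^2(V)$ and $C([0,T_*];H)$ bounds together with Lemma \ref{lem: 3.8 extra F}\ref{it:emb4} and Lemma \ref{lem:crit interpol} (which make all drift terms land in $S_{T_*}\into L^1(0,T_*;V^*)$), restart at $T_*$ by local well-posedness, concatenate, and contradict maximality — is exactly the argument the reference carries out. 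Two minor remarks: the step you call ``dominated convergence in $V^*$'' is more precisely absolute continuity of the Bochner integral (once the integrands are in $L^1(0,T_*;V^*)$, the partial integrals converge automatically as $t\uparrow T_*$), and when you invoke Theorem \ref{th: local well posedness skeleton} for the time-shifted coefficients it is worth noting that Assumption \ref{ass:critvarsettinglocal} is stated for all $T>0$ and hence is indeed stable under the shift $t\mapsto T_*+t$. Neither affects the validity of the argument.
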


\section{Proof of Theorem \ref{th:LDP general}}\label{sec: proofs}
 
In this section, we provide the proof of the LDP result Theorem \ref{th:LDP general}. 
As a preliminary step, we establish some  consequences of Assumption \ref{ass:coer replace} \ref{it:1}--\ref{it:3}. 
\begin{lemma}\label{lem:consequences ass I-III} 
Let $(A,B)$ satisfy  Assumption  \ref{ass:critvarsettinglocal}.   

If Assumption \ref{ass:coer replace}\ref{it:2} holds, then:
\begin{enumerate}[label=\textit{(\Roman**)},ref=\textit{(\Roman**)}]
\setcounter{enumi}{1}
\item\label{it:gen2}    For every $\psi\in L^2(0,T;U)$, \eqref{eq:skeleton} has a unique strong solution $u^{\psi}$, and it satisfies   the energy estimate of Assumption \ref{ass:coer replace}\ref{it:2}. 
\end{enumerate} 

If Assumption \ref{ass:coer replace}\ref{it:1} and \ref{it:3} hold, then: 
\begin{enumerate}[label=\textit{(\Roman**)},ref=\textit{(\Roman**)}]
\setcounter{enumi}{2}
\item\label{it:gen3}      For every $T,K>0$ and for every collection $(\bphieps)_{\eps\in(0,\eps_0)}$ of predictable stochastic processes $\bphieps\col [0,T]\times \Om\to U$ with   $\|\bphieps\|_{L^2(0,T;U)}\leq K$ a.s., the following holds: for every $\eps\in(0,\eps_0)$, \eqref{eq:SPDE tilted} has a unique strong solution $X^\eps$, and 
    $\lim_{\gamma\to\infty}\sup_{\eps\in(0,\eps_0)}\P(\|X^\eps\|_{\MR(0,T)}>\gamma)=0$. 
    In addition,
    \begin{align} 
\lim_{\gamma\to\infty} \sup_{\eps\in(0,\eps_0)}\P(\|B(\cdot,X^\eps(\cdot))\|_{L^2(0,T;\UH)}>\gamma)=0.\label{eq:lim2}
\end{align}      
\end{enumerate}  
\end{lemma}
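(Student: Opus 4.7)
For \ref{it:gen2}, I would combine Proposition \ref{prop: blow up criterion} with the energy bound in Assumption \ref{ass:coer replace}\ref{it:2}. Proposition \ref{prop: blow up criterion} produces a unique maximal solution $(u_*,T_*)$; arguing by contradiction, assume $T_*<\infty$. For every $T\in (0,T_*)$, the restriction $u_*|_{[0,T]}$ is a strong solution on $[0,T]$, so Assumption \ref{ass:coer replace}\ref{it:2} and the monotonicity of $C_x$ give
\[
\|u_*\|_{\MR(0,T)}\leq C_x(T_*,\|\psi\|_{L^2(0,T_*;U)})\qquad\text{for all }T<T_*.
\]
In particular $\sup_{T<T_*}\|u_*\|_{L^2(0,T;V)}<\infty$, so Proposition \ref{prop: blow up criterion} forces $\lim_{t\uparrow T_*}u_*(t)$ to fail to exist in $H$. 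On the other hand, the uniform $\MR$-bound combined with the growth estimates in Assumption \ref{ass:critvarsettinglocal} controls the right-hand side of the integral form of \eqref{eq:skeleton} uniformly in $L^2(0,T_*;V^*)+L^1(0,T_*;H)$, which together with the $H$-energy control forces $u_*$ to be continuous up to $T_*$ in $H$. This contradiction yields $T_*=\infty$, and the asserted energy bound on $[0,T]$ is then just Assumption \ref{ass:coer replace}\ref{it:2} applied on $[0,T]$.

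For \ref{it:gen3}, I would reduce \eqref{eq:SPDE tilted} to \eqref{eq:SPDE} by a Girsanov change of measure. Since $\|\bphieps\|_{L^2(0,T;U)}\leq K$ almost surely, Novikov's condition is trivially satisfied and
\[
L_T^\eps\coloneqq \exp\Bigl(-\tfrac{1}{\sqrt{\eps}}\int_0^T\bphieps\,\dd W-\tfrac{1}{2\eps}\int_0^T\|\bphieps\|_U^2\,\dd s\Bigr)
\]
is a mean-one martingale. Defining $\qr^\eps\sim\P$ on $\F_T$ by $\dd\qr^\eps/\dd\P=L_T^\eps$, the process $\widetilde{W}^\eps(t)\coloneqq W(t)+\tfrac{1}{\sqrt{\eps}}\int_0^t\bphieps\,\dd s$ is a $U$-cylindrical Brownian motion for $(\F_t)_{t\geq 0}$ under $\qr^\eps$, and rewriting \eqref{eq:SPDE tilted} in terms of $\widetilde{W}^\eps$ yields exactly the untilted SPDE \eqref{eq:SPDE}. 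Assumption \ref{ass:coer replace}\ref{it:1} applied on $(\Om,\F,(\F_t)_{t\geq 0},\qr^\eps)$ with Brownian motion $\widetilde{W}^\eps$ therefore supplies a unique strong solution $X^\eps$, and the equivalence $\P\sim\qr^\eps$ transports existence and uniqueness back to $\P$. The uniform-in-$\eps$ bound in probability of $\|X^\eps\|_{\MR(0,T)}$ is then Assumption \ref{ass:coer replace}\ref{it:3}.

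To deduce \eqref{eq:lim2}, I would localize on the events $E_\gamma^\eps\coloneqq \{\|X^\eps\|_{\MR(0,T)}\leq \gamma\}$. On $E_\gamma^\eps$ one has $\|X^\eps(t)\|_H\leq \gamma$ for all $t$, so Assumption \ref{ass:critvarsettinglocal}\ref{it:growth AB} applies pathwise with $n=\gamma$, giving $\|B_0(\cdot,X^\eps)X^\eps\|_{L^2(0,T;\UH)}\leq C_{\gamma,T}\gamma$. The subcriticality condition $(1+\rho_i)(2\beta_i-1)\leq 1$ gives $2(\rho_i+1)\leq 2/(2\beta_i-1)$, so H\"older in time combined with the critical interpolation estimate of Lemma \ref{lem:crit interpol} produces a pathwise bound $\|G(\cdot,X^\eps)\|_{L^2(0,T;\UH)}\leq \widetilde{C}_{\gamma,T}$ on $E_\gamma^\eps$. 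Adding $\|g\|_{L^2(0,T;\UH)}$ yields a $\gamma$-dependent constant $\Lambda(\gamma)$ with $\|B(\cdot,X^\eps)\|_{L^2(0,T;\UH)}\leq \Lambda(\gamma)$ on $E_\gamma^\eps$, so \eqref{eq:lim2} follows from the first part of \ref{it:gen3}. The main obstacle is the continuity-at-$T_*$ step in \ref{it:gen2}: one must convert the uniform $\MR$-bound into genuine strong $H$-convergence at $T_*$ to contradict Proposition \ref{prop: blow up criterion}. The Girsanov reduction in \ref{it:gen3} is, by contrast, routine once the deterministic $L^2$-bound on $\bphieps$ removes any Novikov issue.
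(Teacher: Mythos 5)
Your argument for \ref{it:gen3} and \eqref{eq:lim2} is essentially the paper's: the paper phrases the Girsanov step via the Yamada--Watanabe solution map $\G^\eps$ applied to the shifted Brownian path, but your change-of-measure version under $\qr^\eps\sim\P$ is equivalent and equally valid, and the localization argument for \eqref{eq:lim2} matches the paper's.

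For \ref{it:gen2} there is a genuine gap, and you identify it yourself: the step where you claim that a uniform $\MR$-bound on $[0,T_*)$ together with an $S_{T_*}$-control on the right-hand side ``forces $u_*$ to be continuous up to $T_*$ in $H$'' is not a free implication. An $L^\infty(0,T_*;H)\cap L^2(0,T_*;V)$ bound plus $u_*'\in L^2(0,T_*;V^*)+L^1(0,T_*;H)$ does not by itself grant continuity in $H$ up to the closed endpoint without invoking the right trace/regularity theorem. The paper closes this gap by a short detour through the \emph{linear} theory: once $\sup_{T<T_*}\|u_*\|_{\MR(0,T)}<\infty$, freeze the nonlinearities and form inhomogeneities $\bar f_i:=\hat F_i(u_*)+\hat f_i\in L^{2/(1+2\alpha_i)}(0,T_*;V_{\alpha_i})$ and $\bar g:=G(u_*)+g\in L^2(0,T_*;\UH)$ (this membership is precisely what Lemma \ref{lem: 3.8 extra F} and \cite[Lem.\ 3.8(iv)]{TV24} deliver from the $\MR$-bound). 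Corollary \ref{cor: skeleton MR linearized crit var setting} then produces a genuine solution $u\in\MR(0,T_*)$ of the linear problem \eqref{eq:skeletonlinear} on the closed interval. Since $u_*$ solves the same linear problem on every $[0,T]$ with $T<T_*$, maximality forces $u_*=u$ on $[0,T_*)$, and the continuity of $u$ at $t=T_*$ transfers to $u_*$, contradicting the blow-up criterion of Proposition \ref{prop: blow up criterion}. This is the precise argument your sketch is missing: the continuity at $T_*$ comes from constructing an honest $\MR(0,T_*)$-solution of a linear surrogate equation and identifying it with $u_*$, not from the a priori bounds alone.
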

\begin{proof}
\ref{it:gen2}:  We need to verify existence and uniqueness of strong solutions to \eqref{eq:skeleton} (the energy estimate then   holds by assumption). Theorem \ref{th: local well posedness skeleton} and Proposition \ref{prop: blow up criterion}  readily yield local well-posedness, existence and uniqueness of a maximal solution and a blow-up criterion. 
Moreover,  the energy estimate of  \ref{it:2} implies that for any $T_*\in(0,\infty)$ and  for any strong solution $u$ to \eqref{eq:skeleton} on $[0,T_*]$: 
\begin{equation}\label{eq:no blow up}
    \sup_{T\in(0,T_*)}\|u\|_{\MR(0,T)}<\infty. 
\end{equation}
Now let $(u_*,T_*)$ be the unique maximal solution and suppose that $T^*<\infty$. Then we can consider $\bar{f}_i\ceqq \hat{F}_i(u_*)+\hat{f}_i$ for $i\in\{1,\ldots,m\}$ and $\bar{g}\ceqq G(u_*)+g$. By \cite[Lem.\ 3.8(iv)]{TV24} and Lemma \ref{lem: 3.8 extra F}, combined with \eqref{eq:no blow up}, we have  $\bar{g}\in L^2(0,T_*;\UH)$ and $\bar{f}_i\in L^{2/(1+2\alpha_i)}(0,T_*;V_{\alpha_i})$. Applying  
  Corollary \ref{cor: skeleton MR linearized crit var setting} with these inhomogeneities, we obtain a solution $u\in\MR(0,T_*)$ to the linear equation \eqref{eq:skeletonlinear}. Now,  $u_*$ solves the same equation on $(0,T)$ for every $T<\infty$, thus by maximality of $u_*$, it follows that $u_*=u$ on $[0,T_*)$. In particular, $\lim_{t\uparrow T_*}u_*(t)=\lim_{t\uparrow T_*}u(t)$ exists in $H$ (recall that $u\in \MR(0,T_*)$), contradicting the blow-up criterion from Proposition \ref{prop: blow up criterion}. Thus we must have $T^*=\infty$, i.e.\  global existence and uniqueness holds. 

\ref{it:gen3}: 
The Yamada-Watanabe theorem in \cite{rock08,T25} and \ref{it:1}  ensure  that for each $\eps>0$,   there exists a measurable It\^o map $\G^\eps$ such that $\G^\eps(W)$ is the unique strong solution to \eqref{eq:SPDE}, and such that  $\G^\eps(W(\cdot)+\eps^{-1/2}\int_0^\cdot \bphieps\dd s)$ (by Girsanov's theorem) solves \eqref{eq:SPDE tilted} uniquely. 

Now, the first limit was already stated in \ref{it:3}, so it remains to prove \eqref{eq:lim2}.  For this, we  use the first limit and the growth bounds for $B_0$ and $G$ in Assumption \ref{ass:critvarsettinglocal}.   Indeed, Assumption \ref{ass:critvarsettinglocal} implies \cite[Lem.\ 3.8 (ii)(iv)]{TV24}, giving with $n(u)\ceqq \|u\|_{\MR(0,T)}$: 
\begin{align*}
\|B(u)\|_{L^2(0,T;\UH)}&\leq \|B_0(u)u\|_{L^2(0,T;\UH)}+\|G(u)\|_{L^2(0,T;\UH)}+\|g\|_{L^2(0,T;\UH)}\\
&\leq C_{n(u),T}(1+n(u))n(u)+\tilde{C}_{n(u),T}(1+n(u))+\|g\|_{L^2(0,T;\UH)}. 
\end{align*}
Using that $C_{n ,T}$ and $\tilde{C}_{n,T}$ are non-decreasing in $n$, we obtain for all $N>0$, $\eps\in(0,\eps_0)$ and $\gamma>0$:
\begin{align*}
&\P(\|B(X^\eps)\|_{L^2(0,T;\UH)}>\gamma)\\
&\;\leq 
\P\big(\|B(X^\eps)\|_{L^2(0,T;\UH)}>\gamma, \|X^\eps\|_{\MR(0,T)}<N\big) +\P(\|X^\eps\|_{\MR(0,T)}>N)\\
&\;\leq \P\big(C_{N,T}(1+N)N+\tilde{C}_{N,T}(1+N)+\|g\|_{L^2(0,T;\UH)}>\gamma\big) +\sup_{\hat{\eps}\in(0,\eps_0)}\P(\|X^{\hat{\eps}}\|_{\MR(0,T)}>N).
\end{align*}
Now, for any $\delta>0$, using \ref{it:3} we  fix an $N$ such that the last term is less than $\delta$, then we take the supremum over $\eps\in(0,\eps_0)$, and then the limit $\gamma\to\infty$, leading to  $$\lim_{\gamma\to\infty}\sup_{{\eps}\in(0,\eps_0)}\P(\|B(X^\eps)\|_{L^2(0,T;\UH)}>\gamma)\leq 0+\delta.$$ 
We conclude that \eqref{eq:lim2} holds. 
\end{proof} 

The next proposition is based on \cite[Prop.\ 4.9]{TV24}. We use Assumption \ref{ass:coer replace}\ref{it:2} and Lemma \ref{lem:consequences ass I-III}\ref{it:gen2} to handle the missing coercivity, and  Lemma \ref{lem: 3.8 extra F} to treat the nonlinearity $\hat{F}$. 
 
\begin{proposition}\label{prop:cpt sublevel sets}
Suppose that Assumption \ref{ass:coer replace} holds. Let $T,K> 0$ and $x\in H$. Let $S_K\ceqq \{\psi\in L^2(0,T;U):\|\psi\|_{L^2(0,T;U)}\leq K\}$.  
Then the following map is continuous:  $(S_K,\mathrm{weak})\to \MR(0,T)\colon  \psi\mapsto u^{\psi}$, with $u^\psi$  the unique strong solution to \eqref{eq:skeleton}. 
\end{proposition}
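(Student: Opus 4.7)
The plan is to adapt \cite[Prop.\ 4.9]{TV24} to the present non-coercive, extended-critical setting: the coercivity used in \cite{TV24} is replaced by the a priori estimate furnished by Assumption \ref{ass:coer replace}\ref{it:2} together with Lemma \ref{lem:consequences ass I-III}\ref{it:gen2}, while the flexible nonlinearity $\hat F$ is accommodated through Lemma \ref{lem: 3.8 extra F}. Since weak $L^2(0,T;U)$-convergence is metrizable on the bounded set $S_K$, it suffices to prove sequential continuity. So I would fix $\psi_n\rightharpoonup\psi$ in $L^2(0,T;U)$ with $\|\psi_n\|_{L^2(0,T;U)}\leq K$, set $u_n\ceqq u^{\psi_n}$, $u\ceqq u^\psi$, and note that Lemma \ref{lem:consequences ass I-III}\ref{it:gen2} provides the uniform bound $\sup_n\|u_n\|_{\MR(0,T)}\leq C_x(T,K)$. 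By a standard subsequence-of-subsequence argument, it is then enough to extract from an arbitrary subsequence a further subsequence along which $u_n\to u$ in $\MR(0,T)$.

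Along a relabelled subsequence, the uniform bound yields $u_n\rightharpoonup \bar u$ weakly in $L^2(0,T;V)$ and weakly-$\ast$ in $L^\infty(0,T;H)$ for some $\bar u\in\MR(0,T)$. Inspecting the equation and using the growth estimates in Assumption \ref{ass:critvarsettinglocal}\ref{it:growth AB} together with Lemma \ref{lem: 3.8 extra F}\ref{it:emb4} and the bound on $\psi_n$, I would derive a uniform estimate on $\partial_t u_n$ in a sum space continuously embedded in $L^1(0,T;V^*)$; combined with the $L^2(V)$-bound this gives strong convergence $u_n\to\bar u$ in $C([0,T];V^*)$, and via interpolation with the $L^\infty(H)$-bound also in $L^q(0,T;V_\theta)$ for any $\theta\in[0,1/2)$ and suitable $q<\infty$.

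The core step is the identification $\bar u=u$ by passing to the limit in the weak form of \eqref{eq:skeleton}. The linear terms and the inhomogeneities are immediate; the terms in $A_0$, $\hat F$ and $G$ are controlled via the local Lipschitz estimates of Assumption \ref{ass:critvarsettinglocal}\ref{it:growth AB},\ref{it:hat F} and Lemma \ref{lem: 3.8 extra F}\ref{it:emb5} together with the strong convergences of the previous paragraph. I expect the main obstacle to be the bilinear-in-$(u_n,\psi_n)$ term $B(\cdot,u_n)\psi_n$: the weak-only convergence of $\psi_n$ has to be coupled with sufficient convergence of $u_n$. I would use the decomposition
\[
B(u_n)\psi_n-B(u)\psi=(B(u_n)-B(u))\psi_n+B(u)(\psi_n-\psi),
\]
estimate the first summand in $L^1(0,T;H)$ via the Lipschitz bounds of Assumption \ref{ass:critvarsettinglocal}\ref{it:growth AB} against the strong convergence of $u_n$ in $L^\infty(V_\theta)$, and treat the second summand by testing against a smooth $\phi$: the weak convergence then reduces to the scalar pairing $\langle\psi_n-\psi,B(u)^\ast\phi\rangle_{L^2(0,T;U)}\to 0$.

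Finally, once $\bar u=u$ is known, I would upgrade the convergence from weak to strong in $\MR(0,T)$ by applying the deterministic chain rule to $\|z_n\|_H^2$, where $z_n\ceqq u_n-u$. The resulting energy identity contains: a dissipation $\theta\|z_n\|_V^2$ coming from the coercivity of the linear part in Assumption \ref{ass:critvarsettinglocal}\ref{it:coercivelinear}; nonlinear error terms absorbed by Lemma \ref{lem: 3.8 extra F}\ref{it:emb6} and the Lipschitz bounds of Assumption \ref{ass:critvarsettinglocal}\ref{it:growth AB}, at the cost of a small fraction of $\|z_n\|_V^2$ and a locally integrable weight against $\|z_n\|_H^2$; and a residual driven by $\psi_n-\psi$, treated exactly as in the previous paragraph using the now-available strong convergence of $z_n$ in $C([0,T];V^*)$. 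A Grönwall inequality then delivers $\|z_n\|_{\MR(0,T)}\to 0$, closing the argument.
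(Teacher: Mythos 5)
The overall strategy in your final paragraph — reduce to an energy estimate for the difference $z_n\ceqq u^{\psi_n}-u^\psi$ and close via Gr\"onwall — is exactly what the paper does. However, the route you take to get there contains a genuine gap. You extract a weak limit $\bar u$ of $u_n$ and then claim that a uniform bound on $\partial_t u_n$ in a space embedding into $L^1(0,T;V^*)$, combined with the $L^2(V)\cap L^\infty(H)$-bound, yields \emph{strong} convergence $u_n\to\bar u$ in $C([0,T];V^*)$ (and then in $L^q(V_\theta)$ by interpolation). This is an Aubin--Lions/Simon-type compactness step, and it needs a compact embedding in the Gelfand triple. But the paper explicitly refrains from assuming any such compactness (see the Introduction and the remarks around Theorem \ref{th:LDP general}, where the absence of a compact embedding is advertised as a feature of the framework). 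In a Gelfand triple where $V\hookrightarrow H$ is not compact, the uniform bounds you have yield only weak(*) precompactness — a sequence of the form $u_n(t)=\phi(t)e_n$ with $(e_n)$ orthonormal gives a simple counterexample to strong $C([0,T];V^*)$-convergence. Consequently the identification $\bar u=u$, and the subsequent upgrade step (which invokes the compactness-derived $C([0,T];V^*)$-convergence to handle the residual $B(u)(\psi_n-\psi)$), do not go through in the paper's generality.

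The paper's proof avoids compactness entirely. It works with the full sequence $w_n\ceqq u^{\psi_n}-u^\psi$ (no subsequence extraction or limit identification), applies the chain rule to $\|w_n\|_H^2$, absorbs errors into the dissipation using Assumption \ref{ass:critvarsettinglocal}\ref{it:coercivelinear} and Lemma \ref{lem: 3.8 extra F}\ref{it:emb6}, and after Gr\"onwall arrives at $\|w_n\|_{\MR(0,T)}^2\lesssim \sup_{s\le T}|I_3^n(s)|$ with $I_3^n=\int_0^\cdot\langle b\,(\psi_n-\psi),w_n\rangle\dd s$. The ingredient that replaces your Aubin--Lions step is \cite[Lem.\ 4.8]{TV24}: an abstract lemma giving $\sup_s|I_3^n(s)|\to0$ from only the weak convergence of $\psi_n$, boundedness of $(w_n)$ in $C([0,T];H)$, and boundedness of the generalized time-derivative of $w_n$ in $L^1(0,T;V^*)$ — no compact embedding required. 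To repair your argument, delete the extraction/identification paragraphs and instead invoke that lemma directly on the residual term in your final energy estimate.
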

 
\begin{proof} 
Note that $\psi \mapsto u^{\psi}$ is well-defined due to Lemma \ref{lem:consequences ass I-III}\ref{it:gen2}. 
Suppose that $\psi_n \to \psi$ weakly in $L^2(0,T;U)$ and write $w_n\coloneqq u^{\psi_n}-u^\psi$. We show that $w_n\to 0$ in $\MR(0,T)$.
For each $n\in\N$, $w_n$ is a strong solution to
\begin{align*}
\begin{cases}
  &w_n'+\bar{A_0} w_n=f_n +\hat{F}(u^{\psi_n})-\hat{F}(u^\psi)+\big(\bar{B_0} w_n+g_n\big)\psi_n+{b}(\psi_n-\psi), \\
  &w_n(0)=0,
\end{cases}
\end{align*}
where $\bar{A_0}\coloneqq A_0 (u^\psi)$, $\bar{B_0}\coloneqq B_0(u^\psi)$ and we use the notation from the proof of \cite[Prop.\ 4.9]{TV24} in which we put $F=0$, i.e.:
\begin{align*}
f_n&\coloneqq (A_0(u^\psi)-A_0(u^{\psi_n}))u^{\psi_n}\in L^2(0,T;V^*),\\
g_n&\coloneqq -(B_0(u^\psi)-B_0(u^{\psi_n}))u^{\psi_n}+G(u^{\psi_n})-G(u^\psi)\in L^2(0,T;\UH),\\
{b}&\coloneqq B(u^\psi)=B_0(u^\psi) u^\psi+G(u^\psi)+g\in L^2(0,T;\UH).
\end{align*}
Compared to \cite[Prop.\ 4.9]{TV24}, only the part  $\hat{F}(u^{\psi_n})-\hat{F}(u^\psi)$ in the equation for $w_n$ is new. 
Therefore, by the chain rule \cite[(A.2)]{TV24} and defining $I_j^n(t)$ as in  \cite[(4.18)]{TV24}, we have one extra term: 
\begin{align}
 \frac{1}{2} \|w_n(t)\|_H^2 &=  I_1^n(t)+I_2^n(t)+I_3^n(t)+ \int_0^t\<\hat{F}(u^{\psi_n})-\hat{F}(u^\psi),w_n(s)\?\dd s. \label{eq: gronwall prep w_n}
\end{align}
Using Assumption \ref{ass:coer replace}\ref{it:2} and boundedness of $(\psi_n)$ in $L^2(0,T;U)$, we can define 
\begin{equation}\label{eq: def N}
N\coloneqq  \|u^\psi\|_{\MR(0,T)}+\sup_{n\in\N}\|u^{\psi_n}\|_{\MR(0,T)} <\infty.
\end{equation}  
By \cite[(4.18), (4.20), (4.23), p.\ 27]{TV24}, we have  
\begin{align*}
I_1^n(t)+I_2^n(t)&= \int_0^t  \<-\bar{A_0}w_n(s)+f_n(s),w_n(s)\?+\<(\bar{B_0}w_n(s)+g_n(s))\psi_n(s),w_n(s)\?\dd s\\ 
&\leq -\frac{\theta_{N,T}}{2} \|w_n\|_{L^2(0,t;V)}^2+\int_0^t \big(h_n(s)+M_{N,T}+\frac{1}{2}\|\psi_n(s)\|_U^2\big)\|w_n(s)\|_{H}^2\dd s, 
\end{align*}
where $(h_n)\subset L^1(0,T)$ satisfies $\sup_{n\in\N}\|h_n\|_{L^1(0,T)}<\infty$ and  $\theta_{N,T},M_{N,T}$ are the constants from Assumption \ref{ass:critvarsettinglocal}\ref{it:coercivelinear}. 
Moreover, for every $\sigma>0$, we have thanks to Lemma \ref{lem: 3.8 extra F}\ref{it:emb6}:
\begin{align*}
 \int_0^t\<\hat{F}(u^{\psi_n})-\hat{F}(u^\psi),w_n(s)\?\dd s   
  &\leq \hat{C}_{\sigma,N,T} \int_0^t  (1+\|u^{\psi_n}\|_V^2+ \|u^\psi\|_V^2)\|w_n\|_H^2\dd s+\sigma\|w_n\|_{L^2(0,T;V)}^2. 
\end{align*}
Hence, fixing $\sigma=\theta_{N,T}/4$ and recalling \eqref{eq: gronwall prep w_n}, we obtain 
\begin{equation*}
\|w_n(t)\|_{H}^2\leq -\frac{\theta_{N,T}}{2}\|w_n\|_{L^2(0,t;V)}^2 +2\int_0^t \big(\hat{h}_n(s)+M_{N,T}+\frac{1}{2}\|\psi_n(s)\|_U^2\big)\|w_n(s)\|_{H}^2\dd s+2\sup_{s\in[0,t]}|I_3^n(s)|, 
\end{equation*}
where $\hat{h}_n \ceqq h_n +\hat{C}_{\sigma,N,T}(1+\|u^{\psi_n}\|_V^2+ \|u^\psi\|_V^2)$ also satisfies  $\sup_{n\in\N}\|\hat{h}_n\|_{L^1(0,T)}<\infty$, due to \eqref{eq: def N}. 
Gronwall's inequality thus gives for all $n\in\N$:
\begin{equation}\label{eq: gronwall w_n}
 \|w_n\|_{\MR(0,T)}^2\lesssim_{N,T}  \sup_{s\in[0,t]}|I_3^n(s)|\exp(2\hat{\kappa}),
\end{equation}
with constant $\hat{\kappa}\coloneqq \sup_{n\in\N}(\|\hat{h}_n\|_{L^1(0,T)}+\frac{1}{2}\|\psi_n\|_{L^2(0,T;U)}^2)+M_{N,T}<\infty$. 

Finally, we will apply \cite[Lem.\ 4.8]{TV24} to $I_3^n= \int_0^\cdot \<b(s)(\psi_n(s)-\psi(s)),w_n(s)\?\dd s$ (see \cite[(4.18)]{TV24}) to obtain $w_n\to 0$ in $\MR(0,T)$, which combined with \eqref{eq: gronwall w_n} concludes the proof.  
To apply the latter, we must verify  boundedness of $(w_n)\subset C([0,T];H)$ and $(\hat{\alpha}_n)\subset L^1(0,T;V^*)$, where $\hat{\alpha}_n\ceqq \bar{\alpha}_n+\hat{F}(u^{\psi_n})-\hat{F}(u^\psi)$, with  
$\bar{\alpha}_n\coloneqq -\bar{A_0}w_n+f_n+(\bar{B_0}w_n+g_n)\psi_n+{b}(\psi_n-\psi)$. Boundedness of $(w_n)$ holds by \eqref{eq: def N} and boundedness of $(\bar{\alpha}_n)$ in $L^1(0,T;V^*)$ was already shown in the proof of \cite[Prop.\ 4.9]{TV24}. 
Thus, it suffices to observe that  Lemma \ref{lem: 3.8 extra F}\ref{it:emb4} and \eqref{eq: def N} imply that for $i\in\{1,\ldots,m\}$: $\sup_{n\in\N}\|\hat{F}_i(u^{\psi_n})-\hat{F}_i(u^\psi)\|_{L^{2/(1+2\alpha_i)}(0,T;V_{\alpha_i})}\leq 2\hat{C}_{N,T}(1+N^{1+2\alpha_i})<\infty$, and to recall that $L^{2/(1+2\alpha_i)}(0,T;V_{\alpha_i})\into L^1(0,T;V^*)$. 
\end{proof}

The next proposition is the analog of \cite[Prop.\ 4.12]{TV24}. We exploit Assumption \ref{ass:coer replace}\ref{it:1} and \ref{it:3}, Lemma \ref{lem:consequences ass I-III}\ref{it:gen3}   and Lemma \ref{lem: 3.8 extra F} to handle the missing coercivity and the extra nonlinearity $\hat{F}$ in the estimates. Note that  Lemma \ref{lem:consequences ass I-III} ensures the unique existence of $X^\eps$ and $u^\eps$ in the formulation below. 

\begin{proposition}\label{prop: X^eps weak conv}
Suppose that Assumption \ref{ass:coer replace} holds and let $T,K>0$ and $x\in H$. For $\eps\in(0,\eps_0)$, let $\bphieps\col [0,T]\times \Om\to U$  be a predictable stochastic process with   $\|\bphieps\|_{L^2(0,T;U)}\leq K$ a.s., and let $X^\eps$ be the strong solution to the stochastic control problem \eqref{eq:SPDE tilted} with $\bphieps$. 
Moreover, suppose that $u^\eps(\om)=u^{\Psi_\eps(\om)}$ for a.e.\ $\om\in\Om$, where $u^{\Psi_\eps(\om)}$  is the strong solution  to \eqref{eq:skeleton} with $\psi=\Psi^\eps(\om)$. 
Then $X^\eps- u^{\eps}\to 0$ in probability in $\MR(0,T)$ as $\eps\downarrow0$.
\end{proposition}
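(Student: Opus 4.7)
The plan is to establish $X^\eps - u^\eps \to 0$ in probability in $\MR(0,T)$ by a stopping-time localization combined with the variational It\^o formula and Gronwall's inequality. For fixed $N>0$, I would first introduce
\[
\tau^\eps_N \ceqq \inf\{t\in[0,T] : \|X^\eps\|_{\MR(0,t)} \vee \|u^\eps\|_{\MR(0,t)} > N\} \wedge T.
\]
By Lemma \ref{lem:consequences ass I-III}\ref{it:gen3} the family $\{\|X^\eps\|_{\MR(0,T)}\}_\eps$ is tight, while Assumption \ref{ass:coer replace}\ref{it:2}, applied $\om$-wise with $\|\Psi^\eps(\om)\|_{L^2(0,T;U)}\leq K$, bounds $\|u^\eps\|_{\MR(0,T)}$ by $C_x(T,K)$ almost surely. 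Hence $\sup_{\eps\in(0,\eps_0)}\P(\tau^\eps_N<T)$ can be made arbitrarily small by choosing $N$ large.

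Next, writing $Z^\eps \ceqq X^\eps - u^\eps$, the difference satisfies $Z^\eps(0)=0$ and
\[
\dd Z^\eps = -[A(\cdot,X^\eps)-A(\cdot,u^\eps)]\dd t + [B(\cdot,X^\eps)-B(\cdot,u^\eps)]\Psi^\eps\dd t + \sqrt{\eps}\,B(\cdot,X^\eps)\dd W.
\]
I would apply the variational chain rule from \cite[App.\ A]{TV24} to $\|Z^\eps(t\wedge\tau^\eps_N)\|_H^2$. The deterministic drift is then treated exactly as in the proof of Proposition \ref{prop:cpt sublevel sets}: coercivity of the linear part yields a reserve $\tfrac12 \theta_{N,T}\|Z^\eps\|_V^2$; the local Lipschitz bounds on $A_0,B_0,G$ combined with $\|\Psi^\eps\|_{L^2(0,T;U)}\leq K$ produce a Gronwall-type contribution $\int_0^{\cdot}\xi^\eps(s)\|Z^\eps\|_H^2\dd s$ with $\sup_\eps\|\xi^\eps\|_{L^1(0,T)}<\infty$ a.s.\ on the stopped interval; and Lemma \ref{lem: 3.8 extra F}\ref{it:emb6} applied with $\sigma = \theta_{N,T}/4$ absorbs the $\hat F$-difference into the reserve after augmenting $\xi^\eps$ by the $L^1(0,T)$-bounded weight $(1+\|X^\eps\|_V^2+\|u^\eps\|_V^2)$.

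The vanishing-noise terms are the novelty of this estimate. The It\^o correction $\eps\int_0^{\cdot\wedge\tau^\eps_N}\nn B(\cdot,X^\eps)\nn_H^2\dd s$ is bounded by $\eps\,C_{N,T}(1+N^2)$ via the growth bounds in Assumption \ref{ass:critvarsettinglocal}\ref{it:growth AB}. The martingale term is controlled with the Burkholder--Davis--Gundy inequality and Young's inequality, yielding $\E\sup_{t\leq T}|M^\eps(t\wedge\tau^\eps_N)| \leq \tfrac14\,\E\|Z^\eps\|_{C([0,\tau^\eps_N];H)}^2 + C_N\,\eps$. Absorbing the $\tfrac14$-term on the left-hand side and invoking a stochastic Gronwall lemma gives $\E\|Z^\eps\|_{\MR(0,\tau^\eps_N)}^2 \leq C_{N,T,K}\,\eps$, and Chebyshev combined with the decomposition $\P(\|Z^\eps\|_{\MR(0,T)}>\eta) \leq \P(\tau^\eps_N<T) + \eta^{-2}\,\E\|Z^\eps\|_{\MR(0,\tau^\eps_N)}^2$, choosing $N$ first and $\eps\downarrow 0$ afterwards, delivers convergence in probability.

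The main obstacle will be the $\hat F$-contribution, whose local Lipschitz constant lives in $V_{\hat\beta_i}$-norms rather than $H$: a naive estimate leaves a $V$-norm factor of $Z^\eps$ that cannot be absorbed by the coercivity reserve. The critical interpolation estimate in Lemma \ref{lem: 3.8 extra F}\ref{it:emb6}, which exploits the $\alpha$-(sub)criticality condition \eqref{eq: subcrit cond alpha}, is precisely what allows one to trade this term for a small multiple of $\|Z^\eps\|_{L^2(0,\cdot;V)}^2$ (absorbed by $\tfrac12\theta_{N,T}\|Z^\eps\|_V^2$) together with a Gronwall-friendly $\|Z^\eps\|_H^2$-piece. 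A secondary subtlety is that the unique existence of $X^\eps$ required to write down the It\^o formula is supplied by Lemma \ref{lem:consequences ass I-III}\ref{it:gen3} (via Girsanov), rather than by any coercivity of $(A,B)$.
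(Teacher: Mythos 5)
Your proposal takes a genuinely different route from the paper's proof, and the underlying strategy is sound, though one detail of the chain of estimates needs reordering.

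The paper does not use a stopping time or any $L^2(\Omega)$-moment. It fixes the event $E_{n,\eps}=\{\|X^\eps\|_{\MR(0,T)}\le n\}\cap\{\|u^\eps\|_{\MR(0,T)}\le N\}$, reduces to convergence on $E_{n,\eps}$ (with $n$ large) by Assumption \ref{ass:coer replace}\ref{it:3} and \eqref{eq: def esssup N}, and then argues \emph{pathwise} on $E_{n,\eps}$: a deterministic Gr\"onwall yields $\|X^\eps-u^\eps\|_{\MR(0,T)}^2\lesssim I_3^\eps(T)+\sup_t|I_4^\eps(t)|$, after which $I_3^\eps\to 0$ and $\sup_t|I_4^\eps|\to 0$ are established in probability, using \eqref{eq:lim2} for the It\^o correction and \cite[Prop.\ 18.6]{kallenberg21} for the martingale, which reduces uniform convergence in probability to convergence in probability of the quadratic variation. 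Crucially, no BDG and no expectations are taken, which is natural given that the a priori control on $X^\eps$ under Assumption \ref{ass:coer replace}\ref{it:3} is only in probability. Your stopping time $\tau^\eps_N$ achieves a similar localization, but buys you more: on $[0,\tau^\eps_N]$ all the relevant quantities ($\int_0^{\tau^\eps_N}\nn B(\cdot,X^\eps)\nn_H^2\dd s$, $\int_0^{\tau^\eps_N}(1+\|\Psi^\eps\|_U^2+\|X^\eps\|_V^2+\|u^\eps\|_V^2)\dd s$) are deterministically bounded, so an $L^2(\Omega)$-estimate via BDG becomes available and yields a quantitative rate $\E\|Z^\eps\|_{\MR(0,\tau^\eps_N)}^2\lesssim_N\eps$. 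That is a stronger and more explicit conclusion than the convergence-in-probability the paper extracts; both deliver the proposition.

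One caveat: the stated order \textquotedblleft BDG + Young, absorb, then (stochastic) Gr\"onwall\textquotedblright\ does not work as written, because after taking expectations the Gr\"onwall weight $(1+\|X^\eps\|_V^2+\|u^\eps\|_V^2+\|\Psi^\eps\|_U^2)$ sits inside the expectation as a random process and cannot be \textquotedblleft Gr\"onwalled\textquotedblright\ directly. The operative order is: (1) the pathwise It\^o estimate; (2) replace $M_t$ by the constant $\sup_{s\le \tau^\eps_N}M_s$ and apply \emph{deterministic} Gr\"onwall, which is legitimate because $\int_0^{\tau^\eps_N}\hat h\dd s$ is a.s.\ bounded by a fixed $\alpha_{N,K,T}$; (3) take expectations; (4) BDG and Young; (5) close via $X\le C\eps+C\sqrt\eps\,X^{1/2}\Rightarrow X\lesssim\eps$. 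Alternatively one may invoke a true stochastic Gr\"onwall lemma (as in \cite[Cor.\ 5.4]{geiss24}) that subsumes the martingale control, in which case the separate BDG step is redundant. Either way the idea works, but as stated the combination of tools is muddled. The rest — coercivity reserve, the treatment of $A_0,B_0,G$, and the use of Lemma \ref{lem: 3.8 extra F}\ref{it:emb6} with $\sigma=\theta_{N,T}/4$ to absorb the $\hat F$-difference — matches the paper's estimates on $E_{n,\eps}$.
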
 
\begin{proof}
By Assumption \ref{ass:coer replace}\ref{it:2}, we can define 
\begin{align}\label{eq: def esssup N}
N\coloneqq \esssup_{\om\in\Om}\sup_{\eps\in(0,\eps_0)}\|u^\eps(\om)\|_{\MR(0,T)}\leq C_{x}(T,K)<\infty,
\end{align}
and for $\eps\in(0,\eps_0)$ and $n\in\N$, we let 
\[
E_{n,\eps}\coloneqq \{\|X^\eps\|_{\MR(0,T)}\leq n\}\cap \{\|u^{\eps}\|_{\MR(0,T)}\leq N\}.
\]
For all $\eps\in(0,\eps_0)$ and $n\in\N$,  
\begin{align*}
\P(\|X^\eps- u^{\eps}\|_{\MR(0,T)}>\gamma)
&\leq \P(\{\|X^\eps- u^{\eps}\|_{\MR(0,T)}>\gamma\}\cap E_{n,\eps})+\P(E_{n,\eps}^c), 
\end{align*}
and for the last term, we have by Assumption \ref{ass:coer replace}\ref{it:3}  and \eqref{eq: def esssup N}: 
\[
\lim_{n\to\infty}\sup_{\eps\in(0,\eps_0)}\P(E_{n,\eps}^c)= \lim_{n\to\infty}\sup_{\eps\in(0,\eps_0)}\P(\|X^\eps\|_{\MR(0,T)}>n)=0. 
\]
Therefore, to prove the claimed convergence in probability, it suffices to show that for all $\delta>0$ and   large enough $n\in\N$:
\begin{equation}\label{eq: X^eps weak conv to show 2}
  \lim_{\eps\downarrow 0}\P(\{\|X^\eps- u^{\eps}\|_{\MR(0,T)}>\delta\}\cap E_{n,\eps})=0.
\end{equation}
We will prove that the above even holds for every $n\geq N$. Following the proof of \cite[Prop.\ 4.12]{TV24}, 
  the It\^o formula \cite[(A.4)]{TV24} gives for all $t\in[0,T]$:
  \begin{align}
    \|X^\eps(t)-u^{\eps}(t)\|_H^2
    &=2\int_0^t\<-A(s,X^\eps(s))+A(s,u^{\eps}(s)),X^\eps(s)-u^{\eps}(s)\?\dd s\notag\\
    &\qquad+2\int_0^t\<\big(B(s,X^\eps(s))-B(s,u^{\eps}(s))\big)\Psi^\eps(s),X^\eps(s)-u^{\eps}(s)\?\dd s\notag\\
    &\qquad+\eps\int_0^t \nn B(s,X^\eps(s))\nn_H^2\dd s\notag\\
    &\qquad+2\sqrt{\eps}\int_0^t \<X^\eps(s)-u^{\eps}(s),B(s,X^\eps(s))\dd W(s)\?\notag\\
    &\eqqcolon {I}_1^\eps(t)+{I}_2^\eps(t)+{I}_3^\eps(t)+{I}_4^\eps(t). \label{eq: I_i^eps}
    \end{align}
   First, we will derive an estimate of the form
    \begin{equation}\label{eq: to show I^1 I^2}
    {I}_1^\eps(t)+{I}_2^\eps(t)\leq -\frac{\theta_{n,T}}{2}\|X^\eps-u^{\eps}\|_{L^2(0,t;V)}^2+\int_0^t \hat{h}_{n,\eps}(s)\|X^\eps(s)-u^{\eps}(s)\|_H^2 \dd s
    \end{equation}
    that holds a.s.\ on the set $E_{n,\eps}$, for every $t\in[0,T]$ and $\eps\in(0,\eps_0)$. Here,  $\hat{h}_{n,\eps}\in L^1(0,T)$ a.s., with $\hat{\alpha}_n\coloneqq \sup_{\eps\in(0,\eps_0)}\esssup_\Om \|\hat{h}_{n,\eps}\one_{E_{n,\eps}}\|_{L^1(0,T)}<\infty$ for each $n\geq N$. 
    Then, Gr\"onwall's inequality immediately yields that a.e.\ on $E_{n,\eps}$:
    \begin{equation*}
    \|X^\eps-u^{\eps}\|_{\MR(0,T)}^2\lesssim_{n,T}\exp(\hat{\alpha}_n)\big({I}_3^\eps(T)+\sup_{t\in[0,T]}|{I}_4^\eps(t)|\big).
    \end{equation*}  
Therefore, to obtain   \eqref{eq: X^eps weak conv to show 2}, it suffices to prove  \eqref{eq: to show I^1 I^2}, together with the following (see the proof of \cite[Prop.\ 4.12]{TV24} for the detailed reasoning): 
    \begin{align}
    &\lim_{\eps\downarrow 0}\P({I}_3^\eps(T)> \delta)=0\quad \text{ for all } \delta>0, \label{eq: to show I^3}\\
    &\lim_{\eps\downarrow 0}\P(\sup_{t\in[0,T]}|{I}_4^\eps(t)|>\delta)=0\quad \text{ for all } \delta>0. \label{eq: to show I^4}
    \end{align}
    It remains to establish \eqref{eq: to show I^1 I^2}, \eqref{eq: to show I^3} and \eqref{eq: to show I^4}. 

    Let us begin with \eqref{eq: to show I^1 I^2}. Recall that  $(A,B)$ is defined by \eqref{eq:defAB}, which is the same as $(A,B)$ in \cite{TV24} except for $F$ and $f$ therein being replaced by our terms $\hat{F}$ and $\hat{f}$.  
   From \cite[(4.32)--(4.36)]{TV24} (with $F=0$), we can directly conclude that for all $\eps\in(0,\eps_0)$, we have pointwise  on $E_{n,\eps}$:
    \begin{align*}
     {I}_1^\eps(t)+{I}_2^\eps(t)&\leq -\theta_{n,T}\|X^\eps-u^{\eps}\|_{L^2(0,t;V)}^2+\int_0^t h_{n,\eps}(s)\|X^\eps(s)-u^{\eps}(s)\|_H^2\dd s\\
     & \quad+2\int_0^t \<\hat{F}(s,X^\eps(s))-\hat{F}(s,u^\eps(s)),X^\eps(s)-u^\eps(s)\?\dd s,
    \end{align*}
    where $h_{n,\eps}$ is of the form $
    h_{n,\eps} = C_{n,T,\sigma,\tilde{\sigma}}\left(1+\|\Psi^\eps\|_U^2+\|X^\eps\|_V^2+\|u^{\eps}\|_V^2\right)$ 
    for a constant $C_{n,T,\sigma,\tilde{\sigma}}>0$, and $\sigma$ and $\tilde{\sigma}$ depend only on $\theta_{n,T}$. Moreover, by virtue of Lemma \ref{lem: 3.8 extra F}\ref{it:emb6}, we have  for all $\eps\in(0,\eps_0)$, a.s.\ on $E_{n,\eps}$:
    \begin{align*}
    &\int_0^t \<\hat{F}(s,X^\eps(s))-\hat{F}(s,u^\eps(s)),X^\eps(s)-u^\eps(s)\?\dd s\\
    &\qquad\leq \hat{C}_{\frac{\theta_{n,T}}{4},n,T} \int_0^t  (1+\|X^\eps\|_V^2+ \|u^\eps\|_V^2)\|X^\eps-u^\eps\|_H^2\dd s+\frac{\theta_{n,T}}{4}\|X^\eps-u^\eps\|_{L^2(0,T;V)}^2.
    \end{align*}
    Thus, combining the last two estimates, we conclude that \eqref{eq: to show I^1 I^2} holds with $\hat{h}_{n,\eps}\ceqq h_{n,\eps}+2\hat{C}_{\frac{\theta_{n,T}}{4},n,T} (1+\|X^\eps\|_V^2+ \|u^\eps\|_V^2)$. 
    By the $L^2$-boundedness of $\Psi^\eps$ and by definition of $E_{n,\eps}$, we have a.s.\ on $E_{n,\eps}$: \!$\|h_{n,\eps}\one_{E_{n,\eps}}\|_{L^1(0,T)}\leq C_{n,T,\sigma,\tilde{\sigma}}(T+K^2+2n^2)+2\hat{C}_{\frac{\theta_{n,T}}{4},n,T} (1+2n^2)$,  for every $\eps\in(0,\eps_0)$. Thus  $\hat{h}_{n,\eps}$ is of the desired form.

    Concerning \eqref{eq: to show I^3},  \eqref{eq:lim2} readily implies for all  $\delta>0$:
    \begin{align*}
    \lim_{\eps\downarrow 0}\P({I}_3^\eps(T)>\delta)
    =\lim_{\eps\downarrow 0}\P( \| B(\cdot,X^\eps(\cdot))\|_{L^2(0,T;\UH)}^2>{\delta}{\eps}^{-1})=0.
    \end{align*}
    
    Lastly, we prove \eqref{eq: to show I^4}.  
    By  \cite[Prop.\ 18.6]{kallenberg21}, \eqref{eq: to show I^4} is equivalent to
    \begin{equation}\label{eq: I^4 sufficient convergence}
    \lim_{\eps\downarrow0}\P\Big(\eps\int_0^T\|\<X^\eps(s)-u^{\eps}(s),B(s,X^\eps(s))(\cdot)\?\|_{\mathscr{L}_2(U,\R)}^2\dd s>\delta\Big)=0 \text{ for all } \delta>0.
    \end{equation}
    By \cite[(4.39), (4.40)]{TV24}, we have   for all $\delta>0$ and $\eps\in(0,\eps_0)$:
    \begin{align*}
    \P\Big(\eps\int_0^T\|\<X^\eps(s)-&u^{\eps}(s),B(s,X^\eps(s))(\cdot)\?\|_{\mathscr{L}_2(U,\R)}^2\dd s>\delta\Big)\notag\\
    &\leq \P\big(\|X^\eps\|_{C([0,T];H)}>\tfrac{1}{2}({\delta}{\eps}^{-1})^{\frac{1}{4}}\big)+\P\big(\|u^{\eps}\|_{C([0,T];H)}>\tfrac{1}{2}({\delta}{\eps}^{-1})^{\frac{1}{4}}\big)\notag\\
    &\qquad+ \P\big(\| B(\cdot,X^\eps(\cdot))\|_{L^2(0,T;\UH)} >({\delta}{\eps}^{-1})^{\frac{1}{4}}\big). 
    \end{align*} 
    Now, combining the above estimate with Assumption \ref{ass:coer replace}\ref{it:3}, \eqref{eq: def esssup N} (recall $n\geq N$) and \eqref{eq:lim2}, we conclude that   \eqref{eq: I^4 sufficient convergence} holds.
\end{proof}

Gathering the results of this section, we now prove the LDP of Theorem \ref{th:LDP general} by verifying the conditions for the weak convergence approach as  formulated in \cite[Th.\ 4.5]{TV24}. 
 
\begin{proof}[\textbf{Proof of Theorem \ref{th:LDP general}}] 
To verify  the weak convergence approach conditions stated in \cite[Th.\ 4.5]{TV24}, it suffices to show that the conclusions of \cite[Lem.\ 4.6, Prop.\ 4.9, Prop.\ 4.12]{TV24} are still valid in the current setting, i.e.\  under Assumption \ref{ass:coer replace} (see the LDP proof in \cite[Th.\ 2.6, p. 32]{TV24}).

\textbf{\cite[Lem.\ 4.6]{TV24}}: Global well-posedness of \eqref{eq:SPDE} is used here, which we have required separately through Assumption \ref{ass:coer replace}\ref{it:1}. 
As explained in the proof of Lemma \ref{lem:consequences ass I-III}\ref{it:gen3}, by Assumption \ref{ass:coer replace}\ref{it:1}, the Yamada--Watanabe theorem and Girsanov's theorem, there exist measurable maps $\G^\eps$ such that $\G^\eps(W)$ is the unique strong solution to \eqref{eq:SPDE} and  $\G^\eps(W(\cdot)+\eps^{-1/2}\int_0^\cdot \bphieps\dd s)$   solves \eqref{eq:SPDE tilted} uniquely, thus the conclusions of \cite[Lem.\ 4.6]{TV24} are still valid.

\textbf{\cite[Prop.\ 4.9]{TV24}}: 
This is covered by Proposition \ref{prop:cpt sublevel sets}.  

\textbf{\cite[Prop.\ 4.12]{TV24}}: The statement is valid by Proposition \ref{prop: X^eps weak conv} 
 and by observing that $X^\eps=\G^\eps(W(\cdot)+\eps^{-1/2}\int_0^\cdot \bphieps\dd s)$ a.s.\  since the strong solution to \eqref{eq:SPDE tilted} is unique. 
\end{proof}

\section{Perturbed LDP results}\label{sec:perturb}

Here, we will extend the LDP results of Section \ref{sec:main results} to the following perturbed version of the stochastic evolution equation \eqref{eq:SPDE}:
\begin{equation}\label{eq:SPDE perturbed}
   \begin{cases}
  &\dd \tilde{Y}^\eps(t)=-A(t,\tilde{Y}^\eps(t))\dd t+\eta(\eps)\tilde{A}(t,\tilde{Y}^\eps(t))\dd t+\sqrt{\eps}B(t,\tilde{Y}^\eps(t))\dd W(t), \quad t\in[0,T], \\
  &\tilde{Y}^\eps(0)=x.
\end{cases}
\end{equation}  
The equation above contains an additional drift term $\eta(\eps)\tilde{A}$,  which is assumed to vanish, in the sense that $\lim_{\eps\downarrow0}\eta(\eps)=0$.  

A primary motivation for studying  \eqref{eq:SPDE perturbed} is its use for equations driven by Stratonovich noise. If one starts with equation \eqref{eq:SPDE} and interprets the $\dd W$-part in Stratonovich sense, then the latter can often be rewritten in the form  \eqref{eq:SPDE perturbed} (with It\^o integration), where $\eta(\eps)\tilde{A}=\eps\tilde{A}$ is the   It\^o--Stratonovich correction term. See Subsection \ref{ss:NS} for an example, and see   Remarks \ref{rem:appl strat} and \ref{ex:stratonovich coercive} for more applications.

Our LDP results for \eqref{eq:SPDE perturbed} consist of Theorem \ref{th:LDP perturbed2} for non-coercive equations and Corollaries  \ref{cor:LDPcoerciveperturbed} and \ref{cor:LDPcoerciveperturbed1} for coercive equations. Theorem \ref{th:LDP perturbed2} will be proved by making simple modifications to the proof of Theorem \ref{th:LDP general}.  
Crucially, the results show that the LDP remains stable under vanishing drift perturbations: provided $\eta(\eps)={o}(1)$, the LDP holds regardless of the decay rate of the perturbation relative to the noise intensity  $\sqrt{\eps}$.

As in the unperturbed case (Section \ref{sec:main results}), the rate function in the LDP will be determined by the skeleton equation  \eqref{eq:skeleton}, which does not see the perturbation term $\eta(\eps)\tilde{A}$. 
On the other hand, the relevant stochastic control problem does change, and is now given by 
\begin{equation}\label{eq:SPDE tilted perturb}
   \begin{cases}
  &\dd \tilde{X}^\eps(t)=-A(t,\tilde{X}^\eps(t))\dd t+\eta(\eps)\tilde{A}(t,\tilde{X}^\eps(t))\dd t\\
  &\qquad\qquad\quad\qquad+B(t,\tilde{X}^\eps(t))\psi(t)\dd t+\sqrt{\eps}B(t,\tilde{X}^\eps(t))\dd W(t), \quad t\in[0,T], \\
  &\tilde{X}^\eps(0)=x.
\end{cases}
\end{equation}

\subsection{The non-coercive perturbed case}
 
For the perturbation term in the drift, we will require \ref{it:tildeA} and \ref{it:eta} of the next assumption. 
Here, at once, we also include the other conditions that will be sufficient for the LDP. 

\begin{assumption}[Perturbed equation]\label{ass:perturb2} 
$(A,B)$  satisfies  Assumption  \ref{ass:critvarsettinglocal} and $x\in H$.  
Moreover, for $m$, $\alpha_i$, $\hat{\beta}_i$, $\hat{\rho}_i$ and $C_{n,T}$ from Assumption \ref{ass:critvarsettinglocal}, we have:  
\begin{enumerate}[label=\textit{(\roman*)},ref=\textit{(\roman*)}] 
\item\label{it:tildeA} $\tilde{A}=A_1+\sum_{i=1}^m\tilde{F}_i$ where  
${A}_1\col \R_+\times V\to V^*$ and $\tilde{F}_i\col \R_+\times V\to V_{\alpha_i}$ are Borel measurable. For all  $n,T>0$, $t\in[0,T]$ and $u,v\in V$ with $\|u\|_H,\|v\|_H\leq n$, it holds that
\begin{align*}
   &\|A_1(t,v)\|_{V^*}\leq C_{n,T}\|v\|_{V},\\ 
&\|\tilde{F}_i(t,u)\|_{\alpha_i}\leq C_{n,T} (1+\|u\|_{\hat{\beta}_i}^{\hat{\rho}_i+1}). 
\end{align*}
\item\label{it:eta} $\eta(\eps)\in\R$ for $\eps>0$ and  $\lim_{\eps\downarrow0}\eta(\eps)=0$. 
\end{enumerate} 
In addition, the following conditions are satisfied for some $\eps_0>0$:
\begin{enumerate}[label=\textit{( $\widetilde{\!\!\text{\Roman*}}$)},ref=\textit{( $\widetilde{\!\!\text{\Roman*}}$)}]
  \item \label{it:1pert} For every $U$-cylindrical Brownian motion $W$ on a filtered probability space $(\Om,\mathcal{F},(\mathcal{F}_t)_{t\geq 0},\P)$  and for every $\eps\in (0,\eps_0)$ and $T>0$, \eqref{eq:SPDE perturbed} has a unique strong solution $\tilde{Y}^\eps$ in the sense of Definition \ref{def:sol}. 
  \item \label{it:2pert} Assumption \ref{ass:coer replace}\ref{it:2} holds (for the same skeleton equation). 
\item \label{it:3pert} For every $T,K>0$ and for every collection $(\bphieps)_{\eps\in(0,\eps_0)}$ of predictable stochastic processes $\bphieps\col [0,T]\times \Om\to U$ with   $\|\bphieps\|_{L^2(0,T;U)}\leq K$ a.s., the following holds: 
if $(\tilde{X}^\eps)_{\eps\in(0,\eps_0)}$ is a family such that for all $\eps\in(0,\eps_0)$, $\tilde{X}^\eps$ is a strong solution to \eqref{eq:SPDE tilted perturb}, then 
\begin{equation*} 
    \lim_{\gamma\to\infty}\sup_{\eps\in(0,\eps_0)}\P(\|\tilde{X}^\eps\|_{\MR(0,T)}>\gamma)=0. 
\end{equation*}
\end{enumerate} 
\end{assumption}

\begin{remark}
  Lemma \ref{lem:suff non-blow up} extends to the setting of \eqref{eq:SPDE perturbed} and can be used to validate Assumption \ref{ass:perturb2}\ref{it:1pert}. That is, if Assumptions  \ref{ass:critvarsettinglocal} and \ref{ass:perturb2}\ref{it:tildeA}\ref{it:eta} hold, then Assumption \ref{ass:perturb2}\ref{it:1pert} is valid if the following non-blow-up is satisfied: 
  \hspace{0.05em}  
   \eqref{eq:suff non-blow up} holds for every maximal solution $(u,\sigma)$ to \eqref{eq:SPDE perturbed} with $\eps\in(0,\eps_0)$.  
\end{remark}

Now we formulate our LDP result for the perturbed equation \eqref{eq:SPDE perturbed}. The proof will be given in Subsection \ref{ss:proof perturb}.

\begin{theorem}[Perturbed LDP]\label{th:LDP perturbed2}  
    Let Assumption \ref{ass:perturb2} hold and let $T>0$.  
    Then the family $(\tilde{Y}^\eps)_{\eps\in(0,\eps_0)}$ of solutions to \eqref{eq:SPDE perturbed} satisfies the large deviation principle on $L^2(0,T;V)\cap C([0,T];H)$ with  the  rate function from Theorem \ref{th:LDP general}, i.e.\   
\begin{equation*} 
I(z)=\frac{1}{2}\inf\Big\{\textstyle{\int_0^T}\|\psi(s)\|_U^2\dd s : \psi\in L^2(0,T;U), \, z=u^{\psi}\Big\},
\end{equation*}
where $\inf\varnothing\coloneqq +\infty$ and $u^\psi$ is the unique strong solution to \eqref{eq:skeleton}. 
\end{theorem}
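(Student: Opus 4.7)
The plan is to adapt the proof of Theorem \ref{th:LDP general} by treating $\eta(\eps)\tilde{A}$ as an asymptotically negligible source in the stochastic control problem. Since the skeleton equation \eqref{eq:skeleton} does not see the perturbation, the rate function is identical to that in Theorem \ref{th:LDP general}, and Proposition \ref{prop:cpt sublevel sets} applies verbatim to yield the compact-sublevel-set condition of \cite[Th.\ 4.5]{TV24}. Thus the LDP will follow once I verify the remaining two conditions from that reference: the analogs of \cite[Lem.\ 4.6]{TV24} and \cite[Prop.\ 4.12]{TV24} for the perturbed equation.

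The first condition is essentially inherited: Assumption \ref{ass:perturb2}\ref{it:1pert} combined with the Yamada--Watanabe theorem and Girsanov's theorem produces measurable It\^o maps $\tilde{\G}^\eps$ such that $\tilde{\G}^\eps(W)$ is the unique strong solution to \eqref{eq:SPDE perturbed} and $\tilde{\G}^\eps(W(\cdot)+\eps^{-1/2}\int_0^\cdot\bphieps\dd s)$ is the unique strong solution to \eqref{eq:SPDE tilted perturb}, exactly as in Lemma \ref{lem:consequences ass I-III}\ref{it:gen3}. The in-probability bound on $B(\cdot,\tilde{X}^\eps(\cdot))$ in $L^2(0,T;\UH)$ analogous to \eqref{eq:lim2} then follows from Assumption \ref{ass:perturb2}\ref{it:3pert} and the growth bounds in Assumption \ref{ass:critvarsettinglocal} by the same argument.

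The main new work is the analog of Proposition \ref{prop: X^eps weak conv}, namely $\tilde{X}^\eps-u^\eps\to 0$ in probability in $\MR(0,T)$. I would define $N\ceqq\esssup_\Om\sup_\eps\|u^\eps\|_{\MR(0,T)}$ via Assumption \ref{ass:perturb2}\ref{it:2pert} and work on the localizing events $E_{n,\eps}\ceqq\{\|\tilde{X}^\eps\|_{\MR(0,T)}\leq n\}\cap\{\|u^\eps\|_{\MR(0,T)}\leq N\}$, noting that $\sup_\eps\P(E_{n,\eps}^c)\to 0$ as $n\to\infty$ by Assumption \ref{ass:perturb2}\ref{it:3pert}. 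Applying It\^o's formula to $\|\tilde{X}^\eps(t)-u^\eps(t)\|_H^2$ produces the same four terms $I_1^\eps,\ldots,I_4^\eps$ as in \eqref{eq: I_i^eps} plus one new perturbation term
\[
\tilde{I}_5^\eps(t)=2\eta(\eps)\int_0^t\<\tilde{A}(s,\tilde{X}^\eps(s)),\tilde{X}^\eps(s)-u^\eps(s)\?\dd s,
\]
and $I_1^\eps,\ldots,I_4^\eps$ are controlled precisely as in Proposition \ref{prop: X^eps weak conv}.

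To dispose of $\tilde{I}_5^\eps$, I would split $\tilde{A}=A_1+\sum_{i=1}^m\tilde{F}_i$. On $E_{n,\eps}$, Assumption \ref{ass:perturb2}\ref{it:tildeA} bounds the $A_1$-contribution by $\eta(\eps)C_{n,T}\|\tilde{X}^\eps\|_{L^2(V)}\|\tilde{X}^\eps-u^\eps\|_{L^2(V)}$, which Young's inequality splits into a small fraction $\sigma\|\tilde{X}^\eps-u^\eps\|_{L^2(V)}^2$ to be absorbed on the left, plus $C_\sigma\eta(\eps)^2 n^2$ which vanishes. For each $\tilde{F}_i$-part, the growth bound in Assumption \ref{ass:perturb2}\ref{it:tildeA} has the same structure as that of $\hat{F}_i$ in Assumption \ref{ass:critvarsettinglocal}\ref{it:hat F}, so the proof of Lemma \ref{lem: 3.8 extra F}\ref{it:emb4} yields $\|\tilde{F}_i(\tilde{X}^\eps)\|_{L^{2/(1+2\alpha_i)}(0,T;V_{\alpha_i})}\lesssim_{n,T}1$ on $E_{n,\eps}$; combining the pairing estimate \eqref{eq:interpol est alpha pair}, H\"older in time and Young's inequality then produces a further $\sigma\|\tilde{X}^\eps-u^\eps\|_{L^2(V)}^2$ plus a term of the form $C_{n,T,\sigma}\eta(\eps)^{\kappa_i}$ with some $\kappa_i>0$. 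Choosing $\sigma$ small relative to $\theta_{n,T}/(4(m+1))$, Gr\"onwall's inequality closes the loop and delivers $\tilde{X}^\eps-u^\eps\to 0$ in probability in $\MR(0,T)$. The main obstacle is exactly this control of $\tilde{I}_5^\eps$; the key point making it work is that the vanishing prefactor $\eta(\eps)$ defeats any polynomial growth allowed by Assumption \ref{ass:perturb2}\ref{it:tildeA}, since on $E_{n,\eps}$ the relevant norms of $\tilde{X}^\eps$ are deterministically bounded in terms of $n$ and $T$.
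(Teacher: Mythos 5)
Your proposal is correct and follows essentially the same route as the paper's Remark \ref{rem: tilde YW} and Proposition \ref{prop:stoch continuity perturb2}: Proposition \ref{prop:cpt sublevel sets} is reused verbatim (the skeleton equation does not see $\tilde{A}$), the Yamada--Watanabe/Girsanov reduction and the $\eqref{eq:lim2}$-type bound for $B(\cdot,\tilde{X}^\eps)$ carry over unchanged, and the only new work is controlling the extra It\^o term $J_5^\eps(t)=2\eta(\eps)\int_0^t\<\tilde{A}(s,\tilde{X}^\eps(s)),\tilde{X}^\eps(s)-u^{\eps}(s)\?\dd s$ in the analog of Proposition \ref{prop: X^eps weak conv}. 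Your stated key observation --- that on the localizing event $\tilde{E}_{n,\eps}$ all relevant norms of $\tilde{X}^\eps$ and $u^\eps$ are deterministically bounded by $n$ and $T$, so the vanishing prefactor $\eta(\eps)$ defeats any polynomial growth permitted by Assumption \ref{ass:perturb2}\ref{it:tildeA} --- is exactly the engine of the paper's argument. The one minor technical difference: you propose a Young-inequality split that absorbs $\sigma\|\tilde{X}^\eps-u^\eps\|_{L^2(V)}^2$ into the coercive term, whereas the paper does not bother with any absorption. It bounds $|J_5^\eps(t)|\le|\eta(\eps)|K_{n,T}$ outright on $\tilde{E}_{n,\eps}$, using \eqref{eq:interpol est alpha pair}, Lemma \ref{lem: 3.8 extra F}\ref{it:emb4} applied to $\tilde{F}_i$ (which satisfies the same growth bound as $\hat{F}_i$), and the trivial bounds $\|\tilde{X}^\eps-u^\eps\|_{C([0,T];H)}\le 2n$ and $\|\tilde{X}^\eps-u^\eps\|_{L^2(0,T;V)}\le 2n$ that hold by definition of $\tilde{E}_{n,\eps}$; thus $\sup_t|J_5^\eps(t)|\to 0$ pointwise with $\eta(\eps)$ and the term is handled exactly like $J_3^\eps,J_4^\eps$. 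The paper's direct bound is slightly cleaner and avoids the bookkeeping of tracking how small $\sigma$ must be relative to the post-Gr\"onwall constant; yours works too via the same underlying idea.
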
 

\begin{remark}\label{rem:appl strat}
Theorem \ref{th:LDP perturbed2} is useful for proving an LDP for non-coercive equations in which the noise is interpreted in Stratonovich sense. In particular, it can be used to prove the LDP for the examples given in Example \ref{ex:applications} with Stratonovich noise. For the  3D primitive equations  with transport noise, this has already been done in \cite[Th.\ 2.9]{AT25}.
\end{remark}

\subsection{The coercive perturbed case}

For the next two corollaries, we further specialize to semilinear equations. 
That is, we will assume that $A_0$ and $B_0$ are linear in the sense that 
\begin{equation}\label{eq:semilin}
A_0\col \R_+\to \mathscr{L}(V,V^*), \quad B_0\col \R_+\to \mathscr{L}(V,\UH). 
\end{equation}
In this case, we write $A_0(t)v$ and $B_0(t)v$ instead of $A_0(t,u)v$ and $B_0(t,u)v$. 
Also, we will let $\tilde{A}$ be linear. In particular, $\tilde{A}$ does not contain $\tilde{F}_i$-terms as in Assumption \ref{ass:perturb2}.  In the proof below, this is used to maintain stability of the coercivity \eqref{eq:def coercive} under the perturbation $\eta(\eps)\tilde{A}$. 

\begin{corollary}[Perturbed LDP -- coercive case, semilinear]\label{cor:LDPcoerciveperturbed}
  Let $(A,B)$ satisfy Assumption \ref{ass:critvarsettinglocal}, and suppose that $(A,B)$ is coercive in the sense of \eqref{eq:def coercive}. 
  
  Assume that $A_0$ and $B_0$ are as in \eqref{eq:semilin}, let $\tilde{A}\col \R_+ \to \mathscr{L}(V,V^*)$ be such that   
  $$
  C_T\ceqq \sup_{t\in[0,T]}\|\tilde{A}(t)\|_{\mathscr{L}(V,V^*)}<\infty \text{ for all } T>0,  
  $$
  and let $(\eta(\eps))_{\eps>0}\subset \R$ be such that  $\lim_{\eps\downarrow0}\eta(\eps)=0$. 
  
  Then, there exists an $\eps_0>0$ such that for all $\eps\in(0,\eps_0)$, \eqref{eq:SPDE perturbed} has a unique global strong solution $\tilde{Y}^\eps$  and the family $(\tilde{Y}^\eps)_{\eps\in(0,\eps_0)}$ 
    satisfies the large deviation principle on $L^2(0,T;V)\cap C([0,T];H)$, with the same  rate function as in Theorem \ref{th:LDP general}. 
\end{corollary}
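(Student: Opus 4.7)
The plan is to deduce the result from Theorem \ref{th:LDP perturbed2} by verifying Assumption \ref{ass:perturb2}. The central observation is that, since $\tilde{A}(t)$ is linear with $\|\tilde{A}(t)\|_{\mathscr{L}(V,V^*)}\le C_T$ and $\eta(\eps)\to 0$, for $\eps$ small enough the perturbation $\eta(\eps)\tilde{A}$ can be absorbed into the coercivity constant of $(A,B)$. Concretely, if $\theta,M,\phi$ are as in \eqref{eq:def coercive}, then
\begin{equation*}
|\eta(\eps)\l\tilde{A}(t)v,v\r|\le |\eta(\eps)|C_T\|v\|_V^2,
\end{equation*}
so picking $\eps_0>0$ with $|\eta(\eps)|C_T<\theta/2$ for $\eps\in(0,\eps_0)$ yields that  $\bigl(A-\eta(\eps)\tilde{A},\sqrt{\eps}B\bigr)$ is coercive with constant $\theta/2>0$ and the same $M$, $\phi$ (up to harmless adjustments), and still satisfies Assumption \ref{ass:critvarsettinglocal} with trivially modified constants.

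I would then verify the conditions of Assumption \ref{ass:perturb2} one by one. Condition \ref{it:tildeA} is satisfied by putting $A_1(t,v)\ceqq\tilde{A}(t)v$ and taking no $\tilde{F}_i$-terms; the growth bound is immediate from $C_T<\infty$. Condition \ref{it:eta} is given. For  \ref{it:1pert}, apply the global well-posedness result \cite[Th.\ 3.5]{AV22variational} (in the form of \cite[Th.\ 7.1, Rem.\ 7.2]{BGV}) to the coercive pair $(A-\eta(\eps)\tilde{A},\sqrt{\eps}B)$, for $\eps\in(0,\eps_0)$, which provides the required global strong solution $\tilde{Y}^\eps$. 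Condition \ref{it:2pert} is already in force: the skeleton equation \eqref{eq:skeleton} is unaffected by $\eta(\eps)\tilde{A}$, so the energy estimate from the proof of Corollary \ref{cor:LDPcoercive} (based on \cite[(3.31)]{TV24} and coercivity of $(A,B)$) applies verbatim.

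For the remaining condition \ref{it:3pert}, I would replay the proof of \cite[Lem.\ 4.11]{TV24} for the controlled equation \eqref{eq:SPDE tilted perturb}, with $(A,B)$ replaced by the coercive pair $(A-\eta(\eps)\tilde{A},\sqrt{\eps}B)$. The It\^o formula and coercivity (with constant $\theta/2$ uniform in $\eps\in(0,\eps_0)$) give, together with the Burkholder--Davis--Gundy inequality and Gr\"onwall's lemma, a uniform-in-$\eps$ bound of the form $\E\|\tilde{X}^\eps\|_{\MR(0,T)}^p\le C(p,T,K,\|x\|_H,\|\phi\|_{L^2})$ on $\{\|\Psi^\eps\|_{L^2(0,T;U)}\le K\}$, from which uniform boundedness in probability follows by Chebyshev's inequality. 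Crucially, the cross-term $\eta(\eps)\l\tilde{A}(t)\tilde{X}^\eps,\tilde{X}^\eps\r$ was already absorbed into the coercive estimate at the very first step, so no new arguments are needed.

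The main (very mild) obstacle is that $\tilde{A}$ is controlled only by the $V$-norm, so one cannot hope to absorb $\eta(\eps)\tilde{A}$ into the lower-order term $M\|v\|_H^2$; the smallness of $\eta(\eps)$ is essential, which is exactly why the choice of $\eps_0$ depends on $\theta$ and $C_T$. Since the perturbation is linear (no $\tilde{F}_i$-terms), coercivity is preserved without any critical-growth bookkeeping, and the proof reduces to a straightforward reapplication of the coercive machinery.
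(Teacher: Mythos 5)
Your proof is correct and follows essentially the same route as the paper: absorb the $\eta(\eps)\tilde{A}$ perturbation into the coercivity constant of both $(A_0,B_0)$ and $(A,B)$ for small $\eps$, invoke \cite[Th.\ 3.5]{AV22variational} (via \cite[Th.\ 7.1, Rem.\ 7.2]{BGV}) for \ref{it:1pert}, note \ref{it:2pert} is unaffected by the perturbation and was verified in Corollary \ref{cor:LDPcoercive}, and replay the proof of \cite[Lem.\ 4.11]{TV24} with the absorbed coercivity constant for \ref{it:3pert}. The only slight slip is in the last step: since $\sqrt{\eps}$ already multiplies the noise in \eqref{eq:SPDE tilted perturb}, the pair to substitute into the tilted-equation argument is $(A-\eta(\eps)\tilde{A},\,B)$ rather than $(A-\eta(\eps)\tilde{A},\,\sqrt{\eps}B)$, but this is a notational conflation that would resolve itself in writing out the lemma.
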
 

\begin{proof}
It suffices to prove that Assumption \ref{ass:perturb2}\ref{it:1pert}--\ref{it:3pert} holds. Then \ref{it:1pert} covers the stated well-posedness and  
 the LDP follows from Theorem \ref{th:LDP perturbed2}.   \ref{it:2pert} coincides with Assumption \ref{ass:coer replace}\ref{it:2}, which was already verified in the proof of Corollary \ref{cor:LDPcoercive}. Thus  it remains to prove \ref{it:1pert} and \ref{it:3pert}.  

\ref{it:1pert}: For  $\eps>0$,  
define $A_0^\eps(t)v\ceqq A_0(t)v-\eta(\eps)\tilde{A}(t)v$ and $A^\eps(t,v)\ceqq A(t,v)-\eta(\eps)\tilde{A}(t)v$. 
We show that $(A_0^\eps,B_0)$ and $(A^\eps,B)$ are coercive for all sufficiently small $\eps$. 
 
Since $A_0$ and $B_0$ are as in \eqref{eq:semilin}, Assumption \ref{ass:critvarsettinglocal}\ref{it:coercivelinear} is satisfied with $(\theta_{n,T},M_{n,T})= (\theta_{0,T},M_{0,T})$.  
Choose $\eps_0'>0$ such that $\eta(\eps)<\theta_{0,T}/(2C_{T})$ for all $\eps\in(0,\eps_0')$.  
Then, for all $v\in V$:
 \begin{align*}
  \<A_0^\eps(t)v,v\?-\tfrac{1}{2}\nn B_0(t)v\nn_H^2 
  &\geq -|\eta(\eps)|\|\tilde{A}(t)v\|_{V^*}\|v\|_V+\theta_{0,T}\|v\|_V^2-M_{0,T}\|v\|_H^2 \\
  &\geq \tfrac{\theta_{0,T}}{2}\|v\|_V^2-M_{0,T}\|v\|_H^2. 
  \end{align*}
  Similarly, using \eqref{eq:def coercive} and choosing $\eps_0\in(0,\eps_0']$ such that $\eta(\eps)<\theta/(2C_{T})$ for all $\eps\in(0,\eps_0)$ yields 
  \begin{align}\label{eq:coerpert}
  \<A^\eps(t,v),v\?-\tfrac{1}{2}\nn B(t,v)\nn_H^2 
  &\geq -|\eta(\eps)|\|\tilde{A}(t)v\|_{V^*}\|v\|_V +\theta \|v\|_V^2-M \|v\|_H^2-|\phi(t)|^2\notag\\
  &\geq \tfrac{\theta }{2}\|v\|_V^2-M \|v\|_H^2-|\phi(t)|^2.
  \end{align}
Thus, \eqref{eq:SPDE perturbed} has a unique global solution by \cite[Th.\ 3.5]{AV22variational} applied to $(A^\eps,B)$, noting that the latter result also holds for $A^\eps$ containing the terms $\hat{F}$ and $\hat{f}$ (see \cite[Th.\ 7.1, Rem.\ 7.2]{BGV}). 
 
\ref{it:3pert}: Using the coercivity estimate \eqref{eq:coerpert}, we can immediately apply the 
  proof of \cite[Lem.\ 4.11]{TV24} with $(A,B)$ replaced by $(A^\eps,B)$ and with $\theta$ replaced by ${\theta}/{2}$.  This concludes the proof. 
\end{proof}

Finally, we prove one more corollary for the coercive case. Here we relax the coercivity conditions that are required in Corollary \ref{cor:LDPcoerciveperturbed}, by exploiting the fact that we consider the small-noise limit. 
In the proof, we use another perturbation argument to obtain a suitable small-noise coercivity. Let us emphasize that by taking $\eta=\tilde{A}=0$ in the next corollary, the same result immediately also holds for the unperturbed equation \eqref{eq:SPDE}.

The result will be used in Subsection \ref{ss:NS}, where it allows for the use of the same parabolicity condition as one would assume for well-posedness in the Stratonovich noise case  (corresponding to coercivity of $(A_0,0)$, not of $(A_0,B_0)$), see Remark \ref{rem:NS coer}.

\begin{corollary}[Perturbed LDP -- perturbed coercive case, semilinear]\label{cor:LDPcoerciveperturbed1}
  Let $(A,B)$ satisfy Assumption \ref{ass:critvarsettinglocal} without  \ref{ass:critvarsettinglocal}\ref{it:coercivelinear}. Let 
   $(A_0,0)$ satisfy Assumption \ref{ass:critvarsettinglocal}\ref{it:coercivelinear}.  
    Suppose that for some $\delta>0$, the following coercivity condition holds: there exist $\theta,M>0$ and ${\phi}\in L^2(0,T)$ such that  
   \begin{equation}\label{eq:def coercive delta}
\<A(t,v),v\?-\delta \|G(t,v)\|_{\mathscr{L}_2(U,H)}^2 \geq \theta\|v\|_V^2-M\|v\|_H^2-|{\phi}(t)|^2,\qquad v\in V,\, t\in[0,T]. 
\end{equation}  
   
  Suppose that $A_0$ and $B_0$ are as in \eqref{eq:semilin}, let $\tilde{A}\col \R_+ \to \mathscr{L}(V,V^*)$ satisfy    $\sup_{t\in[0,T]}\|\tilde{A}(t)\|_{\mathscr{L}(V,V^*)}<\infty$ for all $T>0$,  
  and let $(\eta(\eps))_{\eps>0}\subset \R$ be such that  $\lim_{\eps\downarrow0}\eta(\eps)=0$.

Then there exists an $\eps_0>0$, such that for all $\eps\in(0,\eps_0)$, \eqref{eq:SPDE perturbed} has a unique global strong solution $\tilde{Y}^\eps$  and the family $(\tilde{Y}^\eps)_{\eps\in(0,\eps_0)}$  
satisfies the large deviation principle on $L^2(0,T;V)\cap C([0,T];H)$, with the same  rate function as in Theorem \ref{th:LDP general}. 
\end{corollary}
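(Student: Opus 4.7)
The plan is to verify the hypotheses of Theorem \ref{th:LDP perturbed2}, namely Assumption \ref{ass:perturb2}\ref{it:1pert}--\ref{it:3pert}. Because full coercivity \eqref{eq:def coercive} of $(A,B)$ is unavailable, the key preliminary step is to establish a \emph{small-noise coercivity} for the perturbed pair $(A^\eps, \sqrt{\eps} B)$, with $A^\eps(t,v) \coloneqq A(t,v) - \eta(\eps)\tilde{A}(t) v$.

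Concretely, I first show that there exists $\eps_0 \in (0,1)$ such that for every $\eps \in (0,\eps_0)$ the pair $(A^\eps, \sqrt{\eps} B)$ satisfies \eqref{eq:def coercive} with $\theta$ replaced by $\theta/2$. Using linearity of $B_0$, the bound $\nn B_0(t) v\nn_H \leq C_T\|v\|_V$ from Assumption \ref{ass:critvarsettinglocal}\ref{it:growth AB}, the elementary inequality $\nn B(t,v)\nn_H^2 \leq 3\bigl(\nn B_0(t)v\nn_H^2 + \nn G(t,v)\nn_H^2 + \nn g(t)\nn_H^2\bigr)$, and $\tilde{C}_T \coloneqq \sup_{t\in[0,T]} \|\tilde{A}(t)\|_{\mathscr{L}(V,V^*)} < \infty$, one obtains for all $v \in V$, $t \in [0,T]$
\begin{align*}
\<A^\eps(t,v),v\? - \tfrac{\eps}{2}\nn B(t,v)\nn_H^2 &\geq \<A(t,v),v\? - \tfrac{3\eps}{2}\nn G(t,v)\nn_H^2 \\
&\qquad - \bigl(\tfrac{3\eps}{2} C_T^2 + |\eta(\eps)| \tilde{C}_T\bigr)\|v\|_V^2 - \tfrac{3\eps}{2}\nn g(t)\nn_H^2.
\end{align*}
Provided $3\eps/2 \leq \delta$, the weakened coercivity \eqref{eq:def coercive delta} can be applied to the first two terms on the right; after further shrinking $\eps_0$ to ensure $\tfrac{3\eps}{2}C_T^2 + |\eta(\eps)|\tilde{C}_T \leq \theta/2$ for all $\eps \in (0,\eps_0)$, we obtain the desired small-noise coercivity. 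An analogous (simpler) calculation yields Assumption \ref{ass:critvarsettinglocal}\ref{it:coercivelinear} for $(A_0^\eps, \sqrt{\eps} B_0)$.

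With these coercivities established, \ref{it:1pert} follows from \cite[Th.\ 3.5]{AV22variational} (extended to the terms $\hat{F}, \hat{f}$ via \cite[Th.\ 7.1, Rem.\ 7.2]{BGV}) applied to $(A^\eps, \sqrt{\eps} B)$, yielding global well-posedness of \eqref{eq:SPDE perturbed}. For the deterministic energy estimate \ref{it:2pert}, I apply the chain rule to $\|u^\psi\|_H^2$ and split $\<B(t,u^\psi)\psi, u^\psi\?_H \leq c\,\nn B(t,u^\psi)\nn_H^2 + (4c)^{-1}\|\psi\|_U^2 \|u^\psi\|_H^2$ with $c \leq \delta/3$; decomposing $\nn B\nn_H^2$ as before and invoking \eqref{eq:def coercive delta}, absorption of $c\, C_T^2 \|u^\psi\|_V^2$ into the coercive term yields a Gronwall inequality producing the claimed $\MR(0,T)$-bound. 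Finally, for \ref{it:3pert}, I would follow the proof of \cite[Lem.\ 4.11]{TV24}, replacing coercivity of $(A,B)$ by the small-noise coercivity of $(A^\eps, \sqrt{\eps}B)$ derived above (the perturbation $\eta(\eps)\tilde{A}$ is already built into $A^\eps$). An application of Theorem \ref{th:LDP perturbed2} then completes the proof.

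The main conceptual obstacle is the non-standard form of \eqref{eq:def coercive delta}, which controls $\nn G\nn_H^2$ rather than the full $\nn B\nn_H^2$ and therefore does \emph{not} imply coercivity of $(A,B)$; this is precisely why Corollary \ref{cor:LDPcoerciveperturbed} is not applicable. The key insight is that linearity of $B_0$ combined with the smallness of $\sqrt{\eps}$ and of $\eta(\eps)$ allows the missing $B_0$-contribution in $\eps\,\nn B\nn_H^2$ to be absorbed into $\theta\|v\|_V^2$, so that a small-noise coercivity sufficient for well-posedness and the required moment estimates can nevertheless be extracted from \eqref{eq:def coercive delta}.
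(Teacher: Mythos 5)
There is a genuine gap. The hypothesis of Theorem \ref{th:LDP perturbed2} is \emph{all} of Assumption \ref{ass:perturb2}, whose preamble requires that $(A,B)$ satisfy the \emph{full} Assumption \ref{ass:critvarsettinglocal}, including \ref{ass:critvarsettinglocal}\ref{it:coercivelinear} for the $\eps$-independent pair $(A_0,B_0)$. This is exactly the condition that Corollary \ref{cor:LDPcoerciveperturbed1} declares unavailable (you only have linear coercivity of $(A_0,0)$), and your argument cannot recover it: your ``small-noise coercivity'' produces \ref{ass:critvarsettinglocal}\ref{it:coercivelinear} for the $\eps$-\emph{dependent} pair $(A_0^\eps,\sqrt{\eps}B_0)$, but the skeleton-equation machinery from Section~3 (Corollary \ref{cor: skeleton MR linearized crit var setting}, Theorem \ref{th: local well posedness skeleton}, Proposition \ref{prop: blow up criterion}), Lemma \ref{lem:consequences ass I-III}, and Propositions \ref{prop:cpt sublevel sets}, \ref{prop:stoch continuity perturb2} all invoke the constants $\theta_{n,T},M_{n,T}$ belonging to the $\eps$-independent pair. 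You would have to re-derive all of that theory under the weaker hypothesis, which your proposal does not do; in particular, the phrase ``verify the hypotheses of Theorem \ref{th:LDP perturbed2}, namely Assumption \ref{ass:perturb2}\ref{it:1pert}--\ref{it:3pert}'' silently drops the main structural requirement.

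The paper sidesteps this by rescaling the diffusion coefficient rather than exploiting the $\sqrt{\eps}$-smallness directly. It first fixes $\delta_1 = \sqrt{\theta_{0,T}}/C_{0,T}$ so that $(A_0,\delta_1 B_0)$ satisfies \ref{ass:critvarsettinglocal}\ref{it:coercivelinear}, then a second $\delta_2\leq\sqrt{\delta}$ so that $(A,\delta_2 B)$ satisfies the full coercivity \eqref{eq:def coercive} (absorbing $\nn B_0 v\nn_H^2$ into $\theta\|v\|_V^2$ and using \eqref{eq:def coercive delta} for the $G$-part). For $\hat{\delta}=\delta_1\wedge\delta_2$ the pair $(A,\hat{\delta}B)$ satisfies the hypotheses of Corollary \ref{cor:LDPcoerciveperturbed}, applied with $\hat{\eta}(\eps)=\eta(\eps\hat{\delta}^2)$. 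The substitutions $\eps\mapsto\eps\hat{\delta}^2$ and $\psi\mapsto\hat{\delta}^{-1}\psi$ then transfer the LDP and rate function back to the original equation and skeleton. Your insight that $\sqrt{\eps}$-smallness should let the $B_0$-contribution be absorbed is sound and morally equivalent, but the rescaling trick is what converts that intuition into a reduction to an \emph{already-proved} statement (Corollary \ref{cor:LDPcoerciveperturbed}), avoiding the need to revisit the entire framework with $\eps$-dependent linear coercivity.
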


\begin{proof} 
The result will be derived from Corollary \ref{cor:LDPcoerciveperturbed}. 

First, we observe that for some $\delta_1>0$, $(A,\delta_1 B)$ satisfies the full Assumption \ref{ass:critvarsettinglocal}. One has to check only \ref{ass:critvarsettinglocal}\ref{it:coercivelinear} for $(A_0,\delta_1 B_0)$, as the other conditions are satisfied by $(A,B)$, hence by $(A,\delta_1 B)$ for all $\delta_1>0$. Using \eqref{eq:semilin}, coercivity of $(A_0,0)$ (with constants $\theta_{0,T},M_{0,T}$) and the growth bound for $B_0$ in \ref{ass:critvarsettinglocal}\ref{it:growth AB}, we find  
\begin{align*} 
\<A_0(t)v,v\?-\tfrac{1}{2}\nn \delta_1 B_0(t)v\nn_H^2 
&\geq  \theta_{0,T} \|v\|_V^2-M_{0,T} \|v\|_H^2-\tfrac{1}{2}\delta_1^2C_{0,T}^2\|v\|_V^2 
\end{align*} 
Thus $\delta_1\ceqq \sqrt{\theta_{0,T}}/C_{0,T}$ suffices. 
  
Next, we show that $(A,\delta_2 B)$ satisfies the coercivity \eqref{eq:def coercive} for some $\delta_2>0$. Recall that $B=B_0+G+g$ by Assumption \ref{ass:critvarsettinglocal}. We have for all $\delta_2\leq \sqrt{\delta}$:
\begin{align*} 
   \<A(t,v),v\?-\tfrac{1}{2}\nn \delta_2 B (t,v) \nn_H^2 
  &\geq \<A(t,v),v\?- \delta_2^2\big(\nn  G (t,v) \nn_H^2+ \nn B_0 (t)v\nn_H^2+\nn g(t)\nn_H^2 \big) \\ 
  &\geq \theta \|v\|_V^2-M \|v\|_H^2-|{\phi}(t)|^2-\delta_2^2C_{0,T}^2\|v\|_V^2 -\delta \nn g(t)\nn_H^2. 
\end{align*} 
Thus, putting $\delta_2\ceqq \sqrt{\delta}\wedge \sqrt{\theta}/(\sqrt{2}C_{0,T})$ and ${\phi}_1(t)\ceqq  |{\phi}(t)| +\delta \nn g(t)\nn_H$, it follows that $(A,\delta_2 B)$ satisfies coercivity of the form \eqref{eq:def coercive}. Note that ${\phi}_1\in L^2(0,T)$ thanks to Assumption \ref{ass:critvarsettinglocal}\ref{it:AB mble}. 
  
Now, for $\hat{\delta} \ceqq \delta_1\wedge \delta_2$, $(A,\hat{\delta} B)$ satisfies the full Assumption \ref{ass:critvarsettinglocal} as well as \eqref{eq:def coercive}. 
Therefore, by Corollary \ref{cor:LDPcoerciveperturbed} applied to $(\hat{A},\hat{B})\ceqq (A,\hat{\delta}B)$, $\hat{\eta}(\eps)\ceqq \eta(\eps\hat{\delta}^2)$ and with original $\tilde{A}$, it follows that there exists $\hat{\eps}_0>0$ such that for all $\eps\in(0,\hat{\eps}_0)$, the following equation is well-posed:
\begin{equation*} 
\begin{cases}
  &\dd \hat{Y}^\eps(t)=-A(t,\hat{Y}^\eps(t))\dd t+\eta(\eps\hat{\delta}^2)\tilde{A}(t,\hat{Y}^\eps(t))\dd t+\sqrt{\eps}\hat{\delta}B(t,\hat{Y}^\eps(t))\dd W(t), \quad t\in[0,T], \\
  &\hat{Y}^\eps(0)=x.
\end{cases}
\end{equation*}  
and the solution family $(\hat{Y}^\eps)_{\eps\in(0,\hat{\eps}_0)}$ satisfies the LDP on $L^2(0,T;V)\cap C([0,T];H)$, with rate function $\hat{I}\col L^2(0,T;V)\cap C([0,T];H)\to[0,\infty]$ given by $\hat{I}(z)=\frac{1}{2}\inf \{ \int_0^T \|\psi(s)\|_U^2\dd s : \psi\in L^2(0,T;U), \, z=\hat{u}^{\psi} \},$
where $\inf\varnothing\coloneqq +\infty$ and $\hat{u}^\psi$ is the unique strong solution to \eqref{eq:skeleton} with $B$ replaced by $\hat{B}$. 

Now, performing the substitutions $\eps\mapsto \eps\hat{\delta}^2$ and $\psi\mapsto \hat{\delta}^{-1}\psi$, we conclude that \eqref{eq:SPDE perturbed} has a unique global strong solution $\tilde{Y}^\eps$ for all $\eps\in (0,\eps_0)\ceqq (0,\hat{\eps}_0\hat{\delta}^2)$ and the family $(\tilde{Y}^\eps)_{\eps\in(0,\eps_0)}$ satisfies the claimed LDP. To see that the rate function is correct, observe that  $\tilde{Y}^\eps=\hat{Y}^{\eps\hat{\delta}^{-2}}$, and note that $\hat{u}^{\hat{\delta}^{-1}\psi}=u^{\psi}$ implies that $\hat{I}(z)=\hat{\delta}^{-2}I(z)$. 
\end{proof}

\begin{remark}
The proofs of Corollaries \ref{cor:LDPcoerciveperturbed} and \ref{cor:LDPcoerciveperturbed1} are still valid if we replace the assumption that $A_0$ and $B_0$ are as in \eqref{eq:semilin}, by: $(A,B)$ satisfies Assumption \ref{ass:critvarsettinglocal} without  \ref{ass:critvarsettinglocal}\ref{it:coercivelinear} with $n$-independent constants $C_T=C_{n,T}$, and $(A_0,0)$ satisfies Assumption \ref{ass:critvarsettinglocal}\ref{it:coercivelinear}   with $n$-independent constants $(\theta_T,M_T)=(\theta_{n,T},M_{n,T})$. 
\end{remark}

\begin{remark}\label{ex:stratonovich coercive}
Corollaries \ref{cor:LDPcoerciveperturbed} and \ref{cor:LDPcoerciveperturbed1} give rise to LDPs for many coercive (variational) SPDEs with Stratonovich noise. For instance, one can consider the LDP for 2D fluid models discussed in \cite[\S5.1]{TV24} and obtain extensions from  It\^o transport noise to Stratonovich transport noise.  In particular, we can extend \cite[\S5.2]{TV24} to obtain the LDP for   2D Navier--Stokes equation with Stratonovich transport noise, on both bounded and unbounded spatial domains. This is a new result. The details are given in Subsection \ref{ss:NS}. 
\end{remark}

\subsection{Proof of Theorem \ref{th:LDP perturbed2}}\label{ss:proof perturb}

Here, we provide the proof of Theorem \ref{th:LDP perturbed2}. 
First, let us take into account the following remark. 
\begin{remark}\label{rem: tilde YW}
Under Assumptions \ref{ass:perturb2}\ref{it:1pert} and \ref{it:3pert}, the argument in the proof of Lemma \ref{lem:consequences ass I-III}\ref{it:gen3} ensures  unique existence of a strong solution  $\tilde{X}^\eps$ to \eqref{eq:SPDE tilted perturb}, as well as the analog of \eqref{eq:lim2}:
\begin{align} 
\lim_{\gamma\to\infty} \sup_{\eps\in(0,\eps_0)}\P(\|B(\cdot,\tilde{X}^\eps(\cdot))\|_{L^2(0,T;\UH)}>\gamma)=0.\label{eq:lim2 tilde}
\end{align}   
\end{remark}   

The next proposition will be sufficient for proving the LDP for the solutions $\tilde{Y}^\eps$ to \eqref{eq:SPDE perturbed}. 

\begin{proposition}\label{prop:stoch continuity perturb2}
   Suppose that Assumption \ref{ass:perturb2} holds and let $T,K>0$  and $x\in H$. For $\eps\in(0,\eps_0)$, let $\bphieps\col [0,T]\times \Om\to U$  be a predictable stochastic process with   $\|\bphieps\|_{L^2(0,T;U)}\leq K$ a.s., and let   $\tilde{X}^\eps$ be the strong solution to 
  \eqref{eq:SPDE tilted perturb}. 
Moreover, suppose that $u^\eps(\om)=u^{\Psi_\eps(\om)}$ for a.e.\ $\om\in\Om$, where $u^{\Psi_\eps(\om)}$  is the strong solution  to \eqref{eq:skeleton} with $\psi=\Psi^\eps(\om)$. 
\end{proposition}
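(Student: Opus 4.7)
The plan is to mimic the proof of Proposition \ref{prop: X^eps weak conv}, treating the additional drift term $\eta(\eps)\tilde A(t,\tilde X^\eps)$ as a vanishing perturbation that contributes an extra, negligible summand after applying Itô's formula. The expected conclusion, in analogy with Proposition \ref{prop: X^eps weak conv}, is that $\tilde X^\eps - u^\eps \to 0$ in probability in $\MR(0,T)$ as $\eps\downarrow 0$.

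First I would set up the same localization: let
\[
N\ceqq \esssup_{\om\in\Om}\sup_{\eps\in(0,\eps_0)}\|u^\eps\|_{\MR(0,T)}<\infty
\]
(via Assumption \ref{ass:perturb2}\ref{it:2pert}) and define $E_{n,\eps}\ceqq \{\|\tilde X^\eps\|_{\MR(0,T)}\le n\}\cap\{\|u^\eps\|_{\MR(0,T)}\le N\}$. By Assumption \ref{ass:perturb2}\ref{it:3pert}, it suffices to show that for every $\delta>0$ and every $n\ge N$,
\[
\lim_{\eps\downarrow 0}\P\bigl(\{\|\tilde X^\eps - u^\eps\|_{\MR(0,T)}>\delta\}\cap E_{n,\eps}\bigr)=0.
\]
Applying Itô's formula to $\|\tilde X^\eps - u^\eps\|_H^2$ produces the four terms $I_1^\eps,\dots,I_4^\eps$ from \eqref{eq: I_i^eps} together with the additional term
\[
I_5^\eps(t)\ceqq 2\eta(\eps)\int_0^t\<\tilde A(s,\tilde X^\eps(s)),\tilde X^\eps(s)-u^\eps(s)\?\,ds.
\]
The estimate \eqref{eq: to show I^1 I^2} for $I_1^\eps+I_2^\eps$ and the convergences \eqref{eq: to show I^3}--\eqref{eq: to show I^4} transcribe verbatim from the proof of Proposition \ref{prop: X^eps weak conv}, now invoking \eqref{eq:lim2 tilde} from Remark \ref{rem: tilde YW} in place of \eqref{eq:lim2}. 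Thus the only new ingredient is to show $\sup_{t\in[0,T]}|I_5^\eps(t)|\to 0$ in probability on $E_{n,\eps}$.

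To handle $I_5^\eps$, decompose $\tilde A=A_1+\sum_{i=1}^m\tilde F_i$ as in Assumption \ref{ass:perturb2}\ref{it:tildeA}. For the $A_1$-part, Cauchy--Schwarz and the linear growth bound on $A_1$ give, on $E_{n,\eps}$,
\[
2|\eta(\eps)|\int_0^T\|A_1(s,\tilde X^\eps)\|_{V^*}\|\tilde X^\eps-u^\eps\|_V\,ds\le 2|\eta(\eps)|C_{n,T}(n+N)n\le C(n,N,T)|\eta(\eps)|.
\]
For the $\tilde F_i$-part, since the growth bound on $\tilde F_i$ coincides with the one on $\hat F_i$, the proof of Lemma \ref{lem: 3.8 extra F}\ref{it:emb4} applies to $\tilde F_i$, giving $\|\tilde F_i(\tilde X^\eps)\|_{L^{2/(1+2\alpha_i)}(0,T;V_{\alpha_i})}\le \hat C_{n,T}(1+n^{1+2\alpha_i})$ on $E_{n,\eps}$. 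Combined with the mixed-norm interpolation \eqref{eq:interpol est alpha pair} and H\"older, one obtains
\[
\Bigl|\int_0^T\<\tilde F_i(\tilde X^\eps),\tilde X^\eps - u^\eps\?\,ds\Bigr|\le \|\tilde X^\eps - u^\eps\|_{L^\infty(0,T;H)}^{2\alpha_i}\|\tilde F_i(\tilde X^\eps)\|_{L^{2/(1+2\alpha_i)}(0,T;V_{\alpha_i})}\|\tilde X^\eps - u^\eps\|_{L^2(0,T;V)}^{1-2\alpha_i},
\]
which is bounded by a constant $C'(n,N,T)$ on $E_{n,\eps}$. Summing over $i$, the total $\tilde F$-contribution to $I_5^\eps$ is $\le C''(n,N,T)|\eta(\eps)|$ on $E_{n,\eps}$. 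Since $\eta(\eps)\to 0$ by Assumption \ref{ass:perturb2}\ref{it:eta}, $\sup_{t\in[0,T]}|I_5^\eps(t)|\to 0$ uniformly on $E_{n,\eps}$. Gr\"onwall's inequality, applied exactly as after \eqref{eq: to show I^1 I^2}, then delivers the desired convergence in probability.

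The main obstacle is the $\tilde F_i$-contribution to $I_5^\eps$: because $\tilde F_i$ only takes values in $V_{\alpha_i}\supsetneq V^*$, one cannot pair it against $\tilde X^\eps - u^\eps\in L^2(0,T;V)$ in the ordinary $L^2(V^*,V)$ duality, and one instead needs the mixed-norm interpolation \eqref{eq:interpol est alpha pair} together with the critical interpolation inequality of Lemma \ref{lem:crit interpol} to obtain bounds that are uniform on the localizing events $E_{n,\eps}$. Once this is handled, the prefactor $\eta(\eps)$ drives the whole term to zero and the rest of the proof is a direct transcription of the unperturbed argument.
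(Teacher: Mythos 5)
Your proof follows the same strategy as the paper's: localize on events $\tilde E_{n,\eps}$ using Assumption \ref{ass:perturb2}\ref{it:2pert}--\ref{it:3pert}, apply Itô's formula to obtain the four original terms plus the new perturbation term $J_5^\eps$, reuse \eqref{eq: to show I^1 I^2}--\eqref{eq: to show I^4} (with \eqref{eq:lim2 tilde} replacing \eqref{eq:lim2}), and control $J_5^\eps$ by splitting $\tilde A=A_1+\sum_i\tilde F_i$, bounding the $A_1$ part via its linear $V\to V^*$ growth and the $\tilde F_i$ part via the growth bound of Lemma \ref{lem: 3.8 extra F}\ref{it:emb4} combined with the pairing estimate \eqref{eq:interpol est alpha pair} and Hölder with exponents $2/(1\pm 2\alpha_i)$, so that the $\eta(\eps)\to 0$ prefactor kills the whole term. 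This matches the paper's argument; the proof is correct.
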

\begin{proof}
  We adapt the proof of Proposition \ref{prop: X^eps weak conv} to make it suited for  $\tilde{X}^\eps$. Define $N$ by \eqref{eq: def esssup N}, which is justified by \ref{it:2pert}. 
For $\eps\in(0,\eps_0)$ and $n\in\N$,  put 
\[
\tilde{E}_{n,\eps}\coloneqq \{\|\tilde{X}^\eps\|_{\MR(0,T)}\leq n\}\cap \{\|u^{\eps}\|_{\MR(0,T)}\leq N\} 
\]
and note by \ref{it:3pert}, $\lim_{n\to\infty}\sup_{\eps\in(0,\eps_0)}\P(\tilde{E}_{n,\eps})=0$. Therefore, it suffices to prove that for all large enough $n$, we have  $\lim_{\eps\downarrow 0}\P(\{\|\tilde{X}^\eps- u^{\eps}\|_{\MR(0,T)}>\delta\}\cap \tilde{E}_{n,\eps})=0$. 

Fix arbitrary $n\geq N$. We have a.s.\ 
  \begin{align*}
       \|\tilde{X}^\eps(t)-u^{\eps}(t)\|_H^2
    &=2\int_0^t\<-A(s,\tilde{X}^\eps(s))+A(s,u^{\eps}(s)),\tilde{X}^\eps(s)-u^{\eps}(s)\?\dd s\\
    &\qquad+2\int_0^t\<\big(B(s,\tilde{X}^\eps(s))-B(s,u^{\eps}(s))\big)\Psi^\eps(s),\tilde{X}^\eps(s)-u^{\eps}(s)\?\dd s\\
    &\qquad+\eps\int_0^t \nn B(s,\tilde{X}^\eps(s))\nn_H^2\dd s\\
    &\qquad+2\sqrt{\eps}\int_0^t \<\tilde{X}^\eps(s)-u^{\eps}(s),B(s,\tilde{X}^\eps(s))\dd W(s)\?\\
    &\qquad+2\eta(\eps)\int_0^t\big\<\tilde{A}(s,\tilde{X}^\eps(s)),\tilde{X}^\eps(s)-u^{\eps}(s)\big\?\dd s\\
    &\eqqcolon J_1^\eps(t)+J_2^\eps(t)+J_3^\eps(t)+J_4^\eps(t)+J_5^\eps(t).
  \end{align*} 
  On $\tilde{E}_{n,\eps}$,  $J_1^\eps(t)+J_2^\eps(t)$ can be estimated in the same way as $I_1^\eps(t)+I_2^\eps(t)$ on $E_{n,\eps}$ in \eqref{eq: to show I^1 I^2}, replacing only $X^\eps$ by $\tilde{X}^\eps$. Then, Gr\"onwall's inequality applied pointwise on $\tilde{E}_{n,\eps}$ gives 
  \[
    \P(\{\|\tilde{X}^\eps- u^{\eps}\|_{\MR(0,T)}^2>\delta\}\cap \tilde{E}_{n,\eps})\leq \sum_{i=3}^5\P(\{\sup_{t\in[0,T]}|{J}_i^\eps(t)|>c_{n,T}{\delta}\}\cap \tilde{E}_{n,\eps}) 
  \]
  for some constant $c_{n,T}>0$. 
  Moreover, 
  $J_3^\eps(t)$ and $J_4^\eps(t)$ satisfy \eqref{eq: to show I^3} and \eqref{eq: to show I^4} respectively, 
  since  Assumption \ref{ass:perturb2}\ref{it:3pert} replaces Assumption \ref{ass:coer replace}\ref{it:3}  and \eqref{eq:lim2 tilde} replaces \eqref{eq:lim2}. 
   Thus,  it suffices to prove that for our fixed $n\geq N$, we have:
\begin{equation}\label{eq:to show J^5}
 \lim_{\eps\downarrow 0}  \P\big(\big\{\sup_{t\in[0,T]}|J_5^\eps(t)|>c_{n,T}\delta\big\} \cap \tilde{E}_{n,\eps}\big)=0 \quad \text{ for all $\delta>0$. }
\end{equation} 
  
Recalling the structure of $\tilde{A}$ and conditions in Assumption \ref{ass:perturb2}\ref{it:tildeA},  we have on $\tilde{E}_{n,\eps}$:
\begin{align*}
  |J_5^\eps(t)|&\leq  |\eta(\eps)|\int_0^t \|A_1(s,\tilde{X}^\eps(s))\|_{V^*}^2+ \|\tilde{X}^\eps(s)\|_V^2\dd s\\
  &\qquad + 2|\eta(\eps)|\Big|{\sum_{i=1}^m}\int_0^t \<\tilde{F}_i(s,\tilde{X}^\eps(s)),\tilde{X}^\eps(s)-u^{\eps}(s)\big\?\dd s\Big|\\
   &\leq |\eta(\eps)|  (C_{n,T}+1)(1+\|\tilde{X}^\eps\|_{L^2(0,t;V)}^2)\\
   &\qquad+2|\eta(\eps)| {\sum_{i=1}^m}\|\tilde{F}_i(\cdot,\tilde{X}^\eps)\|_{L^{2/(1+2\alpha_i)}(0,T;V_{\alpha_i})} \|\tilde{X}^\eps-u^{\eps}\|_{C([0,T];H)}^{2\alpha_i}\big\|\tilde{X}^\eps-u^{\eps}\big\|_{L^2(0,T;V)}^{1-2\alpha_i}
  \\
  &\leq |\eta(\eps)| (C_{n,T}+1)\big(1+n^2+  2\textstyle{\sum_{i=1}^m}(1+n^{1+2\alpha_i})(2n)^{2\alpha_i}(2n)^{1-2\alpha_i}\big) 
\\ 
&\eqqc |\eta(\eps)| K_{n,T}.
\end{align*}
Above, we applied \eqref{eq:interpol est alpha pair} for $\tilde{F}_i$ and used H\"older's inequality (with $q=2/(1+2\alpha_i)$, $q'=2/(1-2\alpha_i)$) and Lemma \ref{lem: 3.8 extra F}\ref{it:emb4}. We note that these estimates rely only on the assumed growth bounds (not on Lipschitz bounds).  
Taking the supremum over  $t\in[0,T]$ in the last estimate and using that $\lim_{\eps\downarrow0}\eta(\eps)=0$ by Assumption \ref{ass:perturb2}\ref{it:eta}, we find that \eqref{eq:to show J^5} holds  as desired. 
\end{proof}
 
\begin{proof}[\textbf{Proof of Theorem \ref{th:LDP perturbed2}}] 
The rate function and   skeleton equation stated in Theorem \ref{th:LDP perturbed2} are the same as in Theorem \ref{th:LDP general}. The LDP thus follows from Proposition \ref{prop:cpt sublevel sets} and Proposition \ref{prop:stoch continuity perturb2},  
by the   arguments in the proof of Theorem \ref{th:LDP general}. 
\end{proof}

\section{Applications}\label{sec: brusselator}  

Several examples of applications of our results were already mentioned in Examples \ref{ex:applications}, \ref{ex:applications hat F} and Remarks \ref{rem:appl strat}, \ref{ex:stratonovich coercive}. 
Here, in Subsection \ref{ss:brusselator}, we work out an example of a reaction-diffusion system which is not coercive. In addition, we discuss two coercive applications in Subsection \ref{ss:appl coer}.

\subsection{The stochastic 2D Brusselator}\label{ss:brusselator}

We study well-posedness and large deviations for a reaction-diffusion system, the stochastic 2D Brusselator. The Brusselator model was introduced in \cite{prigogine68,prigogine71} to describe chemical autocatalytic reactions and the formation of spatial patterns.  
Notably, this work was a fundamental component of the research that earned Prigogine the 1977 Nobel Prize in Chemistry. 
A special case of the Brusselator model is the Gray--Scott model, see \cite[Ex.\ 4.12]{AVreac24} and the references therein. Thus our results also apply to the corresponding stochastic 2D Gray--Scott model.

Here, similar to \cite[\S 4.3]{AVreac24}, we add stochastic perturbations to the Brusselator model, which represent thermal fluctuations or small-scale turbulence. 
We consider the following system on the two-dimensional torus $\Tor$: 
\begin{equation}\label{eq:brusselator}
     \begin{cases}
   &\dd u_1-\nabla\cdot(a_1 \nabla u_1)\dd t=[ -u_1u_2^2 +\lambda_1u_1+\lambda_2u_2+\lambda_0]\dd t\\
   &\hspace{5.3cm}+\sqrt{\eps}\sum_{n\geq 1}[(b_{n,1}\cdot\nabla)u_1+g_{n,1}(\cdot,u)]\dd w^n,   \\
   &\dd u_2-\nabla\cdot(a_2 \nabla u_2)\dd t=  [u_1u_2^2 +\mu_1u_1+\mu_2u_2+\mu_0]\dd t\\
   &\hspace{5.3cm}+\sqrt{\eps}\sum_{n\geq 1}[(b_{n,2}\cdot\nabla)u_2+g_{n,2}(\cdot,u)]\dd w^n,   \\
   &u(0)=(u_{0,1},u_{0,2}),
\end{cases}
\end{equation}
where $(w^n)$ is a sequence of independent real-valued standard Brownian motions. Fixing an orthonormal basis for $\ell^2$, we can  identify $(w^n)$ with an $\ell^2$-cylindrical Brownian motion $W=(w^n)$.

For the coefficients, we make the following assumption.  
\begin{assumption}\label{ass:brusselator}   
  \begin{enumerate}[label=\textit{(\arabic*)},ref=\ref{ass:brusselator}\textit{(\arabic*)}]
  \item\label{it:ab} For $i,j,k=1,2$: $a_i^{j,k}\col \R_+\times  \Tor\to \R$ and $b_i^j=(b_{n,i}^j)_{n\geq 1}\col \R_+\times  \Tor\to \ell^2$ are Borel measurable and bounded.   
  \item\label{it:parab} For $i=1,2$: $\nu_i>0$ and for all $t\in\R_+$, $x\in\Tor$ and $\xi\in\R^2$:  
  \[
  \sum_{j,k= 1}^2 \Big(a_i^{j,k}(t,x)-\frac{1}{2}\sum_{n\geq 1}b_{n,i}^{j}(t,x)b_{n,i}^{k}(t,x)\Big)\xi_j\xi_k\geq \nu_i|\xi|^2.
  \]
  \item\label{it:g}  
  For $i=1,2$: $g_i=(g_{n,i})_{n\geq 1}\col \R_+\times \Tor\to \ell^2$ is Borel measurable, 
  $g_i(\cdot,0)\in L^\infty(\R_+\times \Tor;\ell^2)$  
  and for all $t\in\R_+$, $x\in\Tor$ and $y,z\in\R^2$: 
  \[
  \|g_i(t,x,y)-g_i(t,x,z)\|_{\ell^2}\leq C(1+|y|+|z|)|y-z|.
  \]   
  \item\label{it:alphabeta} For $i=0,1,2$: $\lambda_i,\mu_i\col \R_+\times \Tor\to \R$ is  Borel measurable and bounded. 
  \item\label{it:Bgrowth} There exist $M,\delta>0$ such that for $i=1,2$, the following holds on $\R_+\times\Tor$, for any fixed $y\in \R_+\times\R_+$: 
      \[
        \frac{1}{4(\nu_i-\delta)}\Big(\sum_{n\geq 1}|b_{n,i}(\cdot)||g_{n,i}(\cdot,y)|\Big)^2
        +\frac{1}{2}\|(g_{n,i}(\cdot,y))_{n\geq 1})\|_{\ell^2}^2\leq N_i(y),
      \]
         where for some $\epsilon\in(0,1]$: 
        \begin{align}\label{eq:Ni}
        \begin{split}
          &N_1(y)\ceqq M[1+|y_1|^2]+(1-\epsilon)|y_1|^2|y_2|^2, \\
          &N_2(y)\ceqq M[1+(1+|y_1|^2)|y_2|^2+|y_1||y_2|^3+|y_1|^4].
          \end{split}
        \end{align} 
  \end{enumerate}
\end{assumption}

\begin{remark}
Assumption \ref{ass:brusselator} is similar to \cite[Ass.\ 2.1]{AVreac24} with $h=3$. In 2D, $h=3$ is not allowed in  \cite[Ass.\ 2.2(ii)]{AVreac24}, but with the use of the flexible $\alpha$-(sub)criticality condition \eqref{eq: subcrit cond alpha} for $\hat{F}$ in Section \ref{sec:main results}, this critical case can be treated, as we will prove below.  Also, our assumption resembles \cite[Ass.\ 8.1(1)(2)(3)]{BGV} with $\rho_1=2$ and $\rho_3=1$, but the latter concerns the scalar case whereas above we have a two-component system, and the boundary conditions are different. 
\end{remark}

The abstract theory will be applied with   
  $U\ceqq \ell^2$ for the noise,  a Gelfand triple $(V,H,V^*)$ defined by 
\begin{equation}\label{eq:Gelfand}
    V=H^1(\Tor;\R^2),\qquad H= L^2(\Tor;\R^2), \qquad V^*= H^{-1}(\Tor;\R^2),
\end{equation} 
and the following coefficients:   
\begin{align}\label{eq: defs coeff}
\begin{split}
&\begin{cases}
 &A_0(t)u\ceqq[-\nabla\cdot(a_1(t,\cdot)\nabla u_1), -\nabla\cdot(a_2(t,\cdot)\nabla u_2)],\\[.2em]
 &B_0(t)u\ceqq\big[((b_{n,1}(t,\cdot)\cdot\nabla)u_1)_n, ((b_{n,2}(t,\cdot)\cdot\nabla)u_2)_n\big],\\[.2em]
 &\hat{F}(t,u)\ceqq F(t,u)+\breve{F}(u), \\[.2em]
 &G(t,u) \ceqq\big[((g_{n,1}(t,\cdot,u))_n, ((g_{n,2}(t,\cdot,u))_n\big],\\[.2em]
& \hat{f}=g=0,
\end{cases}\\& 
\begin{cases}
  &F(t,u) \ceqq[\lambda_1(t,\cdot)u_1+\lambda_2(t,\cdot)u_2
 +\lambda_0(t,\cdot),\mu_1(t,\cdot)u_1+\mu_2(t,\cdot)u_2+\mu_0(t,\cdot)],\\ 
 &\breve{F}(u) \ceqq [-u_1u_2^2, u_1u_2^2].
\end{cases}
\end{split}
\end{align}
The Gelfand triple of \eqref{eq:Gelfand} corresponds to an analytically weak setting. Note that by integration by parts and the periodic boundary conditions, we have for $u,v\in H^1(\Tor;\R^2)$:
\begin{equation}
\langle A_0(t)u,v\rangle 
=\textstyle{\sum_{i=1}^2 }\big(  (a_i(t,\cdot)\nabla u_i),\nabla v_i\big)_{L^2(\Tor;\R^2)},  
\end{equation}
where $\<\cdot,\cdot\?$ is the duality pairing between $V$ and $V^*$ induced by $H$. 

We can already see that the full system \eqref{eq:brusselator} is non-coercive in this Gelfand triple setting, due to the term $u_1u_2^2$  in the equation for $u_2$.  
Mathematically, the latter term gives rise to a super-quadratic term $u_1u_2^3$ in $\<A(u),u\?$, which cannot be controlled in the sense of \eqref{eq:def coercive}. In chemical sense, it encodes autocatalysis (positive feedback). 
Still, we will be able to prove non-blow up by exploiting the dissipative effect of the nonlinearity $-u_1u_2^2$ appearing in the equation for $u_1$.

We define  strong solutions to \eqref{eq:brusselator} analogous to Definition \ref{def:sol}, in which we let $(V,H,V^*)$ be as in  \eqref{eq:Gelfand}. We call $u\col\R_+\times\Om\to H$ a global solution if $u|_{[0,T]}$ is a strong solution on $[0,T]$ for every $T>0$. 
Our result is now as follows. To the best of our knowledge, under the stated assumptions, both the global well-posedness part and the LDP are new. 

\begin{theorem}[Global well-posedness and LDP]\label{th:brusselator}
Let Assumption \ref{ass:brusselator} hold and let $u_0\in L^2(\Tor;\R^2)$. Then for each $\eps\in[0,1]$, the stochastic 2D Brusselator \eqref{eq:brusselator} has a unique strong solution $u^\eps$, with a.s. $u\in  C([0,T];L^2(\Tor;\R^2))\cap L^2(0,T;H^1(\Tor;\R^2))\eqqc \MR(0,T)$ for every $T>0$.  

Furthermore, $(u^\eps)$ satisfies the large deviation principle on $\MR(0,T)$, and the rate function $I\col \MR(0,T)\to[0,+\infty]$ is given by 
\begin{equation*} 
I(z)=\frac{1}{2}\inf\Big\{\textstyle{\int_0^T}\|\psi(s)\|_{\ell^2}^2\dd s : \psi\in L^2(0,T;\ell^2), \, z=z^{\psi}\Big\},
\end{equation*}
where $\inf\varnothing\coloneqq +\infty$ and $z^\psi $ is the unique strong solution to the skeleton equation \eqref{eq:brusselator skeleton}. 
\end{theorem}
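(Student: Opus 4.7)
The plan is to invoke Theorem \ref{th:LDP general} with the Gelfand triple \eqref{eq:Gelfand}, noise space $U=\ell^2$ and coefficients \eqref{eq: defs coeff}; accordingly, I must verify Assumption \ref{ass:critvarsettinglocal} together with Assumption \ref{ass:coer replace}\ref{it:1}--\ref{it:3}. For Assumption \ref{ass:critvarsettinglocal}, coercivity of the linear part \ref{ass:critvarsettinglocal}\ref{it:coercivelinear} follows by integration by parts, the identity $\nn B_0(t)v\nn_H^2=\sum_{i,n}\|(b_{n,i}\cdot\nabla)v_i\|_{L^2}^2$, and Assumption \ref{ass:brusselator}\ref{it:parab}; the growth and Lipschitz bounds on $A_0,B_0$ come from Assumption \ref{ass:brusselator}\ref{it:ab}, and those on $G$ from Assumption \ref{ass:brusselator}\ref{it:g} (with $\beta=\tfrac12$, $\rho=1$). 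The affine term $F$ fits trivially with $\alpha=0$. The decisive new ingredient is the cubic term $\breve{F}(u)=[-u_1u_2^2,u_1u_2^2]$, which I handle by \emph{saturating} the $\alpha$-(sub)criticality condition \eqref{eq: subcrit cond alpha} with $\alpha=\tfrac12$, $\hat{\beta}=\tfrac56$, $\hat{\rho}=2$: since $V_{1/2}=H$ and $V_{5/6}=H^{2/3}(\Tor;\R^2)$, the 2D Sobolev embedding $H^{2/3}\into L^6$ combined with H\"older's inequality yields $\|\breve{F}(u)\|_H\lesssim\|u\|_{L^6}^3\lesssim\|u\|_{V_{5/6}}^3$, with the matching local Lipschitz bound obtained in the same way.

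Assumption \ref{ass:coer replace}\ref{it:1} will be verified through Lemma \ref{lem:suff non-blow up}. The decisive structural feature is that the bad term for the second component is the \emph{good} (dissipative) term for the first: applying It\^o's formula to $\tfrac12\|u_1\|_{L^2}^2$ yields, thanks to the negative reaction $-\int u_1^2 u_2^2$ and the bound $N_1$ from \eqref{eq:Ni}, not only an $L^2(0,T;H^1)$-estimate for $u_1$ but also control of $\|u_1 u_2\|_{L^2(0,T;L^2)}$. Feeding this into the It\^o identity for $\tfrac12\|u_2\|_{L^2}^2$, the troublesome super-quadratic drift $\int u_1 u_2^3$ is estimated as $\|u_1u_2\|_{L^2}\|u_2\|_{L^4}^2$ and, using the 2D Gagliardo--Nirenberg inequality $\|u_2\|_{L^4}^2\lesssim\|u_2\|_{L^2}\|u_2\|_{H^1}$, split by Young's inequality so that a fraction of $\|u_2\|_{H^1}^2$ is absorbed into the parabolic term; the residual factor $\|u_1 u_2\|_{L^2}^2\|u_2\|_{L^2}^2$ lives in $L^1(0,T)$ in time and closes by stochastic Gronwall. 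The quartic contributions from $N_2$ are handled analogously, and Assumption \ref{ass:brusselator}\ref{it:Bgrowth} is precisely the chaining condition between $N_1$ and $N_2$ that makes the cascade consistent.

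Assumption \ref{ass:coer replace}\ref{it:2} is the deterministic twin of the above: the control drift $B(u^\psi)\psi$ in the skeleton equation is absorbed into the coercive $\|u^\psi\|_V^2$-term by Young's inequality at the cost of $\|\psi\|_{\ell^2}^2\|u^\psi\|_H^2$, and (deterministic) Gronwall produces a bound of the required form $C_x(T,\|\psi\|_{L^2(0,T;\ell^2)})$, non-decreasing in both arguments. Assumption \ref{ass:coer replace}\ref{it:3} follows by repeating the stochastic energy estimate on \eqref{eq:SPDE tilted} with $\bphieps$ in place of $\psi$: the uniform $L^2$-bound on $\bphieps$ makes the extra drift $B(X^\eps)\bphieps$ harmless, and BDG plus stochastic Gronwall produce uniform-in-$\eps$ moment bounds in $\MR(0,T)$, whence boundedness in probability. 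Once Assumption \ref{ass:coer replace} is established, the LDP on $\MR(0,T)$ with the stated rate function follows directly from Theorem \ref{th:LDP general}.

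The main obstacle is the step that closes the $u_2$-energy identity against the borderline drift $u_1u_2^2$ and the quartic noise contribution $N_2$: this is where the coupled, weighted combination of the two $L^2$-energy identities and the chaining constraint in Assumption \ref{ass:brusselator}\ref{it:Bgrowth} are genuinely used, and it is also the reason the non-zero value $\alpha=\tfrac12$ in the extended $\hat{F}$-framework of Assumption \ref{ass:critvarsettinglocal} is indispensable for treating \eqref{eq:brusselator} in the analytically weak setting \eqref{eq:Gelfand}.
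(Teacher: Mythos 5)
Your overall strategy matches the paper's proof exactly: invoke Theorem \ref{th:LDP general} after verifying Assumption \ref{ass:critvarsettinglocal} and Assumption \ref{ass:coer replace}\ref{it:1}--\ref{it:3}, with the decisive structural observation that the dissipative reaction $-\int u_1^2u_2^2$ in the $u_1$-energy identity controls $\|u_1u_2\|_{L^2(0,T;L^2)}$, which in turn tames the borderline drift $u_1u_2^3$ and the quartic contributions in the $u_2$-energy identity. Your choices $\alpha=\tfrac12$, $\hat{\beta}=\tfrac56$, $\hat{\rho}=2$ for the cubic term and the Gagliardo--Nirenberg splitting $\|u_2\|_{L^4}^2\lesssim\|u_2\|_{L^2}\|u_2\|_{H^1}$ are precisely what the paper uses. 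However, two points need correction.

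First, a small error: your choice $\beta=\tfrac12$, $\rho=1$ for $G$ is not admissible. Assumption \ref{ass:critvarsettinglocal}\ref{it:growth AB} requires $\beta_i\in(\tfrac12,1)$, excluding $\tfrac12$; and the chain $\|(1+|y|+|z|)|y-z|\|_{L^2}\leq\|1+|y|+|z|\|_{L^4}\|y-z\|_{L^4}$ needs the 2D embedding $H^{-1+2\beta}(\Tor)\hookrightarrow L^4(\Tor)$, which forces $-1+2\beta\geq\tfrac12$, i.e.\ $\beta\geq\tfrac34$. The correct choice is $\beta=\tfrac34$, $\rho=1$, for which \eqref{eq: subcrit cond} is still satisfied with equality.

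Second, and more substantively, the step where you claim that BDG plus stochastic Gronwall ``produce uniform-in-$\eps$ \emph{moment} bounds in $\MR(0,T)$, whence boundedness in probability'' would fail for the second component. The Gronwall weight for $u_2^\eps$ is $C_{R,M,\kappa}+C_{\kappa,M}\|u_1^\eps u_2^\eps\|_{L^2}^2+\|\bphieps\|_{\ell^2}^2$, whose pathwise time-integral is genuinely random, and passing through Gronwall's inequality produces a factor $\exp\big(C\int_0^T\|u_1^\eps u_2^\eps\|_{L^2}^2\dd s\big)$ that has no obvious moments: the $u_1$-estimate only controls $\int_0^T\|u_1^\eps u_2^\eps\|_{L^2}^2\dd s$ in probability (or in $L^1(\Om)$ at best), not its exponential. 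This is exactly the non-coercive phenomenon \eqref{eq:weak energy} the abstract framework is designed for, and the paper instead uses the tail-probability form of the stochastic Gronwall inequality \cite[Cor.\ 5.4b)]{geiss24}, choosing auxiliary truncation parameters $\Gamma=\tfrac14\log\gamma$ and $w=\gamma^{1/4}$ so that the three resulting probability terms all vanish as $\gamma\to\infty$. Assumption \ref{ass:coer replace}\ref{it:3} requires only such bounds in probability, so the conclusion is unaffected, but your stated route through moment bounds would not close the argument.
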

\begin{remark} 
 For the LDP above, it would   suffice to assume the parabolicity of Assumption \ref{it:parab} without the   contribution from the $B_0$-term. Indeed, one can then reduce to $\delta B$ for a small $\delta>0$, following the arguments at the end of \cite[\S6]{AT25} and the proof of Corollary \ref{cor:LDPcoerciveperturbed1}.   However, Assumption \ref{it:parab} as stated is crucial for global well-posedness across all $\eps\in[0,1]$.
\end{remark}
 
The result above will be derived from Theorem \ref{th:LDP general}. 
The conditions appearing in Theorem \ref{th:LDP general} will be verified across the next few lemmas. After that, the proof of Theorem \ref{th:brusselator} is given at the end of this subsection.

We begin by verifying the local well-posedness conditions. Since $A_0$ and $B_0$ are linear, we write $A_0(t)u$ and  $B_0(t)u$, rather than $A_0(t,u)u$ and $B_0(t,u)u$. 

\begin{lemma}[Local well-posedness]\label{lem:bruss loc}
Let $(V,H,V^*)$ and $(A_0,B_0,\hat{F},G,\hat{f},g)$  be as in  \eqref{eq:Gelfand} and  \eqref{eq: defs coeff} and let Assumption \ref{ass:brusselator} hold. Then Assumption \ref{ass:critvarsettinglocal} is satisfied. In particular, local well-posedness and existence of a unique maximal solution holds for \eqref{eq:brusselator} with $\eps\in[0,1]$ and for the skeleton equation \eqref{eq:brusselator skeleton}. 
\end{lemma}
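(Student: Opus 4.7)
The plan is to verify the conditions of Assumption \ref{ass:critvarsettinglocal} one by one for the coefficients \eqref{eq: defs coeff}, after which local well-posedness of \eqref{eq:brusselator} (for all $\eps\in[0,1]$) and of the skeleton equation will follow at once from Remark \ref{rem: SPDE loc well-posed} and Proposition \ref{prop: blow up criterion} respectively. Measurability is immediate from Assumption \ref{ass:brusselator}, and $\hat f=g=0$ trivially fits the required integrability. For the coercivity of the linear part, I would compute componentwise and interchange the $\ell^2$-sum with the spatial integral to obtain
\[
\<A_0(t)v,v\?-\tfrac12\nn B_0(t)v\nn_H^2 \;=\; \sum_{i=1}^2\int_\Tor\sum_{j,k=1}^2\Bigl(a_i^{j,k}(t,x)-\tfrac12\sum_{n\geq1} b_{n,i}^j(t,x) b_{n,i}^k(t,x)\Bigr)\partial_j v_i\,\partial_k v_i\,\ddd x,
\]
which by the stochastic parabolicity condition is bounded below by $(\min_i\nu_i)\|\nabla v\|_{L^2}^2 = (\min_i\nu_i)(\|v\|_V^2-\|v\|_H^2)$. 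The growth and Lipschitz bounds for $A_0,B_0$ then follow from boundedness of $a_i$ and of $\|b_i\|_{L^\infty(\R_+\times\Tor;\ell^2)}$, using that both operators are linear in $w$ and independent of $u$.

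For $G$, the Lipschitz assumption on $g_i$ combined with $\|g_i(\cdot,0)\|_\infty<\infty$ yields the pointwise bound $\|g_i(t,x,y)\|_{\ell^2}\lesssim 1+|y|^2$, hence $\nn G(t,u)\nn_H^2\lesssim 1+\|u\|_{L^4(\Tor;\R^2)}^4$. The 2D Sobolev embedding $H^{1/2}(\Tor)\into L^4(\Tor)$, combined with the identification $V_{3/4}=[H^{-1},H^1]_{3/4}=H^{1/2}$, then gives $\nn G(t,u)\nn_H\lesssim 1+\|u\|_{3/4}^2$, which fits the required form with $\beta=3/4$ and $\rho=1$; the (sub)criticality condition $(1+\rho)(2\beta-1)=1$ is thereby saturated. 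The matching Lipschitz estimate follows from $\int (1+|u|+|v|)^2|u-v|^2\,\ddd x\leq (1+\|u\|_{L^4}+\|v\|_{L^4})^2\|u-v\|_{L^4}^2$ and the same Sobolev embedding.

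The real obstacle is $\hat F=F+\breve F$, which I would split into two indices $i=1,2$. The affine part $F$ maps into $L^2=V_{1/2}\into V^*$, so the choice $\alpha_1=0$, $\hat\beta_1=3/4$, $\hat\rho_1=1$ trivially yields both the growth and Lipschitz bounds, and the $\alpha$-(sub)criticality condition \eqref{eq: subcrit cond alpha} reduces to $1\leq 1$. For the cubic part $\breve F(u)=[-u_1u_2^2,u_1u_2^2]$, H\"older's inequality gives $\|u_1u_2^2\|_{L^{4/3}(\Tor)}\leq\|u\|_{L^4(\Tor;\R^2)}^3$, and the duality of $H^{1/2}\into L^4$ yields $L^{4/3}(\Tor)\into H^{-1/2}(\Tor)=V_{1/4}$. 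This suggests taking $\alpha_2=1/4$, $\hat\beta_2=3/4$, $\hat\rho_2=2$, under which $\|\breve F(u)\|_{1/4}\lesssim 1+\|u\|_{3/4}^3$ holds and the $\alpha$-(sub)criticality condition $(1+\hat\rho_2)(2\hat\beta_2-1)=3/2=1+2\alpha_2$ is met precisely at equality. This is exactly where the additional flexibility of non-zero $\alpha_i$ introduced in Assumption \ref{ass:critvarsettinglocal} is indispensable, since the standard criticality condition (with $\alpha=0$) would fail for $\breve F$. The corresponding Lipschitz bound follows from the factorization $u_1u_2^2-v_1v_2^2=(u_1-v_1)u_2^2+v_1(u_2-v_2)(u_2+v_2)$ and two applications of H\"older in $(L^4,L^4,L^4)$, giving the required $(1+\|u\|_{3/4}^2+\|v\|_{3/4}^2)\|u-v\|_{3/4}$ form.
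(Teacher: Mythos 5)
Your proof is correct and reaches the same conclusion, but you verify the $\hat F$-conditions with a different parametrization for the cubic term $\breve F$ than the paper does. You map $\breve F$ into $V_{1/4}=H^{-1/2}(\Tor)$ via the dual Sobolev embedding $L^{4/3}(\Tor)\into H^{-1/2}(\Tor)$ (coming from $V_{3/4}=H^{1/2}\into L^4$), taking $(\alpha_2,\hat\beta_2,\hat\rho_2)=(1/4,3/4,2)$; the paper instead maps $\breve F$ into $V_{1/2}=L^2(\Tor)$ using $V_{5/6}=H^{2/3}\into L^6$, taking $(\alpha_2,\hat\beta_2,\hat\rho_2)=(1/2,5/6,2)$. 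Both choices sit exactly on the critical endpoint of \eqref{eq: subcrit cond alpha} (you get $3/2=3/2$, the paper gets $2=2$), and both genuinely need $\alpha_2>0$ --- indeed, as you observe, $\alpha_2=0$ forces $\hat\beta_2\leq 2/3$, and then no Sobolev exponent closes the cubic estimate, so the flexible $\alpha$-criticality of Assumption \ref{ass:critvarsettinglocal}\ref{it:hat F} is indispensable. For the Lipschitz bound you use an explicit algebraic factorization of $u_1u_2^2-v_1v_2^2$, whereas the paper invokes the mean value theorem to get $|\breve F(y)-\breve F(z)|\lesssim(|y|^2+|z|^2)|y-z|$; these are interchangeable. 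The verification of \ref{ass:critvarsettinglocal}\ref{it:coercivelinear} (via Assumption \ref{it:parab}), the growth/Lipschitz estimates for $A_0,B_0$ (via boundedness), and the $G$-estimates with $(\beta,\rho)=(3/4,1)$ match the paper essentially verbatim. The final appeal to Remark \ref{rem: SPDE loc well-posed} and Theorem \ref{th: local well posedness skeleton} (you cite Proposition \ref{prop: blow up criterion}, which subsumes it) is the same.
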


\begin{proof}
The measurability conditions in Assumption \ref{ass:critvarsettinglocal} trivially follow from those in Assumption \ref{ass:brusselator}. Moreover, by Assumption \ref{it:parab}, we have
\begin{align}
\langle A_0&(t)u,u\rangle 
-\tfrac{1}{2}\|B_0(t)u\|_{\mathscr{L}_2(\ell^2,H)}^2 \notag\\
&=\sum_{i=1}^2 \big(  (a_i(t,\cdot)\nabla u_i),\nabla u_i\big)_{L^2(\Tor;\R^2)}
-\sum_{i=1}^2\tfrac{1}{2}\sum_{n\geq 1}\|(b_{n,i}(t,\cdot)\cdot\nabla)u_i\|_{L^2(\Tor)}^2   \notag\\
&=\sum_{i=1}^2\sum_{j,k=1}^{2}\bigg[\int_{\Tor}  a_{i}^{j,k}(t,x)\,\partial_j u_i(x)\,\partial_k u_i(x) -\frac{1}{2} \big( \sum_{n \geq 1} b_{n,i}^{j}(t,x)\, b_{n,j}^{k}(t,x) \big)
\, \partial_j u_i(x)\, \partial_k u_i(x) \dd x \bigg]\notag\\
&\geq \nu_1\|\nabla u_1\|_{L^2(\Tor;\R^2)}^2+\nu_2\|\nabla u_2\|_{L^2(\Tor;\R^2)}^2  \label{eq:linearcoerbrusselator}\\
&\geq (\nu_1\wedge \nu_2)\|u\|_{V}^2-(\nu_1\wedge \nu_2)\|u\|_{H}^2, \notag 
\end{align}  
so Assumption \ref{ass:critvarsettinglocal}\ref{it:coercivelinear} holds. Concerning Assumption \ref{ass:critvarsettinglocal}\ref{it:growth AB}, the growth and Lipschitz estimates for $A_0$ and $B_0$ follow  from the boundedness of $a_i$ and $(b_{n,i})_n$ (Assumption \ref{it:ab}). 

Let us now verify the growth and Lipschitz estimates for $\hat{F}$ and $G$, starting with the most difficult part of $\hat{F}$. 
For $\hat{F}_2\ceqq \breve{F}$, we first note that for all $y,z\in\R^2$:   
\begin{align*} 
|\breve{F}(y)-\breve{F}(z)|\leq 2|y_1y_2^2-z_1z_2^2|\leq 4(|y|^2+|z|^2)|y-z|,
\end{align*}
applying e.g.\  the mean value theorem to $h(y)\ceqq y_1y_2^2$ for the last inequality.  
Therefore, as in \cite[Th.\ 8.2, p.\ 53]{BGV}, we find 
\begin{align*} 
\|\breve{F}(y)-\breve{F}(z)\|_{V_{1/2}}\lesssim\|(|y|^2+|z|^2)|y-z|\|_{L^2(\Tor)}
&\lesssim(\|y\|_{L^6(\Tor;\R^2)}^2+\|z\|_{L^6(\Tor;\R^2)}^2)\|y-z\|_{L^6(\Tor;\R^2)}\\
&\lesssim (\|y\|_{V_{\hat{\beta}}}^2+\|z\|_{V_{\hat{\beta}}}^2)\|y-z\|_{V_{\hat{\beta}}},
\end{align*}
provided that $-2+2\hat{\beta}\geq -\frac13$  (for the Sobolev embedding), noting that  $V_{\hat{\beta}}=H^{-1+2\hat{\beta}}(\Tor)$. Thus we can let $\hat{\beta}_2=5/6$, and then, with $\hat{\rho}_2=2$ and $\alpha_2=1/2$, \eqref{eq: subcrit cond alpha} is satisfied (critical case).  Furthermore, $\breve{F}(0)=0$, so the growth bound follows from the Lipschitz bound. 

Next, we can estimate $G$ and  $\hat{F}_1\ceqq F$ as in \cite[\S5.3, p.\ 993, 994]{AV22variational}. 
Note that  $F(\cdot,0)=(\lambda_0, \mu_0)$ and $G(\cdot,0)$ are bounded by Assumption \ref{ass:brusselator}. Thus the required growth bound for $F$ and $G$ follow from the Lipschitz bounds derived below.  
For $\hat{F}_1= F$, fixing 
\begin{equation}\label{eq:def R}
  {R}\ceqq \max_{i=0,1,2}\|\lambda_i\|_\infty+\|\mu_i\|_\infty,
\end{equation} 
we have   $\|{F}(t,y)-{F}(t,z)\|_{V^*} \leq {R} \|y-z\|_{H}$.  
Consequently, Assumption \ref{ass:critvarsettinglocal}\ref{it:hat F} holds with $\alpha_1=0$ therein and fixing any $\hat{\beta}_1\in(\frac12,1)$ and $\hat{\rho}_1\in(0,({2\hat{\beta}_1-1})^{-1}-1]$.        
For $G$, we have by Assumption \ref{it:g}, H\"older's inequality and a Sobolev embedding:
\begin{align*} 
\|G(t,y)-G(t,z)\|_{\mathscr{L}_2(\ell^2;H)}&\leq 2C\|(1+|y|+|z|)|y-z|\|_{L^2(\Tor)}\\
&\lesssim\|1+|y|+|z|\|_{L^4(\Tor)}\|y-z\|_{L^4(\Tor;\R^2)}\\
&\lesssim (1+\|y\|_{\beta}+\|z\|_{\beta})\|y-z\|_{\beta},
\end{align*}
provided that $-2+2\beta\geq -\tfrac12$. Choosing $\beta=3/4$ and $\rho=1$,  \eqref{eq: subcrit cond} is satisfied and the Lipschitz bound for $G$ in Assumption \ref{ass:critvarsettinglocal}\ref{it:growth AB} holds. 

The last claim of the lemma follows from Remark \ref{rem: SPDE loc well-posed} and Theorem \ref{th: local well posedness skeleton}.
\end{proof} 

Next, we prove global well-posedness, with the help of Lemma \ref{lem:suff non-blow up}. Here and in the subsequent proofs, we use the letter $C$ with subscripts to denote positive constants depending only on the parameters indicated. 

\begin{lemma}[Global well-posedness]\label{lem:glob bruss}
Let Assumption \ref{ass:brusselator} hold. Then  Assumption \ref{ass:coer replace}\ref{it:1} is satisfied for the two-dimensional stochastic Brusselator \eqref{eq:brusselator}. 
\end{lemma}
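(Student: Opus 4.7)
The plan is to verify the blow-up criterion of Lemma \ref{lem:suff non-blow up}. Local well-posedness and the existence of a unique maximal solution $(u,\sigma)$ to \eqref{eq:brusselator} are already provided by Lemma \ref{lem:bruss loc}, so it suffices to show that, for every $\eps\in(0,1]$ and $T>0$,
\[
\sup_{t\in[0,\sigma\wedge T)}\|u(t)\|_H^2+\int_0^{\sigma\wedge T}\|u\|_V^2\dd t<\infty\quad\text{a.s.}
\]
I would localize by a sequence of stopping times $(\sigma_n)$ announcing $\sigma$ and work component-wise on $\|u_1(t)\|_{L^2(\Tor)}^2$ and $\|u_2(t)\|_{L^2(\Tor)}^2$ via the It\^o formula.

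For the quasi-linear/stochastic part, Assumption \ref{it:parab} is chosen so that for each $\eps\in(0,1]$ the diffusion $\langle A_0(t)u_i,u_i\rangle$ absorbs the quadratic It\^o correction from the transport noise, leaving an effective coercivity $\nu_i\|\nabla u_i\|_{L^2}^2$. Young's inequality with weight $\nu_i-\delta$ then re-absorbs the cross term $\eps\sum_n\langle(b_{n,i}\cdot\nabla)u_i,g_{n,i}\rangle$ into an additional $(\nu_i-\delta)\|\nabla u_i\|_{L^2}^2$, at the cost of $\tfrac{\eps}{4(\nu_i-\delta)}\int(\sum_n|b_{n,i}||g_{n,i}|)^2\dd x$, so that Assumption \ref{it:Bgrowth} bounds the remaining noise contribution by $\eps\int N_i(u)\dd x$. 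The It\^o formula for $\|u_i\|_{L^2}^2$ thus takes the schematic form
\[
\dd\|u_i\|_{L^2}^2+2\delta\|\nabla u_i\|_{L^2}^2\dd t\leq 2\langle u_i,\hat F_i(u)\rangle\dd t+2\eps\textstyle\int N_i(u)\dd x\dd t+\dd M_t^{(i)},
\]
with $M^{(i)}$ a real-valued local martingale.

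For $i=1$ the reactive contribution supplies $-\|u_1u_2\|_{L^2}^2$ while the explicit $N_1$ from \eqref{eq:Ni} contributes at most $(1-\epsilon)\|u_1u_2\|_{L^2}^2$ plus terms linear in $X\coloneqq\|u\|_H^2$, so the two combine into a \emph{strictly residual} dissipation $-\epsilon\|u_1u_2\|_{L^2}^2$. For $i=2$ the super-cubic term $\int u_1u_2^3\dd x$ (together with its copy inside $N_2$) is split by Young's inequality between $\epsilon\|u_1u_2\|_{L^2}^2$ and $C\|u_2\|_{L^4}^4$; the former is absorbed by the $i=1$ dissipation, and the latter is controlled via the 2D Ladyzhenskaya inequality $\|u_2\|_{L^4}^4\leq C\|u_2\|_{L^2}^2\|u_2\|_{H^1}^2$; the $Mu_1^4$ term in $N_2$ is handled analogously via $\|u_1\|_{L^4}^4\leq C\|u_1\|_{L^2}^2\|u_1\|_{H^1}^2$. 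Setting $Y\coloneqq\|u\|_V^2$ and summing the two estimates, I expect to arrive at a stochastic differential inequality of the form
\[
\dd X+\delta Y\dd t\leq C(1+X)\dd t+CXY\dd t+\dd M_t,
\]
with $M$ a real local martingale whose quadratic variation is controlled by $X$ and the noise norms.

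The final step is a standard stochastic Gr\"onwall / truncation argument: introducing the stopping time $\tau_K\coloneqq\inf\{t:X(t)>K\}\wedge\sigma_n\wedge T$ with $K$ small enough that $CK<\delta$, the nonlinear $CXY$-prefactor becomes strictly smaller than $\delta Y$ and can be absorbed into the left-hand side on $[0,\tau_K]$; a Gr\"onwall lemma combined with the Burkholder--Davis--Gundy inequality (in the spirit of the reaction--diffusion arguments in \cite[\S4]{AVreac24}) then bounds $\sup_{t\leq\tau_K}X(t)+\int_0^{\tau_K}Y\dd t$ in probability. Showing that $\P(\tau_K<\sigma_n\wedge T)\to 0$ as $K\to\infty$ uniformly in $n$, and sending $n\to\infty$, yields the desired almost-sure finiteness. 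The main obstacle is the super-cubic reactive term $u_1u_2^3$ in the $u_2$ equation, which is \emph{critical} in two dimensions and cannot be tamed by the $u_2$-dissipation alone: the argument works only because of the Brusselator's sign-compensating coupling structure and because the precise form of Assumption \ref{it:Bgrowth} (in particular the factor $(1-\epsilon)<1$ in $N_1$) is calibrated so that the noise does not destroy the $L^2_tL^2_x$-control of $u_1u_2$ coming from the $-u_1u_2^2$ in the $u_1$ equation.
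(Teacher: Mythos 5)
Your overall strategy matches the paper's: invoke the non--blow-up criterion of Lemma \ref{lem:suff non-blow up}, apply the It\^o formula component-wise, and exploit the hierarchical structure whereby the residual dissipation $-\epsilon\|u_1u_2\|_{L^2}^2$ from the $u_1$-equation (after subtracting the $(1-\epsilon)$-contribution from $N_1$) supplies the $L^1_t$-control needed to close the $u_2$-estimate. The role of Assumption \ref{it:Bgrowth} and the use of a stochastic Gr\"onwall argument are also as in the paper.

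However, there is a genuine gap in your treatment of the super-cubic term $\int_{\Tor}u_1u_2^3\,\dd x$. You split it via Young's inequality as $\|u_1u_2\|_{L^2}\|u_2\|_{L^4}^2\leq \epsilon'\|u_1u_2\|_{L^2}^2+C_{\epsilon'}\|u_2\|_{L^4}^4$ and then apply Ladyzhenskaya to bound $\|u_2\|_{L^4}^4\lesssim\|u_2\|_{L^2}^2\|u_2\|_{H^1}^2$. Since $C_{\epsilon'}$ necessarily blows up as $\epsilon'$ shrinks, the resulting term $C\|u_2\|_{L^2}^2\|u_2\|_{H^1}^2$ has a \emph{large} coefficient that cannot be absorbed by the fixed dissipation $\delta\|\nabla u_2\|_{L^2}^2$; your unified inequality $\dd X+\delta Y\,\dd t\leq C(1+X)\,\dd t+CXY\,\dd t+\dd M_t$ is therefore genuinely supercritical. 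The subsequent truncation argument is internally inconsistent: you need $K$ small ($CK<\delta$) to absorb $CXY$ on $[0,\tau_K]$, yet you then want $K\to\infty$ to rule out blow-up. These two requirements cannot be reconciled, so the argument does not close.

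The paper avoids this by performing the interpolation \emph{before} Young's inequality: $\|u_1u_2\|_{L^2}\|u_2\|_{L^4}^2\lesssim\|u_1u_2\|_{L^2}\|u_2\|_{L^2}\|u_2\|_{H^1}\leq C_\kappa\|u_1u_2\|_{L^2}^2\|u_2\|_{L^2}^2+\kappa\|\nabla u_2\|_{L^2}^2+\kappa\|u_2\|_{L^2}^2$. This produces a \emph{Gr\"onwall multiplier} $C_\kappa\|u_1u_2\|_{L^2}^2$ acting on $\|u_2\|_{L^2}^2$ --- and this multiplier is a.s.\ in $L^1_t$ precisely by the $u_1$-estimate --- plus a term $\kappa\|\nabla u_2\|_{L^2}^2$ whose coefficient \emph{can} be taken small and absorbed by the dissipation. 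Analogously, $\|u_1\|_{L^4}^4\lesssim\|u_1\|_{L^2}^2\|u_1\|_{H^1}^2$ appears only as an \emph{inhomogeneity} in the $u_2$-Gr\"onwall inequality, and its time integral is controlled separately by the $u_1$-estimate. The two Gr\"onwall arguments are thus applied sequentially (with the second one using the stochastic Gr\"onwall lemma of \cite[Cor.~5.4b)]{geiss24} at parameters $\Gamma,w$ chosen to exploit these a priori controls), rather than merged into a single $X,Y$ inequality. If you restructure the $u_2$-estimate along these lines, your argument goes through.
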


\begin{proof}
Let  $\eps\in[0,1]$ be arbitrary and let $(u,\sigma)$ be the maximal solution for \eqref{eq:brusselator}.  
Write $A=[A_1, A_2]$ and $B=[B_1,B_2]$ for the components of the coefficients, and similarly $F= [F_1,F_2]$, $\breve{F}=[\breve{F}_1,\breve{F}_2]$, $G= [G_1,G_2]$. Let $T>0$ be arbitrary. 

A priori, it is unknown whether $u\in C([0,T];L^2(\Tor;\R^2))\cap L^2(0,T;H^1(\Tor;\R^2))$ a.s., so we use a localization argument to prove the necessary estimates. 
Let $(\sigma_n)$ be a localizing sequence for $(u,\sigma)$. We define for $n\in\N$: 
\[
\tilde{u}^n\ceqq u\one_{[0,\sigma_n']},\quad
\sigma_n'\ceqq \sigma_n \wedge T. 
\]  
Consider an   extension $u^n$ of $\tilde{u}^n$, where $u^n$ is defined as the unique global solution to the following linear problem on $[0,T]$:
\begin{equation}\label{eq:brusselator lin ext}
     \begin{cases} 
    &\dd u^n +A_0(\cdot)u^n  \dd t=[F(\cdot,u)\one_{[0,\sigma_n']}+\breve{F}(u)\one_{[0,\sigma_n']}]\dd t +\sqrt{\eps}[B_0(\cdot)u^n +G(\cdot,u)\one_{[0,\sigma_n']}]\dd W,   \\ 
   &u^n(0)=(u_{0,1},u_{0,2}).
\end{cases} 
\end{equation} 
Here, existence and uniqueness of $u^n$ is guaranteed by the linear result \cite[Th.\ 4.6]{BGV},

By the uniqueness property of the maximal solution $(u,\sigma)$,  
and since $(\sigma_n)$ is a localizing sequence, we have  
\begin{equation}\label{eq:lim u^n}
 \sigma_n'\uparrow\sigma\wedge T \, \text{ a.s.,  }\qquad\quad u\one_{[0,\sigma_n']}=\tilde{u}^n\one_{[0,\sigma_n']}=u^n\one_{[0,\sigma_n']}\, \text{ a.s. for all  }n\in\N. 
\end{equation}

Next, we will prove energy estimates for $u^n$. 
We apply the It\^o formula separately  to each components of \eqref{eq:brusselator lin ext},  and apply the $i$-th part of the linear coercivity estimate \eqref{eq:linearcoerbrusselator} with $u$ replaced by $u\one_{[0,\sigma_n']}=u^n\one_{[0,\sigma_n']}$, to obtain  for $i=1,2$, for all $t\in[0,T]$: 
\begin{align}
  \frac12 \|u^n_i(t)\|_{L^2(\Tor)}^2&= \frac12 \|u_{0,i}\|_{L^2(\Tor)}^2+ \int_0^t\<A_i(s,u(s))\one_{[0,\sigma_n']},u^n_i(s)\?\dd s \notag\\
   &\quad+ \sqrt{\eps}\int_0^t\<u^n_i(s),B_i(s,u(s))\one_{[0,\sigma_n']}\dd W(s)\?\notag\\
   &\qquad+\frac{\eps}{2}\int_0^t\|B_i(s,u(s))\one_{[0,\sigma_n']}\|_{L^2(\Tor;\ell^2(\N;\R^2))}^2\dd s \notag\\
   &\leq \frac12 \|u_{0,i}\|_{L^2(\Tor)}^2- \nu_i \int_0^t\one_{[0,\sigma_n']}\|\nabla u_i \|_{L^2(\Tor;\R^2)}^2\dd s + \int_0^t\one_{[0,\sigma_n']}\<\breve{F}_i(u(s)),u_i(s)\?\dd s \notag\\ 
   &\quad+  \int_0^t\one_{[0,\sigma_n']}\|F_i(s,u(s))\|_{L^2(\Tor)}\|u_i(s)\|_{L^2(\Tor)} \dd s+\sqrt{\eps}M_i^n(t) \notag\\
   &\qquad+\frac{\eps}{2}\int_0^t\one_{[0,\sigma_n']}\|G_i(s,u(s))\|_{L^2(\Tor;\ell^2(\N;\R^2))}^2\dd s\notag\\
   &\qquad+\eps\int_0^t\one_{[0,\sigma_n']}\sum_{n\geq 1}\int_{\Tor}g_{n,i}(s,x,u(s,x))\big((b_{n,i}(s,x)\cdot\nabla)u_i(s,x)\big)\dd x\dd s,\label{eq:Ito comp1}
\end{align} 
Here, $M_i^n$ is a continuous local martingale since $B_i(\cdot,u)\one_{[0,\sigma_n']}\in L^2(0,T;\mathscr{L}_2(\ell^2;L^2(\Tor;\R^2)))$ a.s. For the latter, we use that $u\one_{[0,\sigma_n']}\in L^\infty(0,T;L^2(\Tor;\R^2))\cap L^2(0,T;H^1(\Tor;\R^2))$ a.s.\ (since $\sigma_n$ is a localizing sequence) and we use the estimates for $B$ of \cite[Lem.\ 3.8]{TV24}, recalling  that $B$ satisfies the growth conditions in Assumption \ref{ass:critvarsettinglocal} thanks to Lemma \ref{lem:bruss loc}. 

Now, first we turn to $i=1$, which has a  negative term  
\[
\int_0^t\one_{[0,\sigma_n']}\<\breve{F}_1(u(s)),u_1(s)\?\dd s= 
- \int_0^t\one_{[0,\sigma_n']}\| u_1 u_2\|_{L^2(\Tor)}^2 \dd s.
\]
This negative term will be used to compensate for other terms.  

Noting that $\|F_i(s,y)\|_{L^2(\Tor)}\leq {R}(\|y_i\|_{L^2(\Tor)}+1)$ (recall \eqref{eq:def R}), we have for $i=1,2$: 
\begin{align*}
 \int_0^t \one_{[0,\sigma_n']}\|F_i(s,u(s))\|_{L^2(\Tor)}\|u_i(s)\|_{L^2(\Tor)} \dd s&\leq  \int_0^t \one_{[0,\sigma_n']}{R}(\|u_i\|_{L^2(\Tor)}^2+\|u_i\|_{L^2(\Tor)}) \dd s\\
 &\leq\int_0^t \one_{[0,\sigma_n']}2{R} \|u_i\|_{L^2(\Tor;\R^2)}^2  \dd s+{R}T.
\end{align*}
Next, observe that Young's inequality yields
\begin{align*}
  \int_0^t\one_{[0,\sigma_n']}&\sum_{n\geq 1}\int_{\Tor}g_{n,i}(s,x,u(s,x))\big((b_{n,i}(s,x)\cdot\nabla)u_i(s,x)\big)\dd x\dd s\\
  &\leq \int_0^t\one_{[0,\sigma_n']} \int_{\Tor}(\nu_i-\delta)|\nabla u_i(s,x)|^2+\frac{1}{4(\nu_i-\delta)}\Big(\sum_{n\geq 1}|g_{n,i}(s,x,u(s,x))| |b_{n,i}(s,x)|\Big)^2\dd x\dd s.
\end{align*}
Hence, by Assumption \ref{it:Bgrowth} and since $\eps\leq 1$:
\begin{align*}
&\frac{\eps}{2}\int_0^t\one_{[0,\sigma_n']}\|G_i(s,u(s))\|_{L^2(\Tor;\ell^2(\N;\R^2))}^2\dd s\\
   &\qquad+\eps\int_0^t \one_{[0,\sigma_n']}\sum_{n\geq 1}\int_{\Tor}g_{n,i}(s,x,u(s,x))\big((b_{n,i}(s,x)\cdot\nabla)u_i(s,x)\big)\dd x\dd s\\
&\leq   \int_0^t\one_{[0,\sigma_n']} \int_{\Tor} N_i(u)\dd x\dd s+(\nu_i-\delta)\int_0^t \|\nabla u_i\|_{L^2(\Tor;\R^2)}^2\dd s.  
\end{align*}
Note that   the definition of $N_1$ in \eqref{eq:Ni} gives
\[
\int_0^t \one_{[0,\sigma_n']}\int_{\Tor} N_1(u)\dd x\dd s\leq MT+ \int_0^t \one_{[0,\sigma_n']}\big(M \|u_1\|_{L^2(\Tor)}^2 +(1-\epsilon)\|u_1u_2\|_{L^2(\Tor)}^2\big)\dd s.
\]
For $i=1$, combining the   estimates above with \eqref{eq:Ito comp1}, and recalling that $\one_{[0,\sigma_n']}u_1=\one_{[0,\sigma_n']}u_1^n$, yields for all $t\in[0,T]$: 
\begin{align}\label{eq:u1 est prep}
   \tfrac12 \|u_1^n(t)\|_{L^2(\Tor)}^2 &\leq \tfrac12 \|u_{0,1}\|_{L^2(\Tor)}^2 -\delta\int_0^t\one_{[0,\sigma_n']}\|\nabla u_1\|_{L^2(\Tor;\R^2)}^2\dd s-\epsilon\int_0^t\one_{[0,\sigma_n']}\| u_1 u_2\|_{L^2(\Tor)}^2 \dd s\notag\\
   &\quad+\sqrt{\eps}M_1^n(t)+ \int_0^t \one_{[0,\sigma_n']}(2{R}+M) \|u_1^n\|_{L^2(\Tor)}^2 \dd s+(M+{R})T.
\end{align}   

The stochastic Gr\"onwall inequality \cite[Cor.\ 5.4b)]{geiss24} now gives  
for all $\gamma>0$ and 
$n\in\N$: 
\begin{align*} 
\begin{split}
   \P\Big(\sup_{t\in [0,T]} \|&u_1^n(t)\|_{L^2(\Tor)}^2 +\textstyle{\int_0^T}\one_{[0,\sigma_n']}\|\nabla u_1\|_{L^2(\Tor;\R^2)}^2\dd s+\textstyle{\int_0^T}\one_{[0,\sigma_n']}\|u_1u_2\|_{L^2(\Tor)}^2\dd s>\gamma\Big)\\ 
   &\leq \gamma^{-1}C_{\delta,\epsilon}(\tfrac12\|u_{0,1}\|_{L^2(\Tor)}^2+(M+{R})T)\exp\big[2(2R+M)T\big], 
\end{split}
   \end{align*}
where $C_{\delta,\epsilon}\ceqq  2\vee \delta^{-1}\vee \epsilon^{-1}$.  Combining the estimate above with \eqref{eq:lim u^n}, 
we obtain    
\begin{align}\label{eq:u1 final estimate}
&\P\big(\sup_{t\in [0,\sigma\wedge T)} \|u_1(t)\|_{L^2(\Tor)}^2+\textstyle{\int_0^{\sigma\wedge T}}\|\nabla u_1\|_{L^2(\Tor;\R^2)}^2\dd s+\textstyle{\int_0^T}\one_{[0,\sigma_n']}\|u_1u_2\|_{L^2(\Tor)}^2\dd s>\gamma\big)\notag\\
&=\lim_{n\to\infty}\P\big(\sup_{t\in [0,\sigma_n']} \|u_1^n(t)\|_{L^2(\Tor)}^2+\textstyle{\int_0^T}\one_{[0,\sigma_n']}\|\nabla u_1\|_{L^2(\Tor;\R^2)}^2\dd s+\textstyle{\int_0^T}\one_{[0,\sigma_n']}\|u_1u_2\|_{L^2(\Tor)}^2\dd s>\gamma\big)\notag\\
&\leq \sup_{n\in\N}\P\big(\sup_{t\in [0,T]} \|u_1^n(t)\|_{L^2(\Tor)}^2+\textstyle{\int_0^T}\one_{[0,\sigma_n']}\|\nabla u_1\|_{L^2(\Tor;\R^2)}^2\dd s+\textstyle{\int_0^T}\one_{[0,\sigma_n']}\|u_1u_2\|_{L^2(\Tor)}^2\dd s>\gamma\big)\notag\\
&\leq \gamma^{-1}C_{\delta,\epsilon}(\tfrac12\|u_{0,1}\|_{L^2(\Tor)}^2+(M+{R})T)\exp\big[2(2R+M)T \big]\eqqc\gamma^{-1} K_{1,T}.
\end{align}

We proceed by estimating the terms for $u_2$. For the  $\breve{F}_2$-term, we have by H\"older's inequality,  $H^{1/2}(\Tor)\into L^4(\Tor)$ and interpolation:
\begin{align*}
\one_{[0,\sigma_n']}\<\breve{F}_2(u(s)),u_2(s)\?
&\leq \one_{[0,\sigma_n']}\int_{\Tor}|u_1 u_2 ^3|\dd x\\
&\leq \one_{[0,\sigma_n']}\|u_1u_2\|_{L^2(\Tor)}\|u_2\|_{L^4(\Tor)}^2\\ 
&\leq \one_{[0,\sigma_n']}\Big(C_\kappa\|u_1u_2\|_{L^2(\Tor)}^2\|u_2\|_{L^2(\Tor)}^2+\kappa\|\nabla u_2\|_{L^2(\Tor;\R^2)}^2+\kappa\|u_2\|_{L^2(\Tor)}^2\Big).  
\end{align*}  

Furthermore, the definition of $N_2$ in \eqref{eq:Ni},  gives
\begin{align*}
\int_0^t \one_{[0,\sigma_n']}&\int_{\Tor} N_2(u)\dd x\dd s \\
   &  \leq MT+ \int_0^t \one_{[0,\sigma_n']} M\Big(\|u_2\|_{L^2(\Tor)}^2+\|u_1u_2\|_{L^2(\Tor)}^2+\|u_1u_2^3\|_{L^1(\Tor)}+\|u_1\|_{L^4(\Tor)}^4\Big)\dd s\\
&\leq  MT+ \int_0^t \one_{[0,\sigma_n']}M\Big(\|u_2\|_{L^2(\Tor)}^2+C_\kappa\|u_1u_2\|_{L^2(\Tor)}^2+\kappa\| u_2 \|_{L^4(\Tor)}^2+\|u_1\|_{L^4(\Tor)}^4\Big)\dd s\\
&\leq  MT+ \int_0^t \one_{[0,\sigma_n']}M\Big((1+\kappa)\|u_2\|_{L^2(\Tor)}^2+C_\kappa\|u_1u_2\|_{L^2(\Tor)}^2\\
   &\qquad\qquad\qquad\qquad\qquad\qquad+\kappa\|\nabla u_2 \|_{L^2(\Tor;\R^2)}^2+\|u_1\|_{L^2(\Tor)}^2\| u_1\|_{H^1(\Tor )}^2\Big)\dd s.
\end{align*} 
where we used H\"older's inequality and Young's inequality in the second line,  $H^1(\Tor)\into H^{1/2}(\Tor)\into L^4(\Tor)$ and interpolation in the last line. 
 
Collecting the   estimates above and applying \eqref{eq:Ito comp1} for $i=2$, and recalling that $u_2\one_{[0,\sigma_n']}=u_2^n\one_{[0,\sigma_n']}$, 
we obtain for all $t\in[0,T]$ a.s.: 
\begin{align}\label{eq:u2 est prep0}
  \tfrac12 \|u_2^n(t)\|_{L^2(\Tor)}^2  &\leq \tfrac12 \|u_{0,2}\|_{L^2(\Tor)}^2 -(\nu_2-(\nu_2-\delta)-\kappa-M\kappa)\int_0^t\one_{[0,\sigma_n']}\|\nabla u_2\|_{L^2(\Tor;\R^2)}^2\dd s\notag\\
   &\quad+\sqrt{\eps}M_2^n(t)+ \int_0^t \one_{[0,\sigma_n']}(C_{{R},M,\kappa}+C_{\kappa,M}\|u_1u_2\|_{L^2(\Tor)}^2 ) \|u_2^n\|_{L^2(\Tor )}^2 \dd s\notag\\
   &\quad+M\int_0^t\one_{[0,\sigma_n']}\|u_1\|_{L^2(\Tor)}^2\| u_1\|_{H^1(\Tor )}^2\dd s+ {R}T.
\end{align}
Now fix $\kappa=\delta/(2+2M)$.  Then the stochastic Gr\"onwall  inequality from \cite[Cor. 5.4b)]{geiss24}  gives for all $\gamma,\Gamma,w>0$ and $n\in\N$: 
\begin{align} \label{eq:u2 est prep}
\begin{split}
   \P\Big(\sup_{t\in [0,T]}&\|u_2^n(t)\|_{L^2(\Tor)}^2+\textstyle{\int_0^T}\one_{[0,\sigma_n']}\|\nabla u_2\|_{L^2(\Tor;\R^2)}^2\dd s>\gamma\Big)\\
   &\leq 
   \gamma^{-1}C_{\delta}\e^{\Gamma}\mathbb{E}\Big[w\wedge \big(\tfrac12\|u_{0,2}\|_{L^2(\Tor)}^2+M\textstyle{\int_0^T}\one_{[0,\sigma_n']}\|u_1\|_{L^2(\Tor)}^2\| u_1\|_{H^1(\Tor )}^2\dd s+{R}T\big)\Big]\\
   &\qquad+\P\Big(\tfrac12\|u_{0,2}\|_{L^2(\Tor)}^2+M\textstyle{\int_0^T}\one_{[0,\sigma_n']}\|u_1\|_{L^2(\Tor)}^2\| u_1\|_{H^1(\Tor )}^2\dd s+{R}T>w\Big)\\
   &\quad\qquad+\P\Big(\textstyle{\int_0^T}\one_{[0,\sigma_n']}2\big(C_{{R},M,\kappa}+C_{\kappa,M}\|u_1u_2\|_{L^2(\Tor)}^2 \big) \dd s>\Gamma\Big),
   \end{split}
\end{align} 
where $C_{\delta}\ceqq 2\vee 2\delta^{-1}$. 
Note that \eqref{eq:u1 final estimate} also implies an estimate for the following product term:  
\begin{align*}
&\P\big(\textstyle{\int_0^T}\one_{[0,\sigma_n']}\|u_1\|_{L^2(\Tor)}^2\| u_1\|_{H^1(\Tor )}^2\dd s>\gamma\big)\\
&\leq \P\big(\sup_{t\in[0,\sigma\wedge T)}\|u_1(t)\|_{L^2(\Tor)}^2 >\gamma^{1/2}\big)+\P\big(\textstyle{\int_0^{\sigma\wedge T}}\| u_1\|_{H^1(\Tor)}^2 \dd s>\gamma^{1/2}\big)\\
&\leq \P\big(\sup_{t\in[0,\sigma\wedge T)}\|u_1(t)\|_{L^2(\Tor)}^2 >\gamma^{1/2}\big)+\P\big(T\sup_{t\in[0,\sigma\wedge T)}\|u_1(t)\|_{L^2(\Tor)}^2 +\textstyle{\int_0^{\sigma\wedge T}}\|\nabla u_1\|_{L^2(\Tor;\R^2)}^2\dd s>\gamma^{1/2}\big)\\
&\leq \gamma^{-1/2}(1+(T\vee 1))K_{1,T}.    
\end{align*}  
In particular, applying \eqref{eq:u2 est prep} with $\Gamma=\tfrac14\log(\gamma)$ and $w=\gamma^{1/4}$, and combining with \eqref{eq:u1 final estimate}, we conclude that 
\[
\lim_{\gamma\to\infty}\sup_{n\in\N}\P\big(\sup_{t\in [0,T]} \|u_2^n(t)\|_{L^2(\Tor)}^2 +\textstyle{\int_0^T}\one_{[0,\sigma_n']}\|\nabla u_2\|_{L^2(\Tor;\R^2)}^2\dd s>\gamma\big)=0. 
\]
Then, \eqref{eq:lim u^n} and the limit above imply
\begin{align*}
&\lim_{\gamma\to\infty}\P\big(\sup_{t\in [0,\sigma\wedge T)} \|u_2(t)\|_{L^2(\Tor)}^2 +\textstyle{\int_0^{\sigma\wedge T}}\|\nabla u_2\|_{L^2(\Tor;\R^2)}^2\dd s>\gamma\big)\\
&=\lim_{\gamma\to\infty}\lim_{n\to\infty}\P\big(\sup_{t\in [0,\sigma_n']} \|u_2^n(t)\|_{L^2(\Tor)}^2 +\textstyle{\int_0^T}\one_{[0,\sigma_n']}\|\nabla u_2\|_{L^2(\Tor;\R^2)}^2\dd s>\gamma\big)\\
&\leq \lim_{\gamma\to\infty}\sup_{n\in\N}\P\big(\sup_{t\in [0,T]} \|u_2^n(t)\|_{L^2(\Tor)}^2 +\textstyle{\int_0^T}\one_{[0,\sigma_n']}\|\nabla u_2\|_{L^2(\Tor;\R^2)}^2\dd s>\gamma\big)=0.
\end{align*}  
Finally, combining the latter with \eqref{eq:u1 final estimate}, we conclude that for all  $T>0$:  
  \[
     \sup_{t\in[0,\sigma\wedge T)}\|u(t)\|_{L^2(\Tor;\R^2)}+\textstyle{\int_0^{\sigma\wedge T}}\|u\|_{H^1(\Tor;\R^2)}^2\dd s <\infty \quad {\text{ a.s.}}
  \] 
  Therefore,   Lemma \ref{lem:suff non-blow up} yields   $\sigma=\infty$, i.e.\  global well-posedness holds. 
\end{proof}

Next, we  establish the a priori estimate of Assumption \ref{ass:coer replace}\ref{it:2}. 
For $\psi=(\psi^n)\in L^2(0,T;\ell^2)$, the   skeleton equation for the stochastic Brusselator is given by 
\begin{equation}\label{eq:brusselator skeleton}
     \begin{cases}
   & u_1'-\nabla\cdot(a_1\cdot\nabla u_1) =[ -u_1u_2^2 +\lambda_1u_1+\lambda_2u_2+\lambda_0]  \\
   &\hspace{5cm} +\sum_{n\geq 1}[(b_{n,1}\cdot\nabla)u_1+g_{n,1}(\cdot,u)]  \psi^n(t),   \\
   & u_2'-\nabla\cdot(a_2\cdot\nabla u_2) =  [u_1u_2^2 +\mu_1u_1+\mu_2u_2+\mu_0] \\
   &\hspace{5cm}+\sum_{n\geq 1}[(b_{n,2}\cdot\nabla)u_2+g_{n,2}(\cdot,u)]  \psi^n(t),   \\
   &u(0)=(u_{0,1},u_{0,2}). 
\end{cases}
\end{equation}

\begin{lemma}[Skeleton equation bound]\label{lem:skeletonbrusselator}
Let $u_0=(u_{0,1},u_{0,2})\in  L^2(\Tor;\R^2)$, let $T>0$ and let $\psi\in L^2(0,T;\ell^2)$. Suppose that $u$ is a strong solution to the  skeleton equation \eqref{eq:brusselator skeleton} on $[0,T]$. Then:  
\begin{equation}\label{eq:apriori bruss skel}
   \|u\|_{L^2(0,T;H^1(\Tor;\R^2))}+\|u\|_{C([0,T]; L^2(\Tor;\R^2))}   \leq C_{(u_{0,1},u_{0,2})}({T},\|\psi\|_{L^2(0,{T};U)}),
  \end{equation}
   with $C_{(u_{0,1},u_{0,2})}\col \R_+\times \R_+\to\R_+$ a function that is non-decreasing in both components.   
\end{lemma}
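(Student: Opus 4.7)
My plan is to mirror the two-step energy estimate from Lemma \ref{lem:glob bruss}, working pathwise on the deterministic skeleton so that no It\^o correction or stochastic Gr\"onwall is needed. Existence and uniqueness of a maximal solution $(u_*,T_*)$ to \eqref{eq:brusselator skeleton} is inherited from Theorem \ref{th: local well posedness skeleton} and Proposition \ref{prop: blow up criterion} via Lemma \ref{lem:bruss loc}; it therefore suffices to prove \eqref{eq:apriori bruss skel} on $[0,T_*\wedge T)$, after which the blow-up criterion forces $T_*=\infty$ and yields the claimed bound on all of $[0,T]$.

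For the first component I would apply the chain rule to $\|u_1(t)\|_{L^2}^2$, integrate by parts using the periodic boundary conditions, and invoke Assumption \ref{it:parab} to extract $\nu_1\|\nabla u_1\|_{L^2}^2$ on the left, while the reaction term $-u_1 u_2^2$ tested against $u_1$ produces the favourable dissipation $-\|u_1u_2\|_{L^2}^2$. For the control contribution I use Cauchy--Schwarz in $\ell^2$,
\[
\Big|\textstyle{\sum_n}\, \psi^n \int B_{n,1}(u)u_1\dd x\Big| \le \|\psi\|_{\ell^2}\,\|B_1(u)\|_{L^2(\Tor;\ell^2)}\,\|u_1\|_{L^2},
\]
together with the pointwise bound $\|B_1(u)(x)\|_{\ell^2}^2\lesssim |\nabla u_1(x)|^2+ N_1(u(x))$ that follows from Assumption \ref{it:ab} (boundedness of $b_{n,1}$ in $\ell^2$) and Assumption \ref{it:Bgrowth}. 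Young's inequality, with parameters tuned so that only a fraction $(1-\epsilon_0)(1-\epsilon)<1$ of the $|u_1|^2|u_2|^2$-term hidden in $N_1$ re-emerges on the right, then gives
\[
\tfrac{d}{dt}\|u_1\|^2 + c_1\|\nabla u_1\|^2 + c_2\|u_1u_2\|^2 \le h_1(t)\bigl(1+\|u_1\|^2+\|u_2\|^2\bigr),
\]
with $h_1\in L^1(0,T)$ and $\|h_1\|_{L^1(0,T)}$ bounded in terms of $T$ and $\|\psi\|_{L^2(0,T;\ell^2)}$. An analogous computation for $\|u_2(t)\|_{L^2}^2$, now using the 2D Gagliardo--Nirenberg inequality $\|v\|_{L^4(\Tor)}^4\le C\|v\|_{L^2}^2\|v\|_{H^1}^2$ to treat $\int u_1u_2^3\dd x$ together with the $|u_1|^4$ and $|u_1||u_2|^3$ contributions in $\int N_2(u)\dd x$, yields, after absorbing a small fraction of $\|u_1u_2\|^2$ into the dissipation of the first estimate, an analogous inequality for $u_2$ with a residual quartic term of the form $CV\|u\|_V^2$ (where $V\ceqq \|u_1\|^2+\|u_2\|^2$) on the right.

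Summing the two inequalities one obtains
\[
V'(t)+c\|u(t)\|_V^2 \le \tilde h(t)\bigl(1+V(t)\bigr) + C\,V(t)\|u(t)\|_V^2,
\]
and the super-quadratic term $CV\|u\|_V^2$ is the main obstacle in adapting the SPDE proof, where it was absorbed \emph{in probability} via stochastic Gr\"onwall. My plan is to close the argument deterministically by a bootstrap on $[0,T_*\wedge T)$: on any sub-interval where $V(t)\le c/(2C)$ the last term is absorbed into $\tfrac{c}{2}\|u\|_V^2$, and a standard Gr\"onwall then delivers an explicit exponential bound $V(t)+\int_0^t\|u\|_V^2\,\ddd s \le G\bigl(t,\|u_0\|_H,\|\psi\|_{L^2(0,t;\ell^2)}\bigr)$, non-decreasing in both arguments. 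The crucial structural input that makes this closure work is Assumption \ref{it:Bgrowth}: the factor $1-\epsilon$ in $N_1$ provides exactly the negative margin for $\|u_1u_2\|^2$ to survive on the left in the first step, and the precise monomial structure of $N_2$ (together with Gagliardo--Nirenberg) keeps the constant $C$ small enough for the bootstrap to propagate uniformly on $[0,T]$. Combining the resulting bound with Proposition \ref{prop: blow up criterion} then rules out finite-time blow-up and delivers \eqref{eq:apriori bruss skel}.
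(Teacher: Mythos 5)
Your bootstrap step is where the argument breaks down. After summing the two energy inequalities you arrive at
\[
V'(t)+c\|u(t)\|_V^2 \le \tilde h(t)\bigl(1+V(t)\bigr) + C\,V(t)\|u(t)\|_V^2,
\]
and then propose to absorb the superquadratic term by working on sub-intervals where $V(t)\le c/(2C)$. This is a \emph{smallness} argument: it requires $V(0)=\|u_0\|_H^2\le c/(2C)$ to even start, and even if it does start, the linear Gr\"onwall part forces $V$ to grow exponentially, so there is no mechanism preventing $V$ from exceeding $c/(2C)$ before time $T$. The differential inequality you write down genuinely admits finite-time blow-up for large data, and no continuation argument can circumvent that without extra structure. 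Put differently: the bad term in the $u_2$-estimate is $C_\kappa\|u_1u_2\|_{L^2}^2\|u_2\|_{L^2}^2$; absorbing it into the $\epsilon\|u_1u_2\|_{L^2}^2$-dissipation of the summed inequality requires $\|u_2\|_{L^2}^2\le\epsilon/C_\kappa$, which is exactly the smallness you cannot guarantee.

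The key structural idea you are missing is that the two components should \emph{not} be summed; the system is ``triangular''. The $u_1$-inequality alone is a linear Gr\"onwall (its drift coefficient is just $2R+M+\frac12\|\psi\|_{\ell^2}^2\in L^1(0,T)$), and crucially, closing it produces not only bounds on $\sup_t\|u_1\|_{L^2}^2$ and $\int_0^T\|\nabla u_1\|^2$ but also a \emph{quantitative bound on} $\int_0^T\|u_1u_2\|_{L^2}^2\,\ddd s$, thanks to the $\epsilon$-margin surviving from $N_1$. Only after this a priori control is in hand does one pass to the $u_2$-inequality, in which $\|u_1(t)u_2(t)\|_{L^2}^2$ now appears as a \emph{time-dependent coefficient that is known to lie in $L^1(0,T)$}. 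The $u_2$-inequality then also becomes a linear Gr\"onwall with an $L^1$-coefficient and closes for arbitrary $u_0\in L^2$. This is precisely what the paper does (compare the structure of the estimates in the proof of Lemma~\ref{lem:glob bruss}, which the skeleton proof reuses pathwise): first close $u_1$, export the three $u_1$-bounds, then close $u_2$. Your account of the ingredients (parabolicity from Assumption~\ref{it:parab}, the $1-\epsilon$ margin from Assumption~\ref{it:Bgrowth}, Gagliardo--Nirenberg for $\int u_1u_2^3$) is otherwise correct; it is only the order in which the two Gr\"onwall inequalities are closed that must be sequential rather than simultaneous.
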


 \begin{proof} 
 By the chain rule \cite[(A.2)]{TV24} and Assumption \ref{ass:critvarsettinglocal}\ref{it:coercivelinear}, we have  for $i=1,2$ and $t\in[0,T]$: 
\begin{align*}
  \frac12 \|u_i(t)\|_{L^2(\Tor)}^2&\leq \frac12 \|u_{0,i}\|_{L^2(\Tor)}^2+ \int_0^t\<A_i(s,u(s)),u_i(s)\?\dd s +  \int_0^t\<B_i(s,u(s))\psi(s),u_i(s)\?\dd s\notag\\
  &\leq \frac12 \|u_{0,i}\|_{L^2(\Tor)}^2+ \int_0^t\<A_i(s,u(s)),u_i(s)\?\dd s \\
  &\qquad+  \frac12\int_0^t\|B_i(s,u(s)\|_{L^2(\Tor;\ell^2(\N;\R^2))}^2+\|\psi(s)\|_{\ell^2}^2\|u_i(s)\|_{L^2(\Tor)}^2\dd s \notag\\ 
   &\leq \frac12 \|u_{0,i}\|_{L^2(\Tor)}^2- \nu_i \int_0^t\|\nabla u_i \|_{L^2(\Tor;\R^2)}^2\dd s + \int_0^t\<\breve{F}_i(u(s)),u_i(s)\?\dd s \notag\\ 
   &\qquad+  \int_0^t\|F_i(s,u(s))\|_{L^2(\Tor)}\|u_i(s)\|_{L^2(\Tor)} \dd s +\frac{1}{2}\int_0^t\|G_i(s,u(s))\|_{L^2(\Tor;\ell^2(\N;\R^2))}^2\dd s\notag\\
   &\qquad+ \int_0^t\sum_{n\geq 1}\int_{\Tor}g_{n,i}(s,x,u(s,x))\big((b_{n,i}(s,x)\cdot\nabla)u_i(s,x)\big)\dd x\dd s\notag\\
   &\qquad +\frac12\int_0^t \|\psi(s)\|_{\ell^2}^2\|u_i(s)\|_{L^2(\Tor)}^2\dd s.  
\end{align*}
Now, except for the last term, pointwise estimates were already provided in the proof of Lemma \ref{lem:glob bruss} (apply the estimates with $\sigma_n=\sigma_n'=T$). Reusing these estimates, we readily find
\begin{align*} 
\tfrac12 \|u_1(t)\|_{L^2(\Tor)}^2 &\leq \tfrac12 \|u_{0,1}\|_{L^2(\Tor)}^2 -\delta\int_0^t\|\nabla u_1\|_{L^2(\Tor;\R^2)}^2\dd s-\epsilon\int_0^t\| u_1 u_2\|_{L^2(\Tor)}^2 \dd s\\
   &\quad+ \int_0^t (2{R}+M+\tfrac12\|\psi\|_{\ell^2}^2) \|u_1\|_{L^2(\Tor;\R^2)}^2 \dd s+(M+{R})T, 
\end{align*}
and (again fixing $\kappa=\delta/(2+2M)$) 
 \begin{align*}
  \tfrac12 \|u_2(t)\|_{L^2(\Tor)}^2  &\leq \tfrac12 \|u_{0,2}\|_{L^2(\Tor)}^2 -\tfrac{\delta}{2}\int_0^t\|\nabla u_2\|_{L^2(\Tor;\R^2)}^2\dd s \\
   &\quad+ \int_0^t (C_{{R},M,\kappa}+C_{\kappa,M}\|u_1u_2\|_{L^2(\Tor)}^2 +\tfrac12\|\psi\|_{\ell^2}^2) \|u_2\|_{L^2(\Tor )}^2 \dd s\\
   &\quad+M\int_0^t\|u_1\|_{L^2(\Tor)}^2\| u_1\|_{H^1(\Tor )}^2\dd s+ {R}T.
\end{align*} 
Therefore, Gr\"onwall's inequality first gives  
\begin{align*} 
\sup_{t\in[0,T]}&\|u_1(t)\|_{L^2(\Tor)}^2  +\int_0^T \|\nabla u_1\|_{L^2(\Tor;\R^2)}^2+\int_0^T\| u_1 u_2\|_{L^2(\Tor)}^2 \dd s \\
&\leq C_{\delta,\epsilon}\big(\tfrac12\|u_{0,1}\|_{L^2(\Tor)}^2+(M+{R})T\big)\exp\big[2(2{R}+M)T+ \|\psi\|_{L^2(0,T;\ell^2)}^2\big] \\
&\eqqc C^{1}(T,\|\psi\|_{L^2(0,T;\ell^2)}^2)
\end{align*} 
with $C_{\delta,\epsilon}\ceqq 2\vee \delta^{-1}\vee \epsilon^{-1}$. Now, using \emph{both} of the last two estimates, we obtain also   
\begin{align*} 
\|u_2(t)\|_{L^2(\Tor)}^2& +\int_0^t\|\nabla u_2\|_{L^2(\Tor;\R^2)}^2 \\
&\leq C_{\delta}\big(\tfrac12\|u_{0,2}\|_{L^2(\Tor)}^2+ MC^{1}(T,\|\psi\|_{L^2(0,T;\ell^2)}^2)^2(1+T)+{R}T\big)\\
&\qquad\qquad\qquad\cdot\exp\big[2C_{{R},M,\kappa}T+2C_{\kappa,M}C^{1}(T,\|\psi\|_{L^2(0,T;\ell^2)}^2)+ \|\psi\|_{L^2(0,T;\ell^2)}^2\big]\\
&\eqqc C^{2}(T,\|\psi\|_{L^2(0,T;\ell^2)}^2)  
\end{align*}
with $C_{\delta}\ceqq 2\vee 2\delta^{-1}$. 

Thus,  $C_{(u_{0,1},u_{0,2})}(y,z)$ can be chosen of the form $C(C^1(y,z)+C^2(y,z))^{1/2}$, which is non-decreasing in both $y$ and $z$ (see the formulas above), and such that \eqref{eq:apriori bruss skel} is satisfied.  
 \end{proof}

Finally, we need to establish the boundedness in probability of Assumption \ref{ass:coer replace}\ref{it:3}. 
For  $\bphieps$   a predictable stochastic process taking a.s.\ values in $L^2(0,T;\ell^2)$, the stochastic control problem \eqref{eq:SPDE tilted} is in this case: 
 \begin{equation}\label{eq:brusselator tilt}
     \begin{cases}
    &u_1'-\nabla\cdot(a_1\cdot\nabla u_1)  =[ -u_1u_2^2 +\lambda_1u_1+\lambda_2u_2+\lambda_0]  +\sum_{n\geq 1}[(b_{n,1}\cdot\nabla)u_1+g_{n,1}(\cdot,u)]  \psi^n(t)   \\
   &\hspace{5cm}+\sqrt{\eps}\sum_{n\geq 1}[(b_{n,1}\cdot\nabla)u_1+g_{n,1}(\cdot,u)]\dd w^n(t),\\
    & u_2'-\nabla\cdot(a_2\cdot\nabla u_2)  =  [u_1u_2^2 +\mu_1u_1+\mu_2u_2+\mu_0]  +\sum_{n\geq 1}[(b_{n,2}\cdot\nabla)u_2+g_{n,2}(\cdot,u)]  \psi^n(t)  \\
   & \hspace{5cm}  +\sqrt{\eps}\sum_{n\geq 1}[(b_{n,2}\cdot\nabla)u_2+g_{n,2}(\cdot,u)]\dd w^n(t),\\
   &u(0)=(u_{0,1},u_{0,2}).
\end{cases}
\end{equation}

\begin{lemma}[Stochastic control problem bound]\label{lem:tiltbrusselator}
Let $u_0=(u_{0,1},u_{0,2})\in  L^2(\Tor;\R^2)$ and let $T>0$. 
Suppose that $(\bphieps)_{\eps>0}$ is a collection of predictable stochastic processes and suppose that $K\geq 0$ is such that for each $\eps>0$, 
\begin{equation}\label{eq:less K} 
    \|\bphieps\|_{L^2(0,T;\ell^2)}\leq K \quad   a.s.  
\end{equation}
Let $u^\eps$ be the strong solution to \eqref{eq:brusselator tilt} on $[0,T]$ corresponding to $\bphieps$. Then it holds that 
\begin{equation}\label{eq:lim tilt bruss} 
\lim_{\gamma\to\infty}\sup_{\eps\in(0,1/2)}\P(\|u^\eps\|_{L^2(0,T;H^1(\Tor;\R^2))}+\|u^\eps\|_{C([0,T]; L^2(\Tor;\R^2))} >\gamma)=0,.
\end{equation}
\end{lemma}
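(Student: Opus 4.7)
The plan is to combine the two arguments already used in this subsection: the stochastic energy estimate for the SPDE from the proof of Lemma \ref{lem:glob bruss}, and the deterministic bound for the skeleton equation from the proof of Lemma \ref{lem:skeletonbrusselator}. Since $u^\eps$ satisfies \eqref{eq:brusselator tilt}, which has both the stochastic term of \eqref{eq:brusselator} and the control drift $B\psi^\eps$ of \eqref{eq:brusselator skeleton}, the proof should essentially add the control-drift handling from Lemma \ref{lem:skeletonbrusselator} on top of the stochastic argument from Lemma \ref{lem:glob bruss}.

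I would start by localizing $u^\eps$ along a localizing sequence $(\sigma_n^\eps)$ and introducing the linear extensions $u^{\eps,n}$ analogous to those in \eqref{eq:brusselator lin ext}, but now with the extra drift term $B(\cdot,u^\eps)\bphieps\one_{[0,\sigma_n'^\eps]}$. Applying the It\^o formula to $\tfrac12\|u_i^{\eps,n}(t)\|_{L^2(\Tor)}^2$ componentwise and using the linear coercivity \eqref{eq:linearcoerbrusselator}, one obtains an expansion identical to \eqref{eq:Ito comp1} plus the extra term
\[
\int_0^t\one_{[0,\sigma_n'^\eps]}\langle B_i(s,u^\eps(s))\bphieps(s),u_i^\eps(s)\rangle\,\mathrm{d}s,
\]
which, as in the proof of Lemma \ref{lem:skeletonbrusselator}, can be bounded by
\[
\tfrac12\int_0^t\one_{[0,\sigma_n'^\eps]}\bigl(\nn B_i(s,u^\eps(s))\nn_H^2+\|\bphieps(s)\|_{\ell^2}^2\|u_i^\eps(s)\|_{L^2(\Tor)}^2\bigr)\,\mathrm{d}s.
\]
The first piece merges with the existing $G$-term and is absorbed via Assumption \ref{it:Bgrowth} (now with constant $\tfrac12+\tfrac\eps2\leq 1$ in front, using $\eps\leq 1/2$); the second piece contributes a $\|\bphieps\|_{\ell^2}^2\|u_i^\eps\|_{L^2}^2$ term to the Gr\"onwall weight.

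Next, I would reproduce for $i=1$ the estimate \eqref{eq:u1 est prep}, now with the Gr\"onwall weight augmented by $\tfrac12\|\bphieps\|_{\ell^2}^2$. Since by assumption $\|\bphieps\|_{L^2(0,T;\ell^2)}^2\leq K^2$ a.s., this additional weight has an a.s.\ $L^1(0,T)$-bound that is deterministic and independent of $\eps$. Applying the stochastic Gr\"onwall inequality \cite[Cor.\ 5.4b)]{geiss24} then yields, exactly as in Lemma \ref{lem:glob bruss},
\[
\sup_{\eps\in(0,1/2)}\P\bigl(\sup_{t\in[0,T]}\|u_1^\eps(t)\|_{L^2(\Tor)}^2+\textstyle\int_0^T\|\nabla u_1^\eps\|_{L^2(\Tor;\R^2)}^2\,\mathrm{d}s+\int_0^T\|u_1^\eps u_2^\eps\|_{L^2(\Tor)}^2\,\mathrm{d}s>\gamma\bigr)\leq \gamma^{-1}\tilde K_{1,T,K},
\]
with a constant depending on $T$ and $K$ but not $\eps$, after passing to the limit $n\to\infty$ as in \eqref{eq:u1 final estimate}. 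The second component is then treated analogously: using $H^{1/2}(\Tor)\hookrightarrow L^4(\Tor)$ and interpolation to absorb $\|u_1 u_2^3\|_{L^1}$ and $\|u_1\|_{L^4}^4$, one arrives at \eqref{eq:u2 est prep0} (again with an extra $\tfrac12\|\bphieps\|_{\ell^2}^2\|u_2^\eps\|_{L^2}^2$ inside the Gr\"onwall weight), applies \cite[Cor.\ 5.4b)]{geiss24} as in \eqref{eq:u2 est prep} with $\Gamma=\tfrac14\log\gamma$, $w=\gamma^{1/4}$, and finally uses the first-component control on $\|u_1^\eps\|_{L^2}^2\|u_1^\eps\|_{H^1}^2$ exactly as done at the end of the proof of Lemma \ref{lem:glob bruss}.

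The main (modest) obstacle is bookkeeping: verifying that the stochastic Gr\"onwall constants remain uniform in $\eps\in(0,1/2)$ and the localization $n$, which relies on the fact that $\eps\leq 1/2$ shrinks the noise contribution so that Assumption \ref{it:Bgrowth} still absorbs the $\tfrac{\eps}{2}\|G\|^2$ and $\eps\int gb\cdot\nabla u$ terms with the same constants $\nu_i-\delta$ as in Lemma \ref{lem:glob bruss}, and on the fact that $\|\bphieps\|_{L^2(0,T;\ell^2)}\leq K$ is an \emph{almost-sure} deterministic bound, hence enters the Gr\"onwall exponential as the deterministic factor $\exp(K^2)$ rather than contributing further probabilistic tails. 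Once this is checked, combining the component estimates yields \eqref{eq:lim tilt bruss}.
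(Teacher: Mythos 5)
The overall strategy is right — run the energy estimate of Lemma \ref{lem:glob bruss} with the extra control drift, noting that it only adds a Gr\"onwall weight whose $L^1(0,T)$-norm is a.s.\ bounded by $K^2$ — but there is a quantitative slip at the Young inequality step that breaks the constants. You bound the control-drift term by $\tfrac12\int\|B_i(u^\eps)\|^2+\tfrac12\int\|\bphieps\|_{\ell^2}^2\|u_i^\eps\|_{L^2}^2$, importing the even split from Lemma \ref{lem:skeletonbrusselator}, and then add the It\^o quadratic variation $\tfrac{\eps}{2}\int\|B_i(u^\eps)\|^2$, so the total weight in front of $\|B_i(u^\eps)\|^2$ becomes $\tfrac12+\tfrac{\eps}{2}\in(\tfrac12,\tfrac34]$. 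This exceeds $\tfrac12$, which is the sharp threshold the structural assumptions are calibrated to: expanding $\|B_i\|^2$ produces $\|B_{0,i}u_i\|^2$ with a coefficient strictly larger than $\tfrac12$, and the parabolicity of Assumption \ref{it:parab} only gives $\langle A_{0,i}u_i,u_i\rangle-\tfrac12\|B_{0,i}u_i\|^2\geq\nu_i\|\nabla u_i\|^2$; the uncontrolled excess $\tfrac{\eps}{2}\|B_{0,i}u_i^\eps\|^2$ cannot in general be absorbed by the dissipation for bounded $b$. Likewise Assumption \ref{it:Bgrowth} is normalized against exactly $\tfrac12\|g_i\|_{\ell^2}^2$ and $\tfrac{1}{4(\nu_i-\delta)}\big(\sum_n|b_{n,i}||g_{n,i}|\big)^2$, not against larger multiples. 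The paper avoids this by splitting unevenly: $\langle B_i\bphieps,u_i^\eps\rangle\leq\tfrac14\|B_i(u^\eps)\|^2+\|\bphieps\|_{\ell^2}^2\|u_i^\eps\|_{L^2}^2$ and $\tfrac{\eps}{2}\|B_i(u^\eps)\|^2\leq\tfrac14\|B_i(u^\eps)\|^2$ using $\eps<\tfrac12$, so the total is exactly $\tfrac12\|B_i(u^\eps)\|^2$, at which point every pointwise bound from Lemma \ref{lem:glob bruss} applies verbatim with $\sigma_n'=T$. This is precisely why the range $\eps\in(0,1/2)$ appears in the statement; once you replace your $\tfrac12$/$\tfrac12$ split by $\tfrac14$/$1$, the argument goes through.

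A smaller remark: the localization along $(\sigma_n^\eps)$ and the linear extensions are unnecessary here. In contrast to Lemma \ref{lem:glob bruss}, where only a maximal solution was available, here $u^\eps$ is \emph{assumed} to be a strong solution on $[0,T]$, hence $u^\eps\in\MR(0,T)$ a.s., and the It\^o formula together with the stochastic Gr\"onwall inequality of \cite[Cor.\ 5.4b)]{geiss24} can be applied directly, with the stochastic integral entering only as a continuous local martingale. The remainder of your plan — Gr\"onwall with the $\|\bphieps\|_{\ell^2}^2$ weight giving a deterministic $\exp(2K^2)$ factor, the two-stage estimate first on $u_1^\eps$ (producing $\int\|u_1^\eps u_2^\eps\|_{L^2}^2$ control) and then on $u_2^\eps$ with $\Gamma=\tfrac14\log\gamma$, $w=\gamma^{1/4}$ — matches the paper.
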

 
 \begin{proof}
   By the It\^o formula \cite[(A.4)]{TV24} and Assumption \ref{ass:critvarsettinglocal}\ref{it:coercivelinear}, we have  for $i=1,2$ and for all $\eps\in(0,1/2)$ and $t\in[0,T]$: 
\begin{align*}
  \frac12 \|u_i^\eps(t)\|_{L^2(\Tor)}^2&\leq \frac12 \|u_{0,i} \|_{L^2(\Tor)}^2+ \int_0^t\<A_i(s,u^\eps(s)),u_i^\eps(s)\?\dd s +  \int_0^t\<B_i(s,u^\eps(s))\bphieps(s),u_i^\eps(s)\?\dd s\notag\\
  &\qquad+ \sqrt{\eps}\int_0^t\<u_i^\eps(s),B_i(s,u^\eps(s))\dd W(s)\? +\frac{\eps}{2}\int_0^t\|B_i(s,u^\eps(s)\|_{L^2(\Tor;\ell^2(\N;\R^2))}^2\dd s \notag\\
  &\leq \frac12 \|u_{0,i}\|_{L^2(\Tor)}^2+ \int_0^t\<A_i(s,u^\eps(s)),u_i^\eps(s)\?\dd s \\
  &\qquad+ \int_0^t \frac14\|B_i(s,u^\eps(s)\|_{L^2(\Tor;\ell^2(\N;\R^2))}^2+\|\bphieps(s)\|_{\ell^2}^2\|u_i^\eps(s)\|_{L^2(\Tor)}^2\dd s \notag\\ 
  &\qquad+  \int_0^t\<u_i^\eps(s),B_i(s,u^\eps(s))\dd W(s)\? +\frac{1}{4}\int_0^t\|B_i(s,u^\eps(s)\|_{L^2(\Tor;\ell^2(\N;\R^2))}^2\dd s \notag\\
   &\leq \frac12 \|u_{0,i} \|_{L^2(\Tor)}^2- \nu_i \int_0^t\|\nabla u_i^\eps \|_{L^2(\Tor;\R^2)}^2\dd s + \int_0^t\<\breve{F}_i(u^\eps(s)),u_i^\eps(s)\?\dd s \notag\\ 
   &\qquad+  \int_0^t\|F_i(s,u^\eps(s))\|_{L^2(\Tor)}\|u_i^\eps(s)\|_{L^2(\Tor)} \dd s +\frac{1}{2}\int_0^t\|G_i(s,u^\eps(s))\|_{L^2(\Tor;\ell^2(\N;\R^2))}^2\dd s\notag\\
   &\qquad+ \int_0^t\sum_{n\geq 1}\int_{\Tor}g_{n,i}(s,x,u^\eps(s,x))\big((b_{n,i}(s,x)\cdot\nabla)u_i^\eps(s,x)\big)\dd x\dd s\notag\\
   &\qquad + \int_0^t \|\bphieps(s)\|_{\ell^2}^2\|u_i^\eps(s)\|_{L^2(\Tor)}^2\dd s +  M_i^\eps(t),  
\end{align*} 
where $M_i^\eps$ is a continuous local martingale.  
Now we can again use the pointwise estimates from the proof of Lemma \ref{lem:glob bruss} (with $\sigma_n=\sigma_n'=T$), noting that only the  second-last term is new. This readily gives (see \eqref{eq:u1 est prep})
\begin{align*}
   \tfrac12 \|u_1^\eps(t)\|_{L^2(\Tor)}^2 &\leq \tfrac12 \|u_{0,1}\|_{L^2(\Tor)}^2 -\delta\int_0^t\|\nabla u_1^\eps\|_{L^2(\Tor;\R^2)}^2\dd s-\epsilon\int_0^t\| u_1^\eps u_2^\eps\|_{L^2(\Tor)}^2 \dd s\\
   &\quad+ M_1^\eps(t)+ \int_0^t (2{R}+M+\|\bphieps\|_{\ell^2}^2) \|u_1^\eps\|_{L^2(\Tor;\R^2)}^2 \dd s+(M+{R})T.
\end{align*}  
The stochastic Gr\"onwall inequality \cite[Cor.\ 5.4b)]{geiss24} implies for all $\gamma>0$:    
\begin{align}\label{eq:u1 est tilt}
\begin{split}
   \P\Big(\sup_{t\in [0,T]}&\|u_1^\eps(t)\|_{L^2(\Tor)}^2 +\textstyle{\int_0^T} \|\nabla u_1^\eps\|_{L^2(\Tor;\R^2)}^2\dd s+\textstyle{\int_0^T} \|u_1^\eps u_2^\eps\|_{L^2(\Tor)}^2\dd s>\gamma\Big)\\ 
   &\leq \gamma^{-1}C_{\delta,\epsilon}\big(\tfrac12\|u_{0,1}\|_{L^2(\Tor)}^2+(M+{R})T\big)\exp[2((2{R}+M)T+K^2)],  
\end{split}
   \end{align} 
where $C_{\delta,\epsilon}\ceqq 2\vee \delta^{-1}\vee \epsilon^{-1}$. 
 
Moreover, again using the pointwise estimates from the proof of Lemma \ref{lem:glob bruss} and fixing $\kappa=\delta/(2+2M)$ therein,  we find that  (see \eqref{eq:u2 est prep0})
\begin{align*}
  \tfrac12 \|u_2^\eps(t)\|_{L^2(\Tor)}^2  &\leq \tfrac12 \|u_{0,2}\|_{L^2(\Tor)}^2 -\tfrac{\delta}{2}\int_0^t\|\nabla u_2^\eps\|_{L^2(\Tor;\R^2)}^2\dd s+ M_2^\eps(t)\\
   &\qquad+ \int_0^t (C_{{R},M,\kappa}+C_{\kappa,M}\|u_1^\eps u_2^\eps\|_{L^2(\Tor)}^2 +\|\bphieps\|_{\ell^2}^2) \|u_2^\eps\|_{L^2(\Tor )}^2 \dd s\\
   &\qquad+M\int_0^t\|u_1^\eps\|_{L^2(\Tor)}^2\|u_1^\eps\|_{H^1(\Tor)}^2\dd s+ {R}T.
\end{align*}
Hence, by stochastic Gr\"onwall's inequality \cite[Cor. 5.4b)]{geiss24} and \eqref{eq:less K}, we have for all $\gamma,\Gamma,w>0$:
\begin{align}\label{eq:u2 est tilt}
\begin{split}
   \P\Big(\sup_{t\in [0,T]}&\|u_2^\eps\|_{L^2(\Tor)}^2 +\textstyle{\int_0^{T}}\|\nabla u_2^\eps\|_{L^2(\Tor;\R^2)}^2\dd s>\gamma\Big)\\
   &\leq  \gamma^{-1}C_{\delta}\e^{\Gamma}\mathbb{E}\Big[w\wedge \big( \tfrac12\|u_{0,2}\|_{L^2(\Tor)}^2+M\textstyle{\int_0^{T}}\|u_1^\eps\|_{L^2(\Tor)}^2\|u_1^\eps\|_{H^1(\Tor )}^2\dd s+{R}T \big)\Big]\\
   &\qquad+\P\Big(\tfrac12\|u_{0,2}\|_{L^2(\Tor)}^2+M\textstyle{\int_0^{T}}\|u_1^\eps\|_{L^2(\Tor)}^2\|u_1^\eps\|_{H^1(\Tor )}^2\dd s+{R}T >w\Big)\\
   &\quad\qquad+\P\Big(2\big(C_{{R},M,\kappa}T+\textstyle{\int_0^{T}}C_{\kappa,M}\|u_1^\eps u_2^\eps\|_{L^2(\Tor)}^2  \dd s+K^2\big)>\Gamma\Big), 
   \end{split}
\end{align}
where $C_\delta\ceqq 2\vee2\delta^{-1}$. 

Note that the final estimates in \eqref{eq:u1 est tilt} and \eqref{eq:u2 est tilt} are independent of $\eps\in(0,1/2)$. Thus, combining these two estimates, taking the supremum over $\eps\in(0,1/2)$, and using the arguments from the end of the proof of Lemma \ref{lem:glob bruss}  and choosing $\Gamma=\tfrac14\log(\gamma)$ and $w=\gamma^{\frac14}$, we conclude that \eqref{eq:lim tilt bruss} holds.
 \end{proof}
 
\begin{proof}[Proof of Theorem \ref{th:brusselator}]
Assumption \ref{ass:critvarsettinglocal} is satisfied due to Lemma \ref{lem:bruss loc},  Assumption \ref{ass:coer replace}\ref{it:1} holds by Lemma \ref{lem:glob bruss}, Assumption \ref{ass:coer replace}\ref{it:2} holds by Lemma  \ref{lem:skeletonbrusselator} and Assumption \ref{ass:coer replace}\ref{it:3} holds by Lemma \ref{lem:tiltbrusselator}, putting $\eps_0=1/2$. Thus we can apply Theorem \ref{th:LDP general}. 
\end{proof}

\subsection{Coercive applications}\label{ss:appl coer}

\subsubsection{\textbf{2D Allen--Cahn equation with transport and quadratic noise}}\label{ss:allen}
First, let us apply Corollary \ref{cor:LDPcoercive} to an example from the extended critical variational setting \cite[Ex.\ 8.4]{BGV}: the stochastic 2D Allen--Cahn equation in the analytically weak setting. 
To the best of our knowledge, the LDP was not available for this example in 2D until now. Here, we  also include transport noise. 

Consider the following Allen--Cahn equation on a bounded $C^1$-domain $\O\subset \R^2$. 
\begin{equation}\label{eq:allen}
     \begin{cases}
   &\dd {u}-\Delta {u}\dd t=[ {u}-{u}^3]\dd t+\sqrt{\eps}\sum_{n\geq 1}[(b_n\cdot\nabla){u}+g_n({u})]\dd w^n,   \\ 
   &{u}=0 \text{ on }\partial\O,\\
   &{u}(0)=u_0,
\end{cases}
\end{equation}
where $(w^n)$ is a sequence of independent real-valued standard Brownian motions. 
We use the Gelfand triple $(V,H,V^*)=(H^1_0(\O),L^2(\O),H^{-1}(\O))$ and define strong solutions according to Definition \ref{def:sol}. Moreover, we assume only the following.

\begin{assumption}\label{ass:allen}

\noindent
\begin{itemize}
\item $(b_n)\in \ell^2$ and $1-\frac12\|(b_n)_n\|_{\ell^2}^2>0$ 
  \item There exist $C_0\geq 0$ and $C_1\in[0,2]$ such that for all $y,z\in \O$: 
  
  \noindent
  $\|g(y)-g(z)\|_{\ell^2} \leq C_0(1+|y|+|z|)|y-z|$ and $\|g(y)\|_{\ell^2}^2\leq C_0+C_1|y|^4$ 
  \end{itemize}
\end{assumption}
Due to \cite[Ex.\ 8.3, Ex.\ 8.4]{BGV}, and noting that the coercivity condition \eqref{eq:def coercive}  (corresponding to $\eta=0$ in \cite{BGV}) allows to include also the endpoint case $C_1=2$, we obtain the following result directly from Corollary \ref{cor:LDPcoercive}. We note that this application relies on the  flexible $\alpha$-(sub)criticality condition \eqref{eq: subcrit cond alpha} for $\hat{F}$, which was not yet included in \cite{TV24} for non-zero $\alpha_i$. 

\begin{theorem}\label{th:allen}
Let Assumption \ref{ass:allen} hold and let $u_0\in L^2(\O)$. 
For all $\eps\in[0,1]$, there exists a unique strong solution $u^\eps$ to \eqref{eq:allen}, and the family $(u^\eps)_{\eps\in(0,\frac12)}$ satisfies the LDP on $\MR(0,T)\ceqq C([0,T];L^2(\O))\cap L^2(0,T;H^1_0(\O))$ with rate function $I\col \MR(0,T)\to[0,\infty]$ given by 
\[
I(z)=\inf\{\|\psi\|_{L^2(0,T;\ell^2)}^2:\psi \in L^2(0,T;\ell^2), z=u^{\psi} \},
\]
where $\inf\varnothing\ceqq+\infty$ and for $\psi=(\psi^n)\in L^2(0,T;\ell^2)$,  $u^\psi\in  \MR(0,T)$ denotes the unique strong solution to
\begin{equation}\label{eq:allen}
     \begin{cases}
   &(u^{\psi})'-\Delta u^{\psi} =[ u^{\psi}-(u^{\psi})^3] +\sum_{n\geq 1}[(b_n\cdot\nabla)u^{\psi}+g_n(u^{\psi})] \psi^n,   \\ 
   &u^{\psi}=0 \text{ on }\partial\O,\\
   &u^{\psi}(0)=u_0. 
\end{cases}
\end{equation}
\end{theorem}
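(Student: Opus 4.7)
The plan is to reduce everything to Corollary \ref{cor:LDPcoercive}, so the work splits into two verifications: Assumption \ref{ass:critvarsettinglocal} (the extended critical variational setting) and the full coercivity \eqref{eq:def coercive} of the pair $(A,B)$. I would first cast \eqref{eq:allen} in the abstract form \eqref{eq:defAB} with Gelfand triple $(V,H,V^*)=(H^1_0(\O),L^2(\O),H^{-1}(\O))$, taking $A_0(t)v\ceqq -\Delta v$, $B_0(t)v\ceqq ((b_n\cdot\nabla)v)_{n\ge 1}$, $G(t,v)\ceqq (g_n(v))_{n\ge 1}$, $g\equiv 0$, $\hat{f}\equiv 0$, and splitting the drift nonlinearity as $\hat{F}=\hat{F}_1+\hat{F}_2$ with $\hat{F}_1(v)=v$ and $\hat{F}_2(v)=-v^3$. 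The point of separating the cubic term is to use non-zero $\alpha_2$, which is the novelty enabled by \eqref{eq: subcrit cond alpha}.

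For Assumption \ref{ass:critvarsettinglocal}, the measurability, linear-coercivity \ref{ass:critvarsettinglocal}\ref{it:coercivelinear} (which only uses $\nabla$-terms: $\langle -\Delta v,v\rangle-\tfrac12\|B_0 v\|_{\mathscr{L}_2}^2\ge(1-\tfrac12\|(b_n)\|_{\ell^2}^2)\|\nabla v\|_{L^2}^2$), and the linear growth/Lipschitz bounds for $A_0,B_0$ are immediate. For $G$ and $\hat{F}_1$, the bounds are verified exactly as in Lemma \ref{lem:bruss loc}: $G$ is handled with $\beta_1=\tfrac34$, $\rho_1=1$ via $H^{1/2}(\O)\into L^4(\O)$ (valid in dimension $2$), and $\hat{F}_1$ trivially with $\alpha_1=0$. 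For $\hat{F}_2(v)=-v^3$ I would choose $\alpha_2=\tfrac12$ (so $V_{\alpha_2}=L^2(\O)$), $\hat\rho_2=2$, $\hat\beta_2=\tfrac56$ (so $V_{\hat\beta_2}=H^{2/3}(\O)\into L^6(\O)$), giving $\|v^3\|_{L^2}\lesssim \|v\|_{L^6}^3\lesssim \|v\|_{\hat\beta_2}^3$ and, by the mean-value identity for $y\mapsto y^3$, the matching Lipschitz estimate; condition \eqref{eq: subcrit cond alpha} reads $3\cdot\tfrac23=2\le 1+2\alpha_2=2$, so the critical case is admissible. This is essentially \cite[Ex.\ 8.3, Ex.\ 8.4]{BGV}, which I would cite.

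For the full coercivity \eqref{eq:def coercive}, the direct computation is:
\begin{align*}
\langle A(t,v),v\rangle-\tfrac12\|B(t,v)\|_{\mathscr{L}_2}^2
&= \|\nabla v\|_{L^2}^2-\|v\|_{L^2}^2+\|v\|_{L^4}^4\\
&\quad -\tfrac12\textstyle\sum_{n}\|(b_n\cdot\nabla)v\|_{L^2}^2-\tfrac12\textstyle\sum_{n}\|g_n(v)\|_{L^2}^2\\
&\ge \bigl(1-\tfrac12\|(b_n)\|_{\ell^2}^2\bigr)\|\nabla v\|_{L^2}^2-\|v\|_{L^2}^2+\bigl(1-\tfrac{C_1}{2}\bigr)\|v\|_{L^4}^4-\tfrac{C_0}{2}|\O|,
\end{align*}
using only Assumption \ref{ass:allen}. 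Since $C_1\le 2$ the $L^4$-term is non-negative and can be dropped, and the Poincaré norm equivalence on $H^1_0(\O)$ gives \eqref{eq:def coercive} with $\theta=1-\tfrac12\|(b_n)\|_{\ell^2}^2>0$, an $M>0$, and a constant $\phi$.

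With both verifications in hand, Corollary \ref{cor:LDPcoercive} applies directly and yields existence and uniqueness of $Y^\eps$ for $\eps\in[0,1]$ together with the LDP on $\MR(0,T)$ with the stated rate function, where $u^\psi$ solves the skeleton equation \eqref{eq:skeleton} which in the present setting is exactly the Allen--Cahn skeleton equation written out in the statement. The only mildly delicate step is the critical treatment of $-v^3$ via $\alpha_2=\tfrac12$; everything else is either a routine Sobolev/H\"older check or a direct reading of the coercivity inequality.
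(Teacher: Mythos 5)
Your reduction to Corollary \ref{cor:LDPcoercive} -- verifying Assumption \ref{ass:critvarsettinglocal} and the coercivity \eqref{eq:def coercive}, then invoking the corollary -- is exactly the paper's strategy; the paper outsources both checks to \cite[Ex.~8.3, 8.4]{BGV}, whereas you work out the parameter choices explicitly, which is a legitimate and more self-contained alternative, and the choices you make (in particular $\alpha_2=\tfrac12$, $\hat\beta_2=\tfrac56$, $\hat\rho_2=2$ for the cubic, giving equality in \eqref{eq: subcrit cond alpha}) are correct.

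There is, however, one real gap in your coercivity computation. Your first line treats $\|B(t,v)\|^2_{\mathscr{L}_2}$ as $\sum_n\|(b_n\cdot\nabla)v\|_{L^2}^2+\sum_n\|g_n(v)\|_{L^2}^2$, silently discarding the cross term $2\sum_n\int_\O (b_n\cdot\nabla)v\,g_n(v)\dd x$. This term does vanish: setting $h_n(y)\ceqq\int_0^y g_n(s)\dd s$ gives $g_n(v)\nabla v=\nabla(h_n\circ v)$, and integrating by parts annihilates $\int_\O b_n\cdot\nabla(h_n\circ v)\dd x$ because $b_n$ is a constant (hence divergence-free) vector, $g_n$ has no $x$-dependence, and $h_n(v)=h_n(0)=0$ on $\partial\O$. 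But this cancellation is precisely what admits the endpoint $C_1=2$ of Assumption \ref{ass:allen}: a generic Young-inequality splitting of the cross term would put a coefficient strictly larger than $\tfrac12$ in front of $\sum_n\|g_n(v)\|_{L^2}^2$ and would therefore force $C_1<2$. Since the paper explicitly emphasizes that $C_1=2$ is included, you should state and justify this cancellation rather than presenting the first equality as obvious -- as written, the step that makes the endpoint case work is unaccounted for.
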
 

\subsubsection{\textbf{2D Navier--Stokes equation with Stratonovich transport noise}}\label{ss:NS}

Here we use Corollary \ref{cor:LDPcoerciveperturbed1} to derive the Stratonovich noise analog of \cite[Th.\ 5.4]{TV24}. The latter contains the LDP for the 2D Navier--Stokes equation with It\^o transport noise on bounded and unbounded domains, which solved a long-standing open problem, see the Introduction in \cite{TV24}. 

Consider the following Navier--Stokes system with no-slip condition and Stratonovich noise on an open set $\O\subset \R^2$, for small $\eps>0$:
\begin{equation}
\label{eq:NS}
\left\{
\begin{aligned}
&\dd {u} =\big[\nu \Delta {u} -({u}\cdot \nabla){u} -\nabla p  \big] \,\dd t +\sqrt{\varepsilon} \textstyle{\sum_{n\geq 1}}\big[(b_{n}\cdot\nabla) {u} -\nabla \widetilde{p}_n \big] \circ\dd w_t^n,
\\
&\div \,{u}=0,
\\& {u}=0 \ \text{on $\partial \O$},
\\& {u}(0,\cdot)=u_0.
\end{aligned}\right.
\end{equation}
Here, $(w^n)$ is a sequence of independent real-valued standard Brownian motions, $\circ$ denotes Stratonovich integration, $u\coloneqq(u^{1},u^{2})\colon[0,\infty)\times \O\to \R^2$ is an unknown velocity field, $p,\widetilde{p}_n\colon[0,\infty)\times \O\to \R$ are unknown pressures and 
\begin{equation*}
(b_{n}\cdot\nabla) u\coloneqq\Big(\textstyle{\sum_{j=1}^2} b_n^j \partial_j u^k\Big)_{k=1,2},
\qquad (u\cdot \nabla ) u\coloneqq\Big(\textstyle{\sum_{j=1}^2} u^j \partial_j u^k\Big)_{k=1,2}.
\end{equation*}

\begin{assumption}\label{ass:NS}
$\nu>0$ and for  $j\in\{1,2\}$, $b^j = (b^j_{n})_{n\geq 1}\col\R_+\times \O\to \ell^2$ is measurable and bounded.  
\end{assumption}

As in \cite[\S 7.3.4]{AV25survey}, we use the Helmholtz projection $\P$  to rewrite \eqref{eq:NS} as an abstract evolution equation of the form \eqref{eq:SPDE perturbed}, putting $U = \ell^2$,  
\[
V \ceqq \Hs^1_0(\O) \ceqq H^1_0(\O;\R^2)\cap \Ls^2(\O),\quad H \ceqq \Ls^2(\O), \quad V^* \ceqq \Hs^{-1}(\O) = (\Hs^1_0(\O))^*,
\]
where $\Ls^2(\O)$ denotes the range of the Helmholtz projection in $L^2(\O;\R^2)$, 
and we use 
\begin{alignat*}{2}
A_0 &\ceqq -\nu \pr\Delta, &\quad   
B_0 u &\ceqq (\pr[(b_{n}\cdot\nabla) u])_{n\geq 1}, \\
\hat{F}(u) &\ceqq- \pr\div[u\otimes u],   &\quad
&G=\hat{f}=g=0,\\
\eta(\eps)&\ceqq \eps,  &\quad
\tilde{A}u&\ceqq 
\P\big[\div(a_b\cdot\nabla u)-\tfrac12\textstyle{\sum_{n\geq 1}}\div\big(b_n\otimes(I-\pr)[(b_n\cdot\nabla)u]\big)\big],
\end{alignat*} 
where  $a_b\ceqq (a_b^{i,j})_{i,j=1}^2\ceqq (\frac12\sum_{n\geq 1}b_n^jb_n^i)_{i,j=1}^2$ and we write $x\otimes y=(x_jy_k)_{j,k=1}^2$ for $x,y\in\R^2$.  
Note that  $\eta(\eps)\tilde{A}$ is the It\^o--Stratonovich correction term for the $\sqrt{\eps}$-noise equation \eqref{eq:NS}, as follows from the derivation in \cite[(1.4), Rem.\ 2.2]{AV24SNS} and since 
$\nabla\widetilde{p}_n=(I-\pr)[(b_n\cdot\nabla)u]$, see \cite[p.\ 43]{AV24SNS}.

Next, let us argue why the conditions of Corollary \ref{cor:LDPcoerciveperturbed1} are satisfied.  By \cite[\S7.3.4]{AV25survey}, $(A_0,B_0,\hat{F})$ defined above satisfies their corresponding conditions of Assumption \ref{ass:critvarsettinglocal} except for \ref{ass:critvarsettinglocal}\ref{it:coercivelinear} (since we do not assume parabolicity involving $(b_n^j)$ as in \cite[Ass.\ 7.13]{AV25survey}). 
Moreover, $(A_0,0)$ clearly satisfies Assumption \ref{ass:critvarsettinglocal}\ref{it:coercivelinear} and from \cite[\S7.3.4]{AV25survey} it follows that $(A,0)$ is coercive, so in particular    \eqref{eq:def coercive delta} holds for any $\delta>0$ (recall that $G=0$).  
Furthermore, $\tilde{A}\in\mathscr{L}(\Hs_0^1(\O),\Hs^{-1}(\O))$, by the boundedness of $b$ from Assumption \ref{ass:NS}. 
Therefore, Corollary \ref{cor:LDPcoerciveperturbed1} yields the following result.

\begin{theorem}\label{th:NS} 
Let Assumption \ref{ass:NS} hold and let $u_0\in \Ls^2(\O)$. Then there exists an $\eps_0>0$ such that  
for all $\eps\in(0,\eps_0)$, 
\eqref{eq:NS} has a unique strong solution $u^\eps$, and 
the family $(u^\eps)_{\eps\in(0,\eps_0)}$ satisfies the LDP on $\MR(0,T)\ceqq C([0,T];\Ls^2(\O))\cap L^2(0,T;\Hs^1_0(\O))$ with rate function $I\col \MR(0,T)\to[0,\infty]$ given by 
\[
I(z)=\inf\{\|\psi\|_{L^2(0,T;\ell^2)}^2:\psi \in L^2(0,T;\ell^2), z=u^{\psi} \},
\]
where $\inf\varnothing\ceqq+\infty$ and for $\psi=(\psi^n)\in L^2(0,T;\ell^2)$,  $u^\psi\in  \MR(0,T)$ denotes the unique strong solution to 
\begin{equation*} 
\left\{
\begin{aligned}
&\dd u^{\psi} =\big[\nu \Delta u^{\psi} - \P\div(u^{\psi}\otimes u^{\psi})\big] \,\dd  t
+ \textstyle{\sum_{n\geq 1}} \big(\P[(b_{n}\cdot\nabla) u^{\psi}] +\P g_n(\cdot,u^{\psi})\big)\psi_n,\\ 
&u^{\psi}=0 \ \text{on $\partial \O$},\\ 
&u^{\psi}(0,\cdot)=u_0.
\end{aligned}\right.
\end{equation*} 
\end{theorem}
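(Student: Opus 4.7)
The plan is to apply Corollary \ref{cor:LDPcoerciveperturbed1} to the abstract It\^o reformulation of \eqref{eq:NS}, using the identifications of $(A_0, B_0, \hat{F}, G, \hat{f}, g, \tilde{A}, \eta)$ fixed just before the theorem. First I would convert the Stratonovich equation into the It\^o form of \eqref{eq:SPDE perturbed}. Using $\nabla \widetilde{p}_n = (I - \P)[(b_n \cdot \nabla) u]$ as noted in \cite[p.\ 43]{AV24SNS}, the Stratonovich integral in \eqref{eq:NS} equals the corresponding It\^o integral plus $\eps \tilde{A} u \, \dd t$, with $\tilde{A}$ as specified, following \cite[(1.4), Rem.\ 2.2]{AV24SNS}. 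This places \eqref{eq:NS} in the exact shape of \eqref{eq:SPDE perturbed} with $\eta(\eps) = \eps$, and the skeleton equation \eqref{eq:skeleton} associated to this abstract form is the one appearing in the theorem.

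Next I would verify the hypotheses of Corollary \ref{cor:LDPcoerciveperturbed1}. Assumption \ref{ass:critvarsettinglocal} without item \ref{it:coercivelinear} for $(A, B)$ is established in \cite[\S 7.3.4]{AV25survey}, where the extended critical framework is used to handle the bilinear term $\hat{F}(u) = -\P \div(u \otimes u)$ in two dimensions via a non-zero $\alpha_i$; the conditions for $G = g = \hat{f} = 0$ are trivial. The separate linear coercivity of $(A_0, 0)$ is immediate from $A_0 = -\nu \P \Delta$ and integration by parts:
\begin{equation*}
\langle A_0 u, u \rangle = \nu \|\nabla u\|_{L^2(\O;\R^{2 \times 2})}^2 \geq \nu \|u\|_V^2 - \nu \|u\|_H^2.
\end{equation*}
Since $G = 0$ and $\langle \hat{F}(u), u \rangle = 0$ on divergence-free vector fields by the standard antisymmetry of the convective term, condition \eqref{eq:def coercive delta} reduces to coercivity of $(A_0, 0)$ and holds for every $\delta > 0$. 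The semilinearity \eqref{eq:semilin} is clear from the definitions; the uniform bound $\sup_{t \in [0,T]} \|\tilde{A}(t)\|_{\mathscr{L}(V, V^*)} < \infty$ follows from the boundedness of $b$ in Assumption \ref{ass:NS} combined with $L^2$-continuity of $\P$. Finally, $\eta(\eps) = \eps \to 0$ as $\eps \downarrow 0$.

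With all hypotheses verified, Corollary \ref{cor:LDPcoerciveperturbed1} produces an $\eps_0 > 0$ such that \eqref{eq:NS} has a unique global strong solution $u^\eps$ for every $\eps \in (0, \eps_0)$, and $(u^\eps)_{\eps \in (0, \eps_0)}$ satisfies the LDP on $\MR(0,T) = C([0,T]; \Ls^2(\O)) \cap L^2(0, T; \Hs^1_0(\O))$ with rate function governed by the skeleton equation (in which the perturbation $\tilde{A}$ does not appear, by design of the corollary). The main subtlety will be justifying the precise form of the It\^o--Stratonovich correction $\tilde{A}$: the pressures $\nabla \widetilde{p}_n$ in \eqref{eq:NS} force a modification of the naive candidate $\tfrac{1}{2} \sum_n \P[(b_n \cdot \nabla) \P[(b_n \cdot \nabla) u]]$, and the extra term $-\tfrac{1}{2} \sum_{n \geq 1} \P \div(b_n \otimes (I - \P)[(b_n \cdot \nabla) u])$ captures the pressure contribution to the quadratic variation; once this identification is in place via \cite[Rem.\ 2.2]{AV24SNS}, the remainder is a mechanical check of the corollary's structural assumptions.
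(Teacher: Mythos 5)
Your proposal is correct and follows exactly the same route as the paper: rewrite the Stratonovich system in the It\^o form \eqref{eq:SPDE perturbed} with $\eta(\eps)=\eps$ and the given $\tilde{A}$ via \cite[(1.4), Rem.\ 2.2]{AV24SNS}, then apply Corollary \ref{cor:LDPcoerciveperturbed1} by invoking \cite[\S7.3.4]{AV25survey} for Assumption \ref{ass:critvarsettinglocal} minus \ref{it:coercivelinear}, observing the separate coercivity of $(A_0,0)$, using $G=0$ and $\langle\hat{F}(u),u\rangle=0$ (antisymmetry of the convective term on divergence-free fields) for \eqref{eq:def coercive delta}, and boundedness of $b$ for $\tilde{A}\in\mathscr{L}(V,V^*)$. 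The minor additional details you supply --- the explicit integration-by-parts bound for $A_0$ and the remark on $L^2$-boundedness of the Helmholtz projection --- are correct and consistent with what the paper compresses into references and with Remark \ref{rem:NS coer}.
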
 

\begin{remark}\label{rem:NS coer}
Although not needed for the LDP, we emphasize that 
  Assumption \ref{ass:NS} ensures that  \eqref{eq:NS} is  well-posed even for every $\eps\in\R$. In fact, the It\^o--Stratonovich correction term is  helpful: coercivity of $(A_0,0)$ yields coercivity of   $(A_0-\eps \tilde{A},\sqrt{\eps}B_0)$ and $(A-\eps \tilde{A},\sqrt{\eps}B)$ for every $\eps\in\R$, and global well-posedness follows from \cite[Th.\ 7.10]{AV25survey}. See also  \cite[(A.1), Ass.\ A.1, Prop.\ A.5]{AV24SNS}.  
 
For the claimed coercivity, observe that for all  $u\in \Hs_0^1(\O)$:
\[
-\<\eps\tilde{A}u,u\?=\tfrac{\eps}{2}\textstyle{\sum_{n\geq 1}}\|\P[(b_n\cdot\nabla) u ]\|_{\Ls^2(\O)}^2 =\tfrac12\|\sqrt{\eps}B_0u\|_{\mathscr{L}_2(\ell^2;\Ls^2(\O))}^2.  
\]
Thus we have a cancellation: 
\begin{equation}\label{eq:NSlincoer}
\<(A_0-\eps\tilde{A})u,u\?-\tfrac12\|\sqrt{\eps}B_0u\|_{\mathscr{L}_2(\ell^2;\Ls^2(\O))}^2=\<A_0u,u\?= 
{\nu}\|u\|_{\Hs^1_0(\O)}^2. 
\end{equation}    
Moreover, for $\hat{F}$ we have   
$
\<\hat{F}(u),u\?=-(u\otimes u,\div\, u)_{\Ls^2(\O;\R^{2\times 2})}=0  
$ for all   $u\in \Hs_0^1(\O)$  (since $u$ is divergence free and has zero trace, and $\P$ is self-adjoint). 
Adding this to \eqref{eq:NSlincoer}, we conclude that $(A-\eta(\eps)\tilde{A},\sqrt{\eps}B)$ is coercive too.  
\end{remark}

\printbibliography

\end{document}